\documentclass[a4paper, 11pt]{amsart}

\usepackage{graphicx}
\usepackage{amsthm, amssymb, amsmath}
\usepackage{hyperref}
\makeatletter
\newcommand{\finalremark}{%
  \par\addvspace{.5\linespacing}
  \refstepcounter{subsection}
  \noindent\textbf{\thesubsection.}~~\ignorespaces
}
\makeatother

\newtheorem{theorem}{Theorem}
\newtheorem{lemma}[theorem]{Lemma}
\newtheorem{corollary}[theorem]{Corollary}
\newtheorem{proposition}[theorem]{Proposition}

\usepackage[backend=biber,style=alphabetic, sorting=none]{biblatex}
\RequirePackage{doi}
\newcommand{\Hadamard}{\odot}
\newcommand{\KK}{\tilde{K}}
\newcommand{\XX}{\mathbb{X}}
\newcommand{\YY}{\mathbb{Y}}
\newcommand{\HH}{\mathcal{H}}
\newcommand{\EE}{\mathcal{E}}
\newcommand{\multi}[1]{{\bf{#1}}}
\newcommand{\iverson}[1]{\left[\text{``}#1\text{''}\right]}
\newcommand{\Par}[1]{\operatorname{Par}\left(#1\right)}
\newcommand{\sub}[1]{\operatorname{sub}\left(#1\right)}
\newcommand{\substar}[1]{\operatorname{sub}^\ast\left(#1\right)}
\newcommand{\norm}[1]{\|#1\|}
\newcommand{\norma}[1]{{#1}^\star}
\newcommand{\NZ}{\sigma_\ast}
\newcommand{\NZn}{(\NZ)^n}
\newcommand{\killer}{\mathcal{L}}
\newcommand{\symbf}{f}
\newcommand{\symbfdh}{\check{\symbf}}
\newcommand{\per}{\operatorname{per}}
\newcommand{\polyGessel}{\triangle}
\newcommand{\polysub}{\triangledown}
\newcommand{\polyfiber}{\square}
\newcommand{\polyfiberg}{\blacksquare}
\newcommand{\polyrook}{\blacktriangle}
\newcommand{\polysubg}{\blacktriangledown}
\newcommand{\setst}[2]{\left\lbrace #1 \;\Big|\;  #2 \right\rbrace}

\newcommand{\set}[1]{\{#1\}}
\newcommand{\vof}[1]{\widehat{#1}}
\newcommand{\wof}[1]{\widetilde{#1}}

\newcommand{\NN}{\mathbb{N}}
\newcommand{\RR}{\mathbb{R}}
\newcommand{\FA}{\mathbb{A}_m}
\newcommand{\RP}[1]{\mathcal{R}(#1)}

\newcommand{\rp}[2]{\operatorname{R}_{#1}(#2)}
\newcommand{\frp}[2]{\operatorname{R}^\ast_{#1}(#2)}
\newcommand{\scalar}[2]{\left\langle #1 \;\Big|\; #2 \right \rangle}

\usepackage{listings}
\usepackage{xcolor}

\definecolor{codegreen}{rgb}{0,0.6,0}
\definecolor{codegray}{rgb}{0.5,0.5,0.5}
\definecolor{codepurple}{rgb}{0.58,0,0.82}
\definecolor{backcolour}{rgb}{0.95,0.95,0.92}

\lstdefinestyle{mystyle}{
    backgroundcolor=\color{backcolour},   
    commentstyle=\color{codegreen},
    keywordstyle=\color{magenta},
    numberstyle=\tiny\color{codegray},
    stringstyle=\color{codepurple},
    basicstyle=\ttfamily\footnotesize,
    breakatwhitespace=false,         
    breaklines=true,                 
    captionpos=b,                    
    keepspaces=true,                 
    numbers=left,                    
    numbersep=5pt,                  
    showspaces=false,                
    showstringspaces=false,
    showtabs=false,                  
    tabsize=2
}

\begin{document}


\title[Counting partial Latin rectangles]{Counting partial Latin rectangles and tridimensional rook placements with multisymmetric functions}

\author{Emmanuel Briand} 
\address{Emmanuel Briand, Departamento Matemática Aplicada 1, Universidad de Sevilla}
\email{ebriand@us.es}

\thanks{This research was supported by Grant PID2020-117843GB-I00 funded by MCIN/AEI/10.13039/501100011033.}

\begin{abstract}
  We generalize Gessel's Formula for the number of Latin rectangles  to partial Latin rectangles and  non-attacking rook placements in a tridimensional chessboard. 
  We also derive explicit short formulas for the generating series of the numbers of  non-attacking rook placements on a chessboard with 2 or 3 levels. These series also count  partial Latin rectangles with $2$ or $3$ rows. 
  
  The results are obtained following methods developed by MacMahon and Gessel for counting Latin squares and Latin rectangles, by means of scalar products of multisymmetric functions.
\end{abstract}



\maketitle


\section{Introduction}

\subsection{Overview}

Let $m$, $n$ and $r$ be nonnegative integers.
A \emph{3-dimensional non-attacking rook placement} (for short: 3D rook placement) \emph{on a  $m\times n \times r$ chessboard} is a subset $S$ of $\{1,2,\ldots,m\}\times \{1,2,\ldots,n\} \times \{1,2,\ldots, r\}$ such that if $(i,j,k)$ and $(i', j',k')$ are two distinct elements of $S$, then $i\neq i'$, and $j\neq j'$ and $k\neq k'$.

This generalizes the (2-dimensional) non-attacking rook placements on rectangular boards, that have been studied in great detail (see \cite[ch. 2]{Stanley} and the references therein).
In the tridimensional generalization we consider, the rooks can move in any of the three directions parallel to the axes of coordinates. (Other generalizations have been considered in \cite{Other3DRooks}, where the rooks can move freely  in slices parallel to  the planes of coordinates. This is a totally different story).

\emph{Partial Latin rectangles} are avatars of the 3D rook placements.  
A \emph{partial Latin rectangle} (PLR) \emph{of format $m\times n$ with $r$ symbols} is an $m\times n$ array with some boxes filled with integers from $1$ to $r$, so that no number appears twice in the same row, and no number appears twice in the same column. A PLR $L$ corresponds to the 3D rook placement $S$ such that $(i,j,k)$ is in $S$ if and only if $L$ has an $i$ in position $(j,k)$.
Partial Latin rectangles are a recently introduced generalization of \emph{Latin  rectangles} (which are PLRs of format $n\times r$ with $r$ symbols and no empty cell), which themselves generalize \emph{Latin squares} (Latin rectangles of format $r \times r$). Enumerating Latin squares and Latin rectangles are very classical problems, on which there exists an extensive literature (see \cite{StonesThesis} for references). The problem of counting PLRs has been studied systematically by Stones and Falc\'on (\cite{FalconStones2015, FalconStones2020}).

Let $\RP{m,n,r}$ be the set of all rook placements on a $m\times n \times r$ chessboard. 
For all $d \ge 0$, let  $\RP{m,n,r|d}$ be the set of all rook placements with cardinality $d$ on a  $m\times n \times r$ chessboard. 
Let us define also
\[
\rp{m,n,r}{x} = \sum_d \# \RP{m,n,r|d} x^d. 
\]
This is the (ordinary) generating series for the rook placements on a chessboard of fixed format $m\times n \times r$, in function of their number of elements.
This generalizes to our 3D setting the \emph{rook polynomial} from the (2D) rook theory. We keep for it the name \emph{rook polynomial}.

The numbers $\RP{m,n,r}$, $\RP{m,n,r|d}$ and the polynomials $\rp{m,n,r}{x}$ are obviously invariant under permutations of $(m,n,r)$. Therefore, they count not only PLRs of format $m\times n$ with $r$ symbols but also PLRs of format $n\times r$ with $m$ symbols, etc...

The total number of rook placements $\# \RP{m,n,r}$ on the chessboard is obtained by specializing at $x=1$. When $m \le n$, the leading coefficient of the rook polynomial $\rp{m,n,n}{x}$ is the number of Latin rectangles with $m$ rows. Thus this rook polynomial  interpolates between the total number of rook placements  ($x=1$) on the $m\times n \times n$ chessboard, and the number of Latin rectangles with $m$ rows and $n$ symbols (obtained with $x\to \infty$, i.e. as the top coefficient, when $m \le n$).

As an explicit example, here are the rook polynomials for a $3 \times n \times r$ chessboard, with $n$ and $r$ from $0$ to $3$:

\[
\begin{array}{llll}
1 & 1 & 1 & 1\\
1 & 3 x + 1 & 6 x^{2} + 6 x + 1 &  6 x^{3} + 18 x^{2} + 9 x + 1 \\
1 & 6 x^{2} + 6 x + 1 & 18 x^{4} + 48 x^{3} + 42 x^{2} + 12 x + 1 &  \rp{3,2,3}{x}\\
1 & 6 x^{3} + 18 x^{2} + 9 x + 1 & \rp{3,3,2}{x} &  \rp{3,3,3}{x}
\end{array}
\]
where
\[
 \rp{3,3,2}{x}=\rp{3,2,3}{x} =  12 x^{6} + 108 x^{5} + 270 x^{4} + 264 x^{3} + 108 x^{2} + 18 x + 1,
 \]
 and
 \begin{multline*}
   \rp{3,3,3}{x}= 12 x^{9} + 108 x^{8} + 756 x^{7} + 2412 x^{6} + 3834 x^{5}\\
   + 3078 x^{4} + 1278 x^{3} + 270 x^{2} + 27 x + 1.
 \end{multline*}
 In particular, we read from $R_{3,3,3}(x)$ that there are 12 Latin squares of format $3 \times 3$ (coefficient of $x^{9})$, and that the total number of rook placements in a  $3\times 3 \times 3$ chessboard is $11776$  (evaluation at $x=1$).

 MacMahon provided two related methods to enumerate Latin rectangles.
 His first method \cite[Vol. I, sec. V, ch. III]{MacMahon:book} is based on extracting a coefficient from a power of the permanent.
 His second method \cite[Vol. II, sec. XI, ch. V]{MacMahon:book} is based on calculations of multisymmetric functions (a generalization of symmetric functions where each variable $x_i$ is a vector with $m$ components $x_{i,j}$).  In \cite{Gessel:enumerative}, Gessel updated and extended MacMahon's second method.

\subsection{Main results}

 In this work, we apply the tools developed by MacMahon and Gessel to enumerate partial Latin rectangles (and, at the same time, 3D Rook Placements) with multisymmetric functions. Our first result is the generalization of a formula, due to Gessel, for the number of Latin rectangles, to PLRs. This formula was proved in the language of graphs in  \cite{Gessel:Latin}. But in  the other paper \cite{Gessel:enumerative}, published the same year ,  Gessel shows the formula could also be derived in the framework of multisymmetric functions. We provide this derivation, as well as the derivation of our generalization to PLRs.
 Next, we consider  the following  exponential generating series for the rook polynomials attached to  a  $m\times n \times r$ chessboard, for fixed $m$:
\[
g_m(z,t,x)=\sum_{n, r} \rp{m,n,r}{x} \frac{z^n}{n!} \frac{t^r}{r!}
=
\sum_{n, r, d} \# \RP{m,n,r|d}  x^d \frac{z^n}{n!} \frac{t^r}{r!}.
\]
Our second result is an explicit, short expression for $g_2$, and for $g_3$.

\subsection{Structure of the paper}

The organization of the paper is the following: sections  \ref{sec:Gessel} and  \ref{sec:results} state our results on, respectively, the generalization of Gessel's Formula , and the generating series $g_m$. Section  \ref{sec:multisymmetric} introduces the multisymmetric functions, explains their use by MacMahon and Gessel, and presents the proofs of our results.

More precisely, in  Section \ref{sec:Gessel}, we revisit Gessel's Formula for Latin rectangles.
We also give a new presentation of the formula as a ``symbolic method''.
We explain how this symbolic method allows to derive  easily known formulas  for the numbers of Latin rectangles with $2$, $3$ or $4$ rows, found in \cite{BogartLongyear, Yamamoto49, Pranesachar23, Pranesachar:chromatic}.
Then, we state our generalization of  Gessel's Formula to 3D rook polynomials. We also generalize Gessel's Formula  to what we call ``relaxed Latin rectangles'', whose numbers are the values of the chromatic polynomial of the line graph of $K_{m,n}$, considered in \cite{Pranesachar:chromatic}.
Finally, we clarify the nature of the domains of summation of Gessel formula and its generalizations, by describing them as the sets of lattice points of  polytopes.

In Section \ref{sec:results}, we consider the series $g_m$ and state our formulas for $g_2$ and $g_3$. We have implemented these formulas in the  notebook \cite{Notebook} for SageMath \cite{Sagemath}, that the reader should find of interest. As an aside, we derive generating series for the values of the chromatic polynomials of the rook's graphs with 2 or 3 rows.

Section \ref{sec:multisymmetric} is devoted to the proofs of the previous results.   We first introduce the multisymmetric functions. We derive Gessel's Formula in the setting of multisymmetric functions.
Next, we explain the  modifications that lead to the analogue of Gessel's Formula for 3D rook polynomials, and the formulas for the generating series  $g_2$ and $g_3$ introduced in the previous sections.

There are other formulas for counting Latin rectangles, due to Doyle \cite{Doyle} (see also \cite{DeGennaro, StonesComputing}), but they do not generalize to PLRs (see remark \ref{rem:Doyle}). Yet, we found interesting to look for a derivation of Doyle's Formulas in the framework of multisymmetric functions. This derivation is presented in Appendix \ref{sec:Doyle}. It is not simpler than the original. Yet, it may interest the symmetric functions enthusiast, as analogues of classical operators on symmetric  functions show up.

\subsection{Relation with \cite{Briand:extendedabstract}}

This article expands widely the extended abstract \cite{Briand:extendedabstract}, where some of the results presented here were announced.
The present paper contains much new material, not included in the extended abstract: the whole Section \ref{sec:Gessel}
on Gessel's Formula  and its generalization;
the detailed calculations and proofs with multisymmetric  functions that appear in Section \ref{sec:multisymmetric}; and Appendix \ref{sec:Doyle} on  Doyle's Formula.



\section{Gessel's Formula  for counting Latin rectangles, and its generalization for 3D rook polynomials}\label{sec:Gessel}

In this  section, we review some  explicit formulas, from the literature, for the number $L_{m,n}$ of Latin rectangles with $m$ rows and $n$ columns, for fixed $m$, equal to $1$, $2$, $3$ and $4$. The general form (for all $m$) of these formulas was written down by Gessel, and we review also Gessel's Formula.  Then we generalize Gessel's Formula to rook polynomials $\rp{m,n,r}{x}$.

\subsection{Formulas for the numbers of Latin rectangles}

Let us review known formulas for $L_{m,n}$ for small $m$.

Of course, $L_{1,n}=n!$, and the $\frac{L_{2,n}}{n!}$ are the derangements numbers, whence
\[
L_{2,n} = (n!)^2  \sum_{k=0}^n \frac{(-1)^k}{k!}.
\]

For $m=3$, we have the following formula due to Bogart and Longyear \cite[Formula (11)]{BogartLongyear}:
\begin{equation}\label{eq:BogartLongyear}
L_{3,n}
=
  \sum_{a,i,j,k,s\atop \text{s.t.} a+i+j+k+s=n} (-1)^{i+j+k}  \frac{(a+i)! (a+j)! (a+k)!}{i! j! k!} \frac{n!^2}{a!^2} \frac{2^s}{s!}.
\end{equation}
Using the identity
\[
\sum_{i,j,k\atop i+j+k=d}
\binom{a+i}{i}
\binom{a+j}{j}
\binom{a+k}{k}
=
\binom{3a+d+2}{d},
\]
(that can be proved easily with ``dots and dividers''),
the formula further simplifies into
\[
L_{3,n}
=
n!^2
\sum_{a,d,s\atop a+d+s=n}
(-1)^{d}
\binom{3a +d+2}{d} a!  \frac{2^s}{s!}.
\]
This formula appears in  \cite[theorem 4.2]{Pranesachar:chromatic} and \cite{Pranesachar23} 
According to Stones \cite{StonesComputing}, the formula is  also
attributed by Riordan \cite{Riordan} to Yamamoto \cite{Yamamoto49}.

A formula of the same kind for $m=4$ can be found in \cite[Theorem 4.3]{Pranesachar:chromatic} by setting $r=n$ (for $r\neq n$, their formula is for ``relaxed Latin rectangles'', that are a little bit more general than Latin rectangles, see Section \ref{relaxed}). 
This is a large formula involving many factorials and binomial coefficients, that we will not reproduce it here. Let us just mention that 18 indices are involved.


\subsection{Gessel's Formula }

\subsubsection{Notations}

In  order to present Gessel's Formula, we need to introduce some notations.

Given  a finite set $X$,
a \emph{multiset of elements of $X$} is a family $\nu$ of nonnegative integers $(\nu_x)_{x \in X}$ indexed by $X$. The set of all multisets of elements of $X$ is $\NN^X$. 
When $\sum_{x \in  X} \nu_x = n$, we say that this multiset is a \emph{$n$-multiset}, and that $n$ is the \emph{size} of $\nu$. Then, we  write $|\nu|$ for $n$ . Given such a $n$-multiset, we set
\[
\binom{n}{\nu} = \frac{n!}{\displaystyle \prod_{x\in X} \nu_x!}
\]
(multinomial coefficient).

Let $[m] = \{1,2,\ldots, m\}$.
Let $\sub{m}$ be the set of all subsets of $[m]$, and $\substar{m}$ be the set of the \emph{non-empty} subsets of $[m]$.

For any subset $A$ of $[m]$, we note $\tau_A = (\#A-1)!$.
For any multiset $\phi$ of non-empty subsets of $[m]$, we note $\norm{\phi}=\sum_{A \subset [m]} \phi_A \cdot \#A$.

Let $\Par{m}$ be the set of all set partitions of $[m]$.
The notation ``$\pi \vDash X$'' means ``$\pi$ is a set partition of $X$''.

\subsubsection{Gessel's Formula}

Any multiset $\nu$ of set partitions of $[m]$ induces a multiset $\vof{\nu}$ of non-empty subsets of $[m]$, defined by:
\[
\vof{\nu}_A = \sum_{\pi \vDash [m]\atop \text{s.t. } A\in \pi} \nu_\pi.
\]

\begin{theorem}[\cite{Gessel:Latin}]\label{summation Gessel}
  \[
  L_{m,n}
  = (-1)^{mn} \sum_{(\nu, \mu) \in \polyGessel_m(n)} (-1)^{|{\vof{\nu}}|} \binom{n}{\nu} \binom{n}{\mu} \prod_{ A \subset [m]\atop A\neq \emptyset} (\tau_A)^{\vof{\nu}_A} \cdot (\vof{\nu}_A)! 
  \]
  where
  \[
  \polyGessel_m(n)=
  \setst{
  (\nu, \mu) \in \NN^{\Par{m}} \times \NN^{\Par{m}}
    }{
    |\mu|=|\nu|=n \text{ and }  \vof{\nu}=\vof{\mu}
  }
  .
  \]
    \end{theorem}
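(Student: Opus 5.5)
We would prove the formula by writing $L_{m,n}$ as a coefficient of a product of multisymmetric power sums, following MacMahon and Gessel. Encode a Latin rectangle with $m$ rows, $n$ columns and $n$ symbols as its sequence of $n$ columns. Each column is an $m$-tuple $(k_1,\dots,k_m)$ of \emph{pairwise distinct} symbols, and the condition on the rows is that every symbol occurs exactly once in each row. Introduce vector variables $x_k=(x_{k,1},\dots,x_{k,m})$, $k=1,\dots,n$, and attach to a column the monomial $x_{k_1,1}x_{k_2,2}\cdots x_{k_m,m}$. Let $C$ be the sum of these monomials over all columns with distinct entries. Then $L_{m,n}$ is the coefficient of $\prod_{k=1}^n\prod_{i=1}^m x_{k,i}$ in $C^n$.

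\textbf{Step 1: expand $C$ by Möbius inversion on set partitions.} The sum over injective tuples equals
\[
\sum_{\pi\vDash[m]} \mu(\hat 0,\pi)\sum_{\text{tuples constant on the blocks of }\pi} = \sum_{\pi\vDash[m]}\ \prod_{A\in\pi}(-1)^{\#A-1}\tau_A\, p_A,
\]
where $p_A=\sum_k\prod_{i\in A}x_{k,i}$. Since $\sum_{A\in\pi}(\#A-1)=m-\#\pi$, each $\pi$ carries the sign $(-1)^{m-\#\pi}$.

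\textbf{Step 2: raise to the $n$-th power.} The multinomial theorem, with $\nu\in\NN^{\Par{m}}$ and $|\nu|=n$, gives
\[
C^n=\sum_{\nu}\binom{n}{\nu}(-1)^{mn-|\vof\nu|}\prod_A \tau_A^{\vof\nu_A}\,p_A^{\vof\nu_A}.
\]
Here we use $\sum_\pi\nu_\pi\#\pi=|\vof\nu|$. We also use $(-1)^{-|\vof\nu|}=(-1)^{|\vof\nu|}$, which produces the global sign $(-1)^{mn}(-1)^{|\vof\nu|}$.

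\textbf{Step 3: extract the coefficient.} We need the coefficient of the squarefree monomial $\prod_{k,i}x_{k,i}$ in $\prod_A p_A^{a_A}$, where $a=\vof\nu$. Expanding each $p_A^{a_A}$ as a sum over ordered $a_A$-tuples of symbols, a contributing term assigns to each symbol $k$ a family of sets $A$ that must partition $[m]$. So the terms correspond to choosing, for every symbol $k$, a set partition of $[m]$. Grouping by the multiset $\mu\in\NN^{\Par{m}}$ of chosen partitions (so $|\mu|=n$), there are $\binom{n}{\mu}$ ways to assign partitions to symbols. The chosen partitions must use the set $A$ exactly $a_A$ times, which is the condition $\vof\mu=a=\vof\nu$. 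The $a_A$ symbols using $A$ are distinct, and the ordered tuples realizing this set number $a_A!$. Hence the coefficient is $\sum_{\mu:\,\vof\mu=\vof\nu}\binom{n}{\mu}\prod_A(\vof\nu_A)!$.

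Substituting this into Step 2 gives exactly the sum over $\polyGessel_m(n)$ in the statement. In the language of the paper, this extraction is a scalar product of multisymmetric functions. The main obstacle is the bookkeeping in Step 3: checking that each squarefree term is counted exactly $\prod_A(\vof\nu_A)!$ times. This holds because distinct copies of $p_A$ must receive distinct symbols, since each $x_{k,i}$ appears only once. One must also check that no other monomial configurations contribute. The sign computation in Steps 1–2 is routine once $\sum_{A\in\pi}(\#A-1)=m-\#\pi$ is noted.
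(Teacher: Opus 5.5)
Your proof is correct, and it takes a more elementary route than the paper. The paper starts from $L_{m,n}=\scalar{e([m])^n}{h([m])^n}$, which is MacMahon's observation combined with Lemma~\ref{extraction}. It then imports Rosas's power-sum expansions of \emph{both} $e([m])$ and $h([m])$, raises each to the $n$-th power, and reads off the result from the orthogonality of power-sum monomials. In that route the sum over $\mu$ comes from expanding $h([m])^n$. The factor $\prod_A \tau_A^{\vof{\nu}_A}(\vof{\nu}_A)!$ is the net effect of the two normalisations $\norma{A}=\tau_A A$ and the norm $\prod_A \tau_A^{-k_A} k_A!$.

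You proceed differently in three ways:
\begin{itemize}
\item You work with only $n$ vector variables.
\item You derive the power-sum expansion of the column sum yourself, by M\"obius inversion on the partition lattice. This is the finite-alphabet content of Rosas's formula for $e$, and your signs agree with it.
\item You replace the whole package ``scalar product with $h([m])^n$, expansion of $h$, orthogonality'' by a direct count of the squarefree terms of $\prod_A p_A^{a_A}$.
\end{itemize}
In that count, your $\mu$ records, for each symbol, the set partition of $[m]$ formed by the power-sum factors landing on it. Your $a_A!$ counts the matchings between the copies of $p_A$ and the symbols. In effect you prove by hand that $\scalar{\prod_A p_A^{a_A}}{h([m])^n}=\prod_A a_A!\sum_{\vof{\mu}=a,\,|\mu|=n}\binom{n}{\mu}$, and your bookkeeping for this is sound.

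What each approach buys:
\begin{itemize}
\item Your version is self-contained. It needs no Hall-type scalar product and no appeal to Rosas. It also avoids the paper's remark that $e([m])$ contains extra monomials with $i>n$ that do not affect the extracted coefficient.
\item The paper's version makes the symmetric roles of $\nu$ and $\mu$ visible, since both come from the same kind of expansion.
\item The paper's version transfers with no new counting to $\rp{m,n,r}{x}=\scalar{\EE_m^n}{\HH_m^r}$ and to the generating series $g_m$. There the scalar-product calculus with exponentials and separation of variables does the real work. Your method would require redoing the combinatorial count, with partial set partitions, for each variant.
\end{itemize}

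A minor point: you use $\mu$ both for the M\"obius function $\mu(\hat 0,\pi)$ and for the multiset of partitions. Rename one of them.
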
  
Gessel proved this formula in the language of graphs in \cite{Gessel:Latin}, but mentioned in \cite{Gessel:enumerative} the possibility of proving it with  multisymmetric functions. We will present this proof with multisymmetric functions in Section \ref{sec: proof Gessel}.


In Gessel's Formula , let us take the $\vof{\nu}$ as the indices of the main summation, instead of the pairs $(\nu, \mu)$. 
We observe that a necessary condition on  $\phi \in \RR^{\substar{m}}$ to be  of the form $\phi=\vof{\nu}$ for some  $n$-multiset  $\nu$ of set partitions is
\[
\forall x \in [m],
\quad
\sum_{A:\atop x\in A} \phi_A = n.
\]
We can state:
\begin{equation}\label{eq:Gessel2}
L_{m, n}
=
(-1)^{mn}
\sum_{\phi \in \polysub_m(n)}
(-1)^{|\phi|}
\prod_{ A \subset [m]\atop A\neq \emptyset} (\tau_A)^{\phi_A} \cdot (\phi_A)! \cdot 
\left(
\sum_{\nu \in \polyfiber_m(\phi)}
\binom{n}{\nu}
\right)^2,
\end{equation}
where
\[
\polysub_m(n)
=
\setst{
\phi \in \NN^{\substar{m}}
}{
\forall x \in [m], \sum_{A: x\in A} \phi_A = n
}
,
  \]
  and
\[
\polyfiber_m(\phi)
=
\setst{
\nu \in \NN^{\Par{m}}
}{
\vof{\nu} = \Phi
}
.
  \]
  

\subsubsection{Gessel's Formula as a symbolic method}

Gessel's Formula affords yet another restatement, as a ``symbolic method''. 
The calculations that follow take place in the free commutative algebra $\FA$ generated by the non-empty subset of $[m]$.

\begin{proposition}
  The following procedure gives the number $L_{m,n}$ of Latin rectangles of format $m\times n$: 
  \begin{enumerate}
\item Let
\[
\symbf_m= \sum_{\pi \vDash [m]} \prod_{A \in \pi} A.
\]    
\item Expand $(\symbf_m)^n$ in the basis of monomials.
\item Square each coefficient.
\item Rescale each variable $A$ by multiplying it by $- (-1)^{\#A} \cdot \tau_A $.  
\item For each subset $A$, apply the linear form replacing each $A^k$ with $k!$.
\end{enumerate}
\end{proposition}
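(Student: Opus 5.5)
The plan is to derive the procedure directly from the reformulation \eqref{eq:Gessel2} of Gessel's Formula (Theorem~\ref{summation Gessel}). The work is to identify what each step of the procedure computes and check that the final number coincides term by term with the right-hand side of \eqref{eq:Gessel2}.

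First I will expand $(\symbf_m)^n$ by the multinomial theorem, treating the $\#\Par{m}$ summands $\prod_{A\in\pi}A$ as the terms. Each multiset $\nu\in\NN^{\Par{m}}$ with $|\nu|=n$ contributes
\[
\binom{n}{\nu}\prod_{\pi}\Bigl(\prod_{A\in\pi}A\Bigr)^{\nu_\pi}=\binom{n}{\nu}\prod_{A\neq\emptyset}A^{\vof{\nu}_A}.
\]
Grouping by the exponent vector $\phi=\vof{\nu}$, the coefficient of the monomial $\prod_A A^{\phi_A}$ is $\sum_{\nu\in\polyfiber_m(\phi)}\binom{n}{\nu}$. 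The constraint $|\nu|=n$ is automatic whenever $\phi\in\polysub_m(n)$: for any fixed $x\in[m]$, every partition contains exactly one block containing $x$, so $\sum_{A\ni x}\vof{\nu}_A=|\nu|$. Hence the monomials that actually occur are indexed by $\phi\in\polysub_m(n)$. Step 3 then produces $\bigl(\sum_{\nu\in\polyfiber_m(\phi)}\binom{n}{\nu}\bigr)^2$ as the coefficient of that monomial.

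Next I will track the sign and weight factors. The rescaling in step 4 multiplies the monomial $\prod_A A^{\phi_A}$ by
\[
\prod_A\bigl(-(-1)^{\#A}\tau_A\bigr)^{\phi_A}=(-1)^{|\phi|}(-1)^{\norm{\phi}}\prod_A\tau_A^{\phi_A}.
\]
Here $\norm{\phi}=\sum_A\phi_A\#A=\sum_{x\in[m]}\sum_{A\ni x}\phi_A=mn$, again by the defining condition of $\polysub_m(n)$. So the factor equals $(-1)^{mn}(-1)^{|\phi|}\prod_A\tau_A^{\phi_A}$, which reproduces both the global sign $(-1)^{mn}$ and the sign $(-1)^{|\phi|}$ in \eqref{eq:Gessel2}. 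Step 5 is a linear functional that sends the monomial $\prod_A A^{\phi_A}$ to $\prod_A(\phi_A)!$, since it is applied to each variable separately and these factorial evaluations multiply across variables. Summing over all monomials then gives exactly the right-hand side of \eqref{eq:Gessel2}, which equals $L_{m,n}$.

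The only step needing care is the double-counting identity $\sum_{A\ni x}\vof{\nu}_A=|\nu|$ and its consequence $\norm{\phi}=mn$. These are what allow the $\nu$-dependent sign in Theorem~\ref{summation Gessel} to be rewritten purely in terms of $\phi$. I will also check that \eqref{eq:Gessel2} really is a regrouping of Theorem~\ref{summation Gessel}: pairs $(\nu,\mu)$ with $\vof{\nu}=\vof{\mu}=\phi$ factor as a product of two equal sums, and that product is the square. Beyond this, I expect no real obstacle; the proof is bookkeeping.
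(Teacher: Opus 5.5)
Your argument is correct and is essentially the paper's own: the paper presents the procedure as a direct restatement of Gessel's Formula in its regrouped form \eqref{eq:Gessel2}. Your bookkeeping is exactly what that restatement amounts to: the coefficients of $(\symbf_m)^n$ are the fiber sums, squaring gives the sum over pairs $(\nu,\mu)$, and the factor $-(-1)^{\#A}\tau_A$ yields $(-1)^{mn}(-1)^{|\phi|}\prod_A\tau_A^{\phi_A}$ because $\norm{\phi}=mn$.

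One small wording point: the sign $(-1)^{|\vof{\nu}|}$ in Theorem~\ref{summation Gessel} already depends only on $\phi$. The real role of $\norm{\phi}=mn$ is to distribute the global sign $(-1)^{mn}$ over the individual variables.
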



As an example, let us apply the recipe for $m=2$.
Remember that the $L_{2,n}/n!$ are  the derangement  numbers.
We should recover a formula for them.

\begin{enumerate}
\item We have $\symbf_2=\set{1}\set{2}+\set{12}$.  
\item The expansion of $(\symbf_2)^n$ is:
  \[
\sum_{i+j=n}  \binom{n}{i}\set{1}^i\set{2}^i \set{12}^j.
\]
\item After squaring the coefficients, we get:
\[
\sum_{i+j=n} \binom{n}{i}^2 \set{1}^i\set{2}^i \set{12}^j.
\]
\item Here, rescaling just consists in changing $\{12\}$ into $-\{12\}$, giving:
  \[
\sum_{i+j=n} (-1)^j \binom{n}{i}^2 \set{1}^i\set{2}^i \set{12}^j.
\]
\item After applying the functionals $A^k \mapsto k!$, we get
  \[
L_{2,n} = \sum_{i+j=n} (-1)^j \binom{n}{i}^2 (i!)^2 j!.
\]    
\end{enumerate}  
After some trivial simplifications of factorials, we recover:
\[
\frac{L_{2,n}}{n!}
=     \sum_{i=0}^n (-1)^{n-i} (n)_i
= n!  \sum_{j=0}^n \frac{(-1)^j}{j!}.
\]

Applying the recipe for $m=3$ gives  the Bogart-Longyear Formula \eqref{eq:BogartLongyear}. For $m=4$, this yields   the formula in \cite[Theorem 4.2]{Pranesachar:chromatic} specialized with $k=n$ (for $k\neq n$, some ``relaxed Latin rectangles'' are  counted instead, see Section \ref{relaxed}).

\subsection{Generalization to 3D Rook polynomials}

We present here a generalization of Gessel's Formula, giving the  3D rook polynomials.

Given any multiset $\nu$ of set partitions of $[m+1]$, let $\wof{\nu}$ be the restriction of $\vof{\nu}$ to the non-empty subsets of $[m]$. We will deal below with the condition $\wof{\nu} = \wof{\mu}$: note that this is different from the condition $\vof{\nu}=\vof{\mu}$ of Gessel's Formula.  

\begin{theorem}\label{sum rp phi}
  The 3D rook polynomials are given by:
  \[
  \rp{m,n,r}{x}
  = (-1)^n \sum_{(\nu, \mu)\in \polyrook_{m+1}(n, r)} \binom{n}{\nu} \binom{r}{\mu} (-x)^{\|\wof{\nu}\|} \prod_{ A \subset [m]\atop A\neq \emptyset} (\tau_A)^{\wof{\nu}_A} \cdot (\wof{\nu}_A)! 
  \]
  where
  \[
  \polyrook_{m+1}(n,r)
  =
  \setst{
    (\nu, \mu) \in \NN^{\Par{m}} \times \NN^{\Par{m}}
  }{
    |\nu|=n
    \text{ and }
    |\mu|=r
    \text{ and }
    \wof{\nu} =  \wof{\mu}
  }
  .
  \]
\end{theorem}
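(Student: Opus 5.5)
The plan is to follow the multisymmetric-function proof of Theorem~\ref{summation Gessel} (Section~\ref{sec: proof Gessel}), adding one extra "level" $m+1$ that plays the role of an empty cell. Throughout I read $\polyrook_{m+1}(n,r)$ as a subset of $\NN^{\Par{m+1}}\times\NN^{\Par{m+1}}$, which is the intended meaning since $\wof{\nu}$ is defined for multisets of partitions of $[m+1]$.

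View a 3D rook placement as a PLR of format $n\times r$ with symbols in $[m]$.
\begin{enumerate}
\item \emph{Rows as monomials.} Row $j$ is a partial injection $[m]\rightharpoonup[r]$ (symbol $i$ sits in column $k$). Encode it by the monomial $\prod_{k} x_{k,c(k)}$ in $r$ vector variables $x_k=(x_{k,1},\dots,x_{k,m+1})$. Here $c(k)\in[m]$ is the symbol in cell $(j,k)$, and $c(k)=m+1$ if that cell is empty. To make the rows and columns interact correctly, I would instead record the \emph{set} of symbols sitting in column $k$ across all rows. This means working with exponent vectors in $\NN^{m+1}$ and weighting column $k$ by $x_k^{a_k}$, where $a_k\in\{0,1\}^m$ records which symbols occur in column $k$. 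The column condition (no symbol twice in a column) is then a squarefree condition on each $x_k$, and it is detected by pairing with the multisymmetric function $\prod_k(\text{sum of squarefree monomials in }x_k)$ under the Hall-type scalar product used for Gessel's formula.
\item \emph{The two factors.} Write $\rp{m,n,r}{x}$ as a scalar product $\langle F_n, G_r\rangle$. The factor $F_n$ is the product over the $n$ rows of the "row" multisymmetric functions, with $x$ marking each non-empty cell. The factor $G_r$ imposes the column condition and is symmetric under permuting the $r$ columns. A row that places symbols $B\subseteq[m]$ in distinct columns is an injection, so it expands into the augmented monomial multisymmetric functions. Passing to power sums $p_\pi$, indexed by set partitions $\pi$ of $[m+1]$, uses Möbius inversion on the partition lattice, which produces the factors $\mu(\hat0,\pi)=\prod_{A\in\pi}(-1)^{\#A-1}(\#A-1)!$. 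These are the origin of $\tau_A$ and of the signs.
\item \emph{Expansion.} Raising the row function to the $n$-th power and expanding multinomially gives $\sum_{|\nu|=n}\binom{n}{\nu}$. Doing the same for the column side with $r$ gives $\sum_{|\mu|=r}\binom{r}{\mu}$. The scalar product of power-sum products is diagonal in the multiset of blocks, namely $\langle \prod p_A^{\phi_A},\prod p_A^{\psi_A}\rangle=\delta_{\phi\psi}\prod \phi_A!\cdots$. This yields the factor $\prod(\wof{\nu}_A)!$ and the matching condition.
\end{enumerate}

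The key point, and the step I expect to be the main obstacle, is why only $\wof{\nu}=\wof{\mu}$ (blocks inside $[m]$) must match, rather than all of $\vof{\nu}$. The empty symbol $m+1$ can repeat freely, so the variables carrying the component $m+1$ should be specialized (set to $1$) before pairing. Then every block $A\ni m+1$ contributes a trivial factor. As a result, only the blocks $A\subseteq[m]$ survive in the scalar product, each weighted by $(-1)^{\#A-1}\tau_A\,x^{\#A}$, where the $x^{\#A}$ records that block $A$ accounts for $\#A$ rooks. Collecting these weights gives $(-x)^{\|\wof{\nu}\|}$ up to the global sign $(-1)^n$, which I would fix by checking the case $m=1$. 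In that case the formula must reduce to the 2D rook polynomial $\sum_d \binom{n}{d}\binom{r}{d}d!\,x^d$.

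I would first verify this specialization carefully on $m=1$ and $m=2$ against the table in the introduction, in particular $\rp{3,2,3}{x}$. After that I would write the general Möbius computation.
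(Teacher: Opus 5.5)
Your overall architecture matches the paper's. You write $\rp{m,n,r}{x}$ as a scalar product of an $n$-th power of a ``row'' function with an $r$-th power of an extraction function, expand both in power sums via Möbius inversion on the partition lattice (Rosas' formulas), expand multinomially, and use orthogonality. You are also right that $\polyrook_{m+1}(n,r)$ should live in $\NN^{\Par{m+1}}\times\NN^{\Par{m+1}}$. However, the step you flag as the main obstacle relies on a mechanism that does not work:
\begin{itemize}
\item In your encoding the ``empty symbol'' $m+1$ is attached to each empty cell, i.e.\ to a vector variable $x_k$. So a row using symbol set $A$ carries $r-\#A$ copies of component $m+1$.
\item That row function lives in exactly $r$ vector variables, with degree one in each. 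It is not an element of the ring on which the scalar product and Lemma~\ref{extraction} operate.
\item Its power-sum expansion would run over partitions of a multiset with repeated $m+1$'s, not over $\Par{m+1}$.
\item Specializing $x_{k,m+1}=1$ ``before pairing'' does not fix this. It is not compatible with the scalar product, and it sends $p_{\chi(B)+c\chi_{m+1}}$ ($c\ge 1$) to $p_{\chi(B)}$. So blocks containing $m+1$ do \emph{not} become trivial factors; they merge with blocks inside $[m]$. Power sums supported on component $m+1$ alone become $\sum_i 1$.
\item ``$\prod_k$(sum of squarefree monomials in $x_k$)'' is not multisymmetric. What detects the column condition, by Lemma~\ref{extraction}, is $(\HH_m)^r$ with $\HH_m=\sum_{A\subseteq[m]}x^{\#A}h(A)$.
\end{itemize}

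The missing idea is that no extra component is needed. Work $m$-fold: empty cells contribute nothing to a monomial, so the row function is $\EE_m=\sum_{A\subseteq[m]}e(A)$, and $\rp{m,n,r}{x}=\scalar{(\EE_m)^n}{(\HH_m)^r}$. Rosas gives
\begin{itemize}
\item $e(A)=\sum_{\rho\vDash A}\prod_{B\in\rho}(-1)^{\#B-1}\tau_B B$, and
\item $h(A)=\sum_{\rho\vDash A}\prod_{B\in\rho}\tau_B B$.
\end{itemize}
The pairs $(A,\rho)$ with $A\subseteq[m]$ and $\rho\vDash A$ are in bijection with the set partitions of $[m+1]$: adjoin the block $\{m+1\}\cup([m]\setminus A)$. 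That block only collects the unused component indices (in your orientation, the symbols absent from the row, not the empty cells). It carries no power sum at all. This is exactly why, after expanding the powers, only the restrictions $\wof{\nu}$ and $\wof{\mu}$ to blocks inside $[m]$ must match.

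Separately, there is a sign problem. Your per-block weight $(-1)^{\#B-1}\tau_B x^{\#B}$ is right, but the product is $(-1)^{|\wof{\nu}|}(-x)^{\norm{\wof{\nu}}}$. That sign depends on $\nu$; it is not ``$(-x)^{\norm{\wof{\nu}}}$ up to a global $(-1)^n$''. Your own $m=1$ check would show that no global sign works: for $m=n=r=1$ the displayed formula gives $x-1$ instead of $\rp{1,1,1}{x}=1+x$. The prefactor $(-1)^n$ in the statement should be read as $(-1)^{|\wof{\nu}|}$ inside the sum. That is what the expansion actually yields, and it agrees with the paper's symbolic method (rescaling $A\mapsto -(-x)^{\#A}\tau_A A$) and its $m=2$ example.
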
  

The proof will be given in Section \ref{sec: proof Gessel rook}.

Equivalently, using the multisets $\wof{\nu}$ as indices, we get that $\rp{m,n,r}{x}$ is 
\[
(-1)^n \sum_{\phi \in \polysubg_m(n,r)}
    (-x)^{\|\phi\|}
    \left( \sum_{\nu \in \polyfiberg_m(n, \phi)} \binom{n}{\nu}     \right)
    \left( \sum_{\mu \in \polyfiberg_m(r, \phi)} \binom{r}{\mu}     \right)
    \prod_{ A \subset [m]\atop A\neq \emptyset} \tau_A^{\phi_A} \cdot (\phi_A)!
    \]
    where
    \[
    \polysubg_m(n, r)
    =
    \setst{
      \phi \in \NN^{\substar{m}}
    }{
      \forall x \in [m],    \sum_{A \subset [m]:\atop x \in A} \phi_A \le \min(n, r),  
    }
    \]
    and
    \[
    \polyfiberg_{m}(n, \phi)
    =
    \setst{ \nu \in \NN^{\Par{m+1}}
    }{\wof{\nu}=\phi \text{ and } |\nu| = n }
    .
    \]

    Yet another restatement is as a symbolic method.
  \begin{proposition}
    The following procedure produces the rook polynomial $\rp{m,n,r}{x}$:
    \begin{enumerate}      
\item consider
\[
\symbfdh_m = \sum_{\pi \vDash [m+1]} \prod_{A \in \pi\atop \text{s.t. } A \subset [m]} A.
\]
(Note that $\symbfdh_m$ is a dehomogenization of $\symbf_{m+1}$, obtained by substituting all $A$ containing $m+1$ with $1$).
\item Expand $(\symbfdh_m)^n$ and $(\symbfdh_m)^r$ in the basis of monomials.
\item Replace in $(\symbfdh_m)^n$ each coefficient with its product with the corresponding coefficient in $(\symbfdh_m)^r$.
\item Rescale each variable $A$ by multiplying it with $-(-x)^{\#A} \tau_A$.  
\item Apply the linear functionals $A^k \mapsto k!$.
\end{enumerate}
\end{proposition}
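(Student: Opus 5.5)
This proposition is a direct translation of Theorem~\ref{sum rp phi}, so we only have to check that the five steps reproduce each factor of the summand, monomial by monomial.

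\textbf{Steps 1--2.} We first expand $(\symbfdh_m)^n$ by the multinomial theorem. Write $\symbfdh_m=\sum_{\pi\vDash[m+1]} M_\pi$ with $M_\pi=\prod_{A\in\pi,\,A\subset[m]}A$. Then
\[
(\symbfdh_m)^n=\sum_{\nu\in\NN^{\Par{m+1}},\ |\nu|=n}\binom{n}{\nu}\prod_{\pi}M_\pi^{\nu_\pi}.
\]
The monomial $\prod_\pi M_\pi^{\nu_\pi}$ equals $\prod_{\emptyset\neq A\subset[m]}A^{\wof{\nu}_A}$. Indeed, the exponent of $A$ counts, with multiplicity $\nu_\pi$, the partitions $\pi$ of $[m+1]$ having $A$ as a block, and this count is $\vof{\nu}_A=\wof{\nu}_A$ because $A\subset[m]$. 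Grouping terms, the coefficient of $\prod_A A^{\phi_A}$ in $(\symbfdh_m)^n$ is $\sum_{\nu\in\polyfiberg_m(n,\phi)}\binom{n}{\nu}$. The same holds for $(\symbfdh_m)^r$ with $r$ in place of $n$.

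\textbf{Step 3.} Taking the coefficientwise product gives
\[
\sum_{\phi}\Bigl(\sum_{\nu\in\polyfiberg_m(n,\phi)}\binom{n}{\nu}\Bigr)\Bigl(\sum_{\mu\in\polyfiberg_m(r,\phi)}\binom{r}{\mu}\Bigr)\prod_A A^{\phi_A}.
\]
Equivalently, this is a sum over pairs $(\nu,\mu)$ with $|\nu|=n$, $|\mu|=r$ and $\wof{\nu}=\wof{\mu}$, weighted by $\binom{n}{\nu}\binom{r}{\mu}$; this is exactly the index set $\polyrook_{m+1}(n,r)$. We read the $\nu,\mu$ in its definition as multisets of partitions of $[m+1]$, as the definition of $\wof{\cdot}$ requires.

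\textbf{Steps 4--5.} Rescaling $A\mapsto -(-x)^{\#A}\tau_A A$ and then applying $A^k\mapsto k!$ sends $\prod_A A^{\phi_A}$ to
\[
\prod_A(-1)^{\phi_A}(-x)^{\#A\,\phi_A}\tau_A^{\phi_A}\,\phi_A!=(-1)^{|\phi|}(-x)^{\|\phi\|}\prod_A\tau_A^{\phi_A}\phi_A!.
\]
The procedure therefore outputs
\[
\sum_{(\nu,\mu)\in\polyrook_{m+1}(n,r)}(-1)^{|\wof{\nu}|}\binom{n}{\nu}\binom{r}{\mu}(-x)^{\|\wof{\nu}\|}\prod_A\tau_A^{\wof{\nu}_A}(\wof{\nu}_A)!.
\]

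\textbf{Main obstacle: the sign.} This differs from Theorem~\ref{sum rp phi} only in the sign: we have $(-1)^{|\wof{\nu}|}$ where the theorem has the global factor $(-1)^n$. We must reconcile these, and I expect this to be the delicate point. For a single partition $\pi$ of $[m+1]$, the number of its blocks inside $[m]$ is $\#\pi-1$, since exactly one block contains $m+1$. Hence
\[
|\wof{\nu}|=\sum_\pi\nu_\pi(\#\pi-1),
\]
and this is not congruent to $n$ modulo $2$ in general. So the sign bookkeeping has to be redone honestly: either there is a misprint in one of the sign conventions (the rescaling factor $-(-x)^{\#A}$ versus the theorem's $(-1)^n(-x)^{\|\cdot\|}$), or the parity of the relevant exponent is forced by some relation on $\polyrook_{m+1}(n,r)$.

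I would check this against the case $m=1$, $n=r=1$. Here $\symbfdh_1=\{1\}+1$, coming from the partitions $\{1\}\{2\}$ and $\{12\}$, and $\rp{1,1,1}{x}=1+x$. The procedure gives the coefficients $1$ (for $\{1\}^0$) and $1\cdot1$ (for $\{1\}^1$). The latter is rescaled by $-(-x)=x$, so the output is $1+x$, which is correct. Since the procedure is correct here and is the primary statement, I would then fix the sign conventions so that the proposition agrees with the proof of Theorem~\ref{sum rp phi} in Section~\ref{sec: proof Gessel rook}, treating the proposition as a repackaging of that proof's final formula. Everything else in the argument is the routine multinomial bookkeeping above.
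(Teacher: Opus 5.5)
Your multinomial bookkeeping in Steps 1--5 is right, and so is your diagnosis: the procedure outputs the sum over $\polyrook_{m+1}(n,r)$ with sign $(-1)^{|\wof{\nu}|}$, not the global $(-1)^n$ printed in Theorem~\ref{sum rp phi}. The gap is that you reduce the proposition to that printed statement and then leave the sign to be ``fixed''. The sign is the only nontrivial content of the proposition, so nothing is actually proved.

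Neither of your two escape routes closes the gap. No relation on $\polyrook_{m+1}(n,r)$ forces the parity: in your own example $m=1$, $n=r=1$, both pairs $(\set{1}\set{2},\set{1}\set{2})$ and $(\set{12},\set{12})$ occur, with $|\wof{\nu}|=1$ and $0$. Checking the procedure on one case also proves nothing in general. Worse, evaluating the printed theorem on that same example gives $-(1-x)=x-1\neq\rp{1,1,1}{x}$. So the $(-1)^n$ there is a misprint and cannot serve as the foundation of your argument.

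The missing step is to go one level down, to Theorem~\ref{rook scalar}, $\rp{m,n,r}{x}=\scalar{(\EE_m)^n}{(\HH_m)^r}$, together with the expansions of $\EE_m$ and $\HH_m$ in Section~\ref{sec: proof Gessel rook}. These come from Rosas' theorem plus the bijection between pairs $(A,\rho)$ and partitions of $[m+1]$. They say precisely that $\EE_m$ and $\HH_m$ are the images of $\symbfdh_m$ under the substitutions $A\mapsto(-1)^{\#A}(-\norma{A})=-(-1)^{\#A}\tau_A A$ and $A\mapsto x^{\#A}\norma{A}=x^{\#A}\tau_A A$ respectively.

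Let $c_n(\phi)$ denote the coefficient of $\prod_A A^{\phi_A}$ in $(\symbfdh_m)^n$, which you computed correctly. Then the corresponding coefficients in $(\EE_m)^n$ and $(\HH_m)^r$ are $c_n(\phi)\prod_A\bigl(-(-1)^{\#A}\tau_A\bigr)^{\phi_A}$ and $c_r(\phi)\prod_A(x^{\#A}\tau_A)^{\phi_A}$. By orthogonality, $\scalar{\prod_A A^{\phi_A}}{\prod_A A^{\phi_A}}=\prod_A\tau_A^{-\phi_A}\phi_A!$, so each $\phi$ contributes $c_n(\phi)c_r(\phi)\prod_A\bigl(-(-x)^{\#A}\tau_A\bigr)^{\phi_A}\phi_A!$.

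That is exactly Steps 3--5, for all $m,n,r$. It is also what the paper's ``expanding the powers in the scalar product'' yields. It shows that the factor in Theorem~\ref{sum rp phi} should read $(-1)^{|\wof{\nu}|}$ instead of $(-1)^n$, and with that correction your translation argument goes through verbatim.
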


As an example, consider the case $m=2$.
\begin{enumerate}
\item We have
  \begin{align*}
\symbfdh_2 &= 1 + \set{1} + \set{2} + \set{1}\set{2} + \set{12} \\  
  &= (1+ \set{1})(1+\set{2}) + \set{12},
\end{align*}
\item Then 
\begin{align*}
(\symbfdh_2)^n &= \sum_j \binom{n}{j}(1+\set{1})^{n-j} (1+\set{2})^{n-j} \set{12}^j\\
&= \sum_{j,k,\ell}
  \binom{n}{j} \binom{n-j}{k}\binom{n-j}{\ell} \set{1}^k \set{2}^\ell \set{12}^j.
\end{align*}
The formula for $(\symbfdh_2)^r$ is similar with $n$ changed into $r$.
\item Replacing in $(\symbfdh_2)^n$ each coefficient with its product with the corresponding coefficient in $(\symbfdh_2)^r$, we get
\[
\sum_{j,k,\ell}
\binom{n}{j} \binom{n-j}{k}\binom{n-j}{\ell}
\binom{r}{j} \binom{r-j}{k}\binom{r-j}{\ell}
\set{1}^k \set{2}^\ell \set{12}^j
\]
\item Rescaling, we get
\[
\sum_{j,k,\ell}
\binom{n}{j} \binom{n-j}{k}\binom{n-j}{\ell}
\binom{r}{j} \binom{r-j}{k}\binom{r-j}{\ell}
x^k \set{1}^k x^\ell \set{2}^\ell (-1)^j x^{2j} \set{12}^j.
\]
\item And, finally, applying the linear functionals, we obtain that
$\rp{2,n,r}{x}$ is equal to
\[
\sum_{j,k,\ell}
\binom{n}{j} \binom{n-j}{k}\binom{n-j}{\ell}
\binom{r}{j} \binom{r-j}{k}\binom{r-j}{\ell}
(-1)^j x^{2j+k+\ell} j! k! \ell!
\]
\end{enumerate}

For $m=3$, we find that $\rp{3,n,r}{x}$ is the sum of the 
\begin{multline*}
  (-1)^{i+j+k}
  \frac{a!b!c!}{i!j!k!\ell!q!}  \frac{2^s}{s!} x^{a+b+c+2i+2j+2k+3s}\\
  \times \binom{\ell+i}{a} \binom{\ell+j}{b} \binom{\ell+k}{c}
  \binom{q+i}{a} \binom{q+j}{b} \binom{q+k}{c}
\end{multline*}
 over all tuples $(\ell, q,i,j,k, s,a,b,c)$ such that $\ell+i+j+k+s=n$ and $q+i+j+k+s=r$.

\subsection{Case of relaxed Latin rectangles and colorings of the rook's graph}\label{relaxed}

Let us call \emph{relaxed Latin rectangles} of format $m\times n\times r$ the arrays with $m$ rows, $n$ columns, filled with $r$ symbols, fulfilling the column condition of Latin rectangles (every symbol appears at most once) and the following modified row condition: ``every symbol appear \emph{at most} once in every row''. Let us denote with $L_{m,n,r}$ the number of such relaxed Latin rectangles. (the case of Latin rectangles is with $r=n$, i.e. $L_{m,n,n}=L_{m,n}$).

Formulas for numbers $L_{m,n,r}$ have been considered in \cite{Pranesachar:chromatic}. As observed there, the number $L_{m,n,r}$ interprets also as the number of proper edge colorings of the  complete bipartite graph $K_{m,n}$, when $k$ colors are available; or, equivalently, the value at $r$ of the chromatic polynomial of the line graph of $K_{m,n}$. This line graph is the  graph whose vertices are the cells of a $m\times n$ array, and  the pairs of adjacent vertices are the pairs of cells in the same row or in the same colors (``rook's graph''). 

Here is the analogue of Gessel's Formula for counting relaxed Latin rectangles.
\[
  L_{m,n,r}
  = (-1)^n \sum_{(\nu, \mu)\in\lozenge_m(n,r)} (-1)^{\|\vof{\nu}\|} \binom{n}{\nu} \binom{r}{\mu} \prod_{ A \subset [m]\atop A\neq \emptyset} \tau_A^{\vof{\nu}_A} \cdot (\vof{\nu}_A)! 
  \]
  where
  \[
  \lozenge_m(n,r)=\setst{
    (\nu,\mu) \in \NN^{\Par{m}} \times \NN^{\Par{m+1}}
  }{
    |\nu|=n,
    |\mu| = m,
    \text{ and }
    \vof{\nu} = \wof{\mu}
  }
  .
  \]
  
See Section \ref{gseries relaxed} for more results on counting relaxed Latin rectangles.

  \subsection{Domains of summation as polytopes}

  Gessel's Formula, as well as its restatements and generalizations to rook polynomials, are summations over  large sets of indices. For Gessel's Formula, as stated in Theorem \ref{summation Gessel}, the number of indices $\nu_\pi$ and $\mu_\pi$ is twice the number of set partitions of $[m]$. One can eliminate some of the indices using the linear relations between them. For instance, for $m=4$, Pranesachar's formula uses only $18$ indices instead of the $30$ indices $\nu_\pi$ and $\nu_\pi$. Yet, the choices of what variables are eliminated is arbitrary, and hides the structure of the set of indices.

  We find useful to do the contrary, and make explicit this structure: the domains of summations $\polyGessel_m(n)$, $\polysub_m(n)$, $\polyfiber_m(\phi)$, $\polyrook_m(n,r)$,  $\polysubg_m(n, r)$, $\polyfiberg_m(n,\phi)$ and $\lozenge_m(n, r)$ are each the \emph{set of lattice points of a polytope} (whose description is obtained from the description we gave of the corresponding set, by replacing $\NN$ with $\RR^+$).

For instance, $\polysub_m(n)$ is the $n$-dilate of $\polysub_m(1)$, which is the Newton polytope of the function $f_m$ of the symbolic method for Gessel's Formula.
  

For another example, remember that the number $L_{m,n}$ of Latin rectangles of format $m\times n$ is, when $n \ge m$,  the leading coefficient of $\rp{m,n,n}{x}$ (degree $mn$). This can now be understood as follows: the polytope $\polysub_m(n)$  in Theorem  \ref{sum rp phi} is the face of the polytope $\polysubg_{m}(n, n)$
defined by the equation:
\[
\sum_{A \subset [m]\atop A \neq \emptyset} \phi_A\cdot \#A = mn.
\]
(Or, equivalently, by the $m$ equations $\sum_{A: x\in A} \phi_A=n$).


\section{Generating series}\label{sec:results}

Gessel's Formula and its generalization involve sums on  sets of indices that become very large. It is natural to ask for more compact formulas. This is done here by considering, instead of explicit formulas for single numbers of Latin rectangles, or rook polynomials, the generating series $g_m$ for these families of numbers.

Here we present our formulas for the generating series $g_2$ and $g_3$. They correspond to the case ofr rook placements on 3D chessboards with 2 or 3 levels.
We also include the case of $g_1$, that, despite being very classic, is interesting and fits well in the general picture. The proofs will be given in Section \ref{proofs g2 g3}.

\subsection{Rook placements on a $1 \times n \times r$ chessboard}

This case corresponds to the ordinary (i.e. 2-dimensional) rook placements on a $n\times r$  rectangular board. 
It is well-known that the corresponding rook polynomials are given by:
\begin{equation}\label{r1mn}
\rp{1,n,r}{x} =  \sum_d (n)_d (r)_d \frac{x^d}{d!},
\end{equation}
where $(n)_k=n(n-1)\cdots (n-(k-1))$ (falling factorial). 
As a direct consequence, 
\begin{equation}\label{g1}
g_1 = e^{z+t+ztx}.
\end{equation}

For later, we take note of the following property of the ordinary rook polynomials, that follows from \eqref{r1mn}:
$\rp{1,n,r}{x} = (r)_n x^{n} + o(x^{n})$. Therefore, when $r \ge n$, the leading term of $\rp{1,n,r}{x}$ is $(r)_n x^n$.

\subsection{Rook placements on a $2\times n \times r$ chessboard}

We get the following expression for $g_2$:
\begin{equation}\label{g2}
g_2 = e^{- ztx^2} \cdot \sum_{n,r}  \left( \rp{1,n,r}{x} \right)^2
\frac{z^n}{n!}\frac{t^r}{r!}. 
\end{equation}

Let $\Hadamard$ stand for the Hadamard product (coefficientwise product) for exponential series in the variables $z$ and $t$.
It is defined as:
\[
\left(\sum_{n,r} a_{n,r} \frac{z^n}{n!} \frac{t^r}{r!}\right)
\Hadamard
\left(\sum_{n,r} b_{n,r} \frac{z^n}{n!} \frac{t^r}{r!}\right)
=
\sum_{n,r} a_{n,r} b_{n,r} \frac{z^n}{n!} \frac{t^r}{r!}.
\]
Then \eqref{g2} can be restated as
\begin{theorem}
\[
g_2 = e^{-ztx^2} \cdot (g_1 \Hadamard g_1).
\]
\end{theorem}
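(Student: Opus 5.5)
The plan is to derive the statement from the explicit formula for $\rp{2,n,r}{x}$ given by the symbolic method of Theorem \ref{sum rp phi} in the case $m=2$, namely
\[
\rp{2,n,r}{x}=\sum_{j,k,\ell}
\binom{n}{j} \binom{n-j}{k}\binom{n-j}{\ell}
\binom{r}{j} \binom{r-j}{k}\binom{r-j}{\ell}
(-1)^j x^{2j+k+\ell}\, j!\, k!\, \ell!.
\]
First, fix $j$ and sum over $k$ and $\ell$. The sum over $k$ is $\sum_k \binom{n-j}{k}\binom{r-j}{k}k!\,x^k=\sum_k (n-j)_k(r-j)_k x^k/k!$, which equals $\rp{1,n-j,r-j}{x}$ by \eqref{r1mn}. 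The sum over $\ell$ is the same. Hence
\[
\rp{2,n,r}{x}=\sum_j (-1)^j x^{2j}\, j!\binom{n}{j}\binom{r}{j}\left(\rp{1,n-j,r-j}{x}\right)^2 .
\]

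Next, pass to the exponential generating series. Write $j!\binom{n}{j}\binom{r}{j}\frac{1}{n!\,r!}=\frac{1}{j!}\cdot\frac{1}{(n-j)!\,(r-j)!}$ and substitute $n'=n-j$, $r'=r-j$. This gives
\[
g_2=\sum_j \frac{(-ztx^2)^j}{j!}\sum_{n',r'}\left(\rp{1,n',r'}{x}\right)^2\frac{z^{n'}}{n'!}\frac{t^{r'}}{r'!}
=e^{-ztx^2}\,(g_1\Hadamard g_1),
\]
since by definition $g_1\Hadamard g_1=\sum_{n,r}\rp{1,n,r}{x}^2\frac{z^n}{n!}\frac{t^r}{r!}$. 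This is \eqref{g2} in its restated form.

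The only substantive input is the $m=2$ case of Theorem \ref{sum rp phi}, whose proof is deferred to Section \ref{sec: proof Gessel rook}. Everything after that is routine: a factorization of the triple sum and a shift of indices.

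If one wanted a self-contained argument instead, a direct combinatorial proof is also available. A placement on the $2\times n\times r$ board is a pair of 2D rook placements, one per level. The pairs where both levels use the same cell are removed by inclusion–exclusion. Choosing $j$ such shared cells contributes $(-1)^j j!\binom{n}{j}\binom{r}{j}x^{2j}$, and the remaining board is $(n-j)\times(r-j)$ with arbitrary independent placements on the two levels. This reproduces exactly the displayed identity.

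The main point to check in that alternative is the following. Forcing $j$ common cells, as rooks on both levels, and then placing arbitrary rooks in the remaining rows and columns on each level must enumerate exactly the pairs in which a specified set of $j$ cells is common to both levels. The inclusion–exclusion then yields the placements with no common cell, which are precisely the 3D non-attacking configurations.
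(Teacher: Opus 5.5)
Your proof is correct, but it runs at the level of coefficients, whereas the paper's runs at the level of generating series. The paper never expands powers. It writes $g_2=\scalar{e^{z\EE}}{e^{t\HH}}$ with $\EE=(1+\set{1})(1+\set{2})-\set{12}$ and $\HH=(1+x\set{1})(1+x\set{2})+x^2\set{12}$. It splits off the variable $\set{12}$ by Lemma \ref{separation 1}, evaluates $\scalar{e^{-z\set{12}}}{e^{tx^2\set{12}}}=e^{-ztx^2}$ by Lemma \ref{rule exp}, and identifies the remaining factor with $g_1\Hadamard g_1$ by Lemma \ref{separation}. You start instead from the already expanded coefficients and prove
\[
\rp{2,n,r}{x}=\sum_j(-1)^j x^{2j}\, j!\binom{n}{j}\binom{r}{j}\left(\rp{1,n-j,r-j}{x}\right)^2 ,
\]
which is exactly the coefficient form of the theorem. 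Your factorization of the $(k,\ell)$-sum is the coefficientwise counterpart of the Hadamard factorization, and your shift $n\mapsto n-j$, $r\mapsto r-j$ is the counterpart of multiplying by $e^{-ztx^2}$. Your route is more elementary and needs none of the separation lemmas. The paper's route avoids all expansion and is the one that carries over to $g_3$: there the cross terms $(1+\set{i})\set{jk}$ make the coefficient bookkeeping heavy, and Lemma \ref{rule 3} does the work. Your inclusion--exclusion alternative is independent of multisymmetric functions altogether. It actually proves the combinatorial interpretation that the paper only states after the theorem. It relies on the intended non-attacking rule (two rooks share at most one coordinate, as the PLR correspondence requires), under which the two levels are 2D placements with no common cell. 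One caution on citation: the formula you start from is right, but cite it as the worked $m=2$ example, i.e.\ $\scalar{\EE^n}{\HH^r}$ from Theorem \ref{rook scalar} expanded binomially and evaluated by orthogonality. The printed statement of Theorem \ref{sum rp phi} carries a global factor $(-1)^n$ where $(-1)^{|\wof{\nu}|}$ is meant; already for $n=r=1$ it gives $-(2x^2-2x+1)$ instead of $1+2x$. So specializing it literally would not produce your formula.
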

This formula has a clear combinatorial interpretation: the  placements   of $d$ rooks on a $2 \times n \times r$ chessboard  are obtained from the placements of the rooks on each level (two bidimensional rook placements on a $n\times r$ chessboard) by excluding those that have rooks in coinciding position.

\subsection{Rook placements on a  $3\times n \times r$ chessboard}

For $m=3$, we get the following formula:
\begin{equation}\label{g3:F}
  g_3 =
e^{2 ztx^3}  \cdot
\sum_{n,r} \left( F\left(n,r,x,-ztx^2\right)\right)^3
\frac{z^n}{n!}\frac{t^r}{r!} ,
\end{equation}
where
\[
F(n,r,x,y) = \sum_{ a} \rp{1, n+a, r+a}{x} \; \frac{y^a}{a!}.
\]


Formula \eqref{g3:F} can be expressed  as an Hadamard product as well. 
\begin{theorem}
  Set 
\[
\Phi = \sum_{n, r} F(n,r, x, y) \; \frac{z^n}{n!} \frac{t^r}{r!}.
\]
Then
\begin{equation}\label{g3}
g_3 = e^{2 ztx^3}  \cdot \left( \Phi \Hadamard \Phi \Hadamard \Phi\right)_{|y = -ztx^2}.
\end{equation}
\end{theorem}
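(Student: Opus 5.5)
The plan is to prove \eqref{g3:F} directly by inclusion--exclusion on the cells of the $n\times r$ board. I will not pass through Theorem \ref{sum rp phi}. The Hadamard form \eqref{g3} is then immediate: the coefficient of $\frac{z^n}{n!}\frac{t^r}{r!}$ in $\Phi\Hadamard\Phi\Hadamard\Phi$ is $F(n,r,x,y)^3$, and $y$ is specialized afterwards.

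\emph{Setting up the expansion.} A 3D rook placement on the $3\times n\times r$ chessboard is the same as a triple $(P_1,P_2,P_3)$ of 2D rook placements on the $n\times r$ board (one per level) such that every cell $(j,k)$ is occupied in at most one level. For a cell with occupancy set $O\subseteq[3]$, I use the identity
\[
\iverson{\#O\le 1} \;=\; 1-\sum_{\{u,v\}\subset[3]}\iverson{\{u,v\}\subseteq O}+2\,\iverson{O=[3]}.
\]
It holds for $\#O=2$ ($1-1=0$) and for $\#O=3$ ($1-3+2=0$). Multiplying over all cells and expanding gives $\rp{3,n,r}{x}$ as a signed sum over the following data.
\begin{itemize}
\item Triples $(P_1,P_2,P_3)$, each weighted by $x^{\#P_1+\#P_2+\#P_3}$.
\item A choice of marks: a set $T$ of cells occupied in all three levels (weight $2$ each), and sets $S_{12},S_{13},S_{23}$ of cells with $S_{uv}$ occupied in levels $u$ and $v$ (weight $-1$ each).
\end{itemize}
Each cell carries at most one mark. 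Any two of these mark types share a level, so $T\cup S_{12}\cup S_{13}\cup S_{23}$ is itself a non-attacking 2D placement.

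\emph{Factorization.} Let $s=\#T$, and let $a=\#S_{23}$, $b=\#S_{13}$, $c=\#S_{12}$. Delete all rows and columns met by marked cells. This leaves an $n'\times r'$ board with $n'=n-s-a-b-c$ and $r'=r-s-a-b-c$. The rooks of level $1$ other than its marked ones ($T\cup S_{12}\cup S_{13}$) must avoid the rows and columns of those cells. They are free to use the $a$ rows and columns of $S_{23}$, but not the marked cells themselves. Hence they form an arbitrary placement on an $(n'+a)\times(r'+a)$ board, contributing $\rp{1,n'+a,r'+a}{x}$.

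I need to check one point here: a level-$1$ rook cannot sit on an $S_{23}$ cell itself, because that cell would then be occupied in all three levels. On the expansion side this is harmless. Such configurations still appear in the expansion and the identity above still accounts for them correctly, because I am expanding a product of per-cell identities rather than restricting configurations. So level $1$'s remaining rooks really do range over all placements on the $(n'+a)\times(r'+a)$ board, with no constraint coming from the $S_{23}$ cells.

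Symmetrically, level $2$ contributes $\rp{1,n'+b,r'+b}{x}$ and level $3$ contributes $\rp{1,n'+c,r'+c}{x}$. Each marked cell carries its rook weights: $2x^3$ for a cell of $T$, and $-x^2$ for a cell of some $S_{uv}$.

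\emph{Exponential bookkeeping.} Choosing which rows and columns carry the $s+a+b+c$ marked cells, and how they are matched, gives the factor
\[
\binom{n}{s,a,b,c,n'}\binom{r}{s,a,b,c,r'}\,s!\,a!\,b!\,c!.
\]
Divided by $n!\,r!$, this is exactly the labelled-product rule for exponential series in $z,t$. Each marked cell therefore contributes a factor $zt/(\text{its count})!$:
\begin{itemize}
\item the $T$-cells give $\sum_s (2ztx^3)^s/s!=e^{2ztx^3}$;
\item the $S_{23}$, $S_{13}$, $S_{12}$ cells give $(-ztx^2)^a/a!$, $(-ztx^2)^b/b!$, $(-ztx^2)^c/c!$ respectively.
\end{itemize}
For fixed $(n',r')$, the sums over $a$, $b$, $c$ factor into $F(n',r',x,-ztx^2)^3$. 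Summing over $(n',r')$ with $\frac{z^{n'}}{n'!}\frac{t^{r'}}{r'!}$ then yields \eqref{g3:F}.

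The main obstacle is this last step: verifying that the multinomial counting of the marked rows and columns produces exactly the factors $\frac{(zt)^a}{a!}$, with no extra factorials. It is also the place where the independence of the three levels' residual boards, which share only the $n'\times r'$ core, must be stated cleanly so that the result is a Hadamard cube rather than an ordinary product. As a consistency check, the same argument with only two levels produces just the pair term $e^{-ztx^2}$ and recovers the formula for $g_2$.
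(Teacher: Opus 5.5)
Your proof is correct, and it takes a genuinely different route from the paper's.

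\textbf{The paper's route.} The paper's proof is purely algebraic. It writes $g_3=\scalar{e^{z\EE}}{e^{t\HH}}$, with $\EE$ and $\HH$ expanded via Rosas's formulas in the variables $\set{1},\ldots,\set{123}$. It then splits off $\set{123}$ using Lemmas~\ref{separation 1} and~\ref{rule exp}, and eliminates the three variables $\set{ij}$ with Lemma~\ref{rule 3}. Finally it gets the Hadamard cube from Lemma~\ref{separation} together with $\rp{1,i,j}{x}=\scalar{(1+\set{1})^i}{(1+x\set{1})^j}$.

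\textbf{How your argument relates to it.} Your per-cell inclusion--exclusion is the combinatorial content of that computation. The weights $-1$ and $2$ of your marks are the coefficients $(-1)^{\#B-1}\tau_B$ of $\set{ij}$ and $\set{123}$ in $\EE$, and your factor $e^{2ztx^3}$ is the $\set{123}$ factor. The fact that level $k$ regains the $a$ rows and columns of the $\set{i,j}$-marked cells is exactly what Lemma~\ref{rule 3} encodes, through the factors $(1+\set{k})^a$ and $(1+x\set{k})^a$. The paper states that lemma without proof.

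\textbf{The step you flag is fine.} With $d=s+a+b+c$, the number of markings of type $(s,a,b,c)$ is $\binom{n}{d}\binom{r}{d}\,d!\,\frac{d!}{s!\,a!\,b!\,c!}$. This equals your expression, and after division by $n!\,r!$ it leaves exactly $\frac{1}{n'!\,r'!\,s!\,a!\,b!\,c!}$. For a fixed marking, the sum over $(P_1,P_2,P_3)$ factors level by level. After expansion, the only surviving constraints are $P_u\supseteq M_u$, where $M_u$ is the set of marked cells whose mark contains $u$. There is no other coupling between levels, so you get a Hadamard cube.

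\textbf{One wording fix.} Delete the phrase ``but not the marked cells themselves''. In the expanded sum, level-$1$ rooks may sit on $S_{23}$ cells, as your next paragraph correctly concludes.

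\textbf{What each route buys.} Your route gives a self-contained, elementary proof in which every factor has a combinatorial meaning. It also independently confirms the sign $+2ztx^3$ in the exponential; the paper's proof text writes $e^{-2z\set{123}}$ and $e^{-2ztx^3}$, which is a slip. The paper's route buys a uniform algebraic framework, $g_m=\scalar{\exp(z\EE_m)}{\exp(t\HH_m)}$ for every $m$. There the coincidence weights come out of Rosas's expansion automatically, and the same machinery also yields the Gessel-type formula of Theorem~\ref{sum rp phi}.
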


\subsection{Implementation}

The formulas for $g_1$, $g_2$ and $g_3$ presented above afford remarkably concise implementations. Here is such an implementation in SageMath \cite{Sagemath}, to get their expansion up to order $N$.

\begin{lstlisting}[language=Python]
A.<x> = PolynomialRing(QQ)
B.<z, t> = PowerSeriesRing(A)

def Hadamard(P, Q, N):
    """Hadamard product of two exponential series P and Q
    in the variables z and t, up to order N"""
    return sum(P[i,j]*Q[i, j]*factorial(i)*factorial(j)*z^i*t^j 
               for (i, j) in P.exponents())

def g1(N):
    """Exponential generating series for the rook polynomials
    for a  1 x n x r chessboard,  up to order N"""
    return (z+t+x*z*t).exp(prec=N+1)    

def g2(N):
    """Exponential generating series for the rook polynomials
    for a 2 x n x r chessboard"""
    return Hadamard(g1(N), g1(N), N) * (-x^2*z*t).exp(prec=N+1)

def F(n, r, N):
    return sum(x^i/factorial(i) * (-z*t*x^2)^a/factorial(a)
                   * falling_factorial(n+a, i)
                   * falling_factorial(r+a, i)
                   for a in [0..(N-n-r)/2]
                   for i in [0..min(n,r) +a]) 

def g3(N):
    """Exponential generating series for the rook polynomials
    for a  3 x n x r chessboard"""
    return (sum(F(n,r,N)^3 * z^n/factorial(n) *t^r/factorial(r)
            for (n, r) in IntegerListsLex(length=2, max_sum=N)
             ) * (2*z*t*x^3).exp(prec=N+1))
\end{lstlisting}


\subsection{From 3D rook placements to Latin rectangles}\label{specialization}

Since the rook polynomial $\rp{m,n,n}{x}$ contains as a coefficient (the one of degree $mn$) the number of Latin rectangles of format $m\times n$, it is natural to ask for a process to derive from $g_m$ a generating series for the numbers of Latin rectangles with $m$ lines.

Here is such a process: (1) replace $x$ with $1/x$; (2) replace $z$ with $zx^m$; (3) replace $x$ with $0$; (4) replace $z$ with $z/t$; (5) replace $t$ with 0. This yields the doubly exponential generating series:
\[
\sum_n  L_{m,n} \frac{z^n}{(n!)^2}.
\]
Writing it as
\[
\sum_n \frac{L_{m,n}}{n!} \frac{z^n}{n!},
\]
we interpret the series as the exponential generating series for the numbers $L_{m,n}/n!$   
of  so-called \emph{normalised} Latin rectangles (the Latin rectangles with first line $1,2,\ldots, n$).

The reader can check that, applying this process to $g_1$, $g_2$ and $g_3$, one arrives to:
\[
\sum_n \frac{L_{1,n}}{n!} \frac{z^n}{n!} = e^z,
\]
\[
\sum_n \frac{L_{2,n}}{n!} \frac{z^n}{n!} =
\frac{e^{-z}}{1-z},
\]
\[
\sum_n \frac{L_{3,n}}{n!} \frac{z^n}{n!}
= e^{2z} \sum_n \frac{n! z^n}{(1+z)^{3n+3}}.
\]

The series for $m=1$ and $m=2$ are easily explained. For $m=1$: the 1-line Latin rectangles are just the permutations, whence  $L_{1,n}=n!$. For $m=2$, one recognizes the exponential generating series of the Derangement Numbers. Indeed, the 2-lines normalised Latin rectangles have the derangements as their second line. Finally, the formula for $m=3$ appears in  \cite[Ex.4.5.10 and its solution]{GouldenJackson}; see also \cite{Gessel:3lines}.

\subsection{From 3D rook placements to colorings of the rook's graphs}\label{gseries relaxed}

In Section \ref{relaxed}, we have introduced  the number $L_{m,n,r}$ of relaxed Latin rectangles of format $m\times n$ filled with numbers in $[r]$.
We see that  $L_{m,n,r}$ is the coefficient of degree $mn$ of the rook polynomial $\rp{m,n,r}{x}$, as well as its leading coefficient, provided that $r \ge m$ and $r \ge n$ (if these conditions are not satisfied then $L_{m,n,r}=0$).
Therefore, the generating series
\[
\sum_{n,r} L_{m,n,r} \frac{z^n}{n!} \frac{t^r}{r!}
\]
can be recovered from $g_m$ following steps (1), (2) and (3)  of  section \ref{specialization}, but skipping (4) and (5). We obtain the following formulas:
\begin{corollary}
\[
\sum_{n,r} L_{1,n,r} \frac{z^n}{n!} \frac{t^r}{r!} =
  e^{t+zt},
\]
\[
\sum_{n,r} L_{2,n,r} \frac{z^n}{n!} \frac{t^r}{r!} =
  \frac{e^{-zt}}{1-zt} \exp\left(\frac{t}{1-zt}\right),
\]
\[
\sum_{n,r} L_{3,n,r} \frac{z^n}{n!} \frac{t^r}{r!} =
  e^{2zt} \sum_{n,r} \frac{\left((r)_n\right)^3}{(1+zt)^{3r+3}} \frac{z^n}{n!} \frac{t^r}{r!}.
  \]
\end{corollary}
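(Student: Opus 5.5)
The plan is to apply the three substitutions of Section~\ref{specialization} (replace $x$ by $1/x$, then $z$ by $zx^m$, then set $x=0$) directly to the closed formulas \eqref{g1}, \eqref{g2} and \eqref{g3:F} (equivalently, to the Hadamard-product statements for $g_2$ and $g_3$ proved earlier). The one structural point to settle first is that these three steps are compatible with products. Steps (1)--(2) send $x^d z^n t^r$ to $x^{mn-d} z^n t^r$. They define a ring homomorphism on power series in $z,t$ with coefficients Laurent polynomials in $x$.

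Step (3) is a homomorphism on the subring where every coefficient is a polynomial in $x$. So I need to check that every factor appearing in the formulas lands in that subring after steps (1)--(2); then the transforms can be multiplied factor by factor. For $g_m$ itself this check is the degree bound $d\le mn$ for a placement of $d$ rooks on an $m\times n\times r$ board, and it gives the first claim of the corollary: the image of $g_m$ is $\sum_{n,r}L_{m,n,r}\frac{z^n}{n!}\frac{t^r}{r!}$.

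For $m=1$, the exponent $z+t+ztx$ becomes $zx+t+zt$, and setting $x=0$ gives $e^{t+zt}$. For $m=2$, the factor $e^{-ztx^2}$ becomes $e^{-zt}$. In the Hadamard square, each coefficient $\rp{1,n,r}{x}^2$ becomes $\bigl(x^n\rp{1,n,r}{1/x}\bigr)^2$. By \eqref{r1mn},
\[
x^n\rp{1,n,r}{1/x}=\sum_d (n)_d(r)_d\,\frac{x^{n-d}}{d!},
\]
which is a polynomial whose value at $x=0$ is $(r)_n$. So the image is $e^{-zt}\sum_{n,r}(r)_n^2\frac{z^n}{n!}\frac{t^r}{r!}$. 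To sum this, I write $r=n+s$, so the coefficient becomes $\binom{n+s}{n}(zt)^n\frac{t^s}{s!}$. Summing over $n$ with $\sum_n\binom{n+s}{n}u^n=(1-u)^{-s-1}$, and then over $s$, gives $\frac{1}{1-zt}\exp\bigl(\frac{t}{1-zt}\bigr)$, as claimed.

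For $m=3$, the factor $e^{2ztx^3}$ becomes $e^{2zt}$, and $y=-ztx^2$ becomes $-zt x^3/x^2=-ztx$. I distribute the weight $x^{3n}$ as $x^n$ on each of the three copies of $F$. Each copy then becomes
\[
x^nF(n,r,1/x,-ztx)=\sum_a x^{n+a}\rp{1,n+a,r+a}{1/x}\,\frac{(-zt)^a}{a!}.
\]
By the $m=1$ computation this is again polynomial in $x$, with value at $x=0$ equal to
\[
\sum_a (r+a)_{n+a}\frac{(-zt)^a}{a!}=\frac{r!}{(r-n)!}\sum_a\binom{r+a}{a}(-zt)^a=\frac{(r)_n}{(1+zt)^{r+1}}.
\]
Cubing gives the stated formula.

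The main obstacle is the legitimacy of the termwise specialization. Inside $F$ the sum over $a$ is infinite, so I must check that it is convergent as a formal series in $zt$. I also need that no negative powers of $x$ survive in any factor before setting $x=0$. The argument for this is that the factor $x^{n+a}$ exactly compensates the degree of $\rp{1,n+a,r+a}{x}$, which is at most $n+a$, and that the power of $zt$ grows with $a$.
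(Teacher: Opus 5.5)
Your proposal is correct and takes the same route as the paper, which obtains the corollary by applying steps (1)--(3) of the specialization process to the closed formulas for $g_1$, $g_2$ and $g_3$. You also supply the details the paper leaves implicit: after steps (1)--(2) every factor has polynomial coefficients in $x$ (via $\deg \rp{1,n+a,r+a}{x}\le n+a$), so $x=0$ can be set factor by factor; and the binomial-series summations give $\frac{1}{1-zt}\exp\bigl(\frac{t}{1-zt}\bigr)$ and $(r)_n(1+zt)^{-r-1}$.
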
  
The results can be expressed with Hadamard products: 
\[
\sum_{n,r} L_{2,n,r} \frac{z^n}{n!} \frac{t^r}{r!} =
  e^{-zt} \cdot \left( e^{t(1+z)}\Hadamard e^{t(1+z)}\right),
  \]
and   
\[
\sum_{n,r} L_{3,n,r} \frac{z^n}{n!} \frac{t^r}{r!} 
=
e^{2zt} \cdot \left(Q_3 \Hadamard Q_3 \Hadamard Q_3\right)_{| y = zt} 
\]
where
\[
Q_3 = \frac{1}{1+y} \exp\left(\frac{t(1+y+z)}{1+y} \right).
\]

\subsection{Packed rook placements}

Note that each rook placement is counted infinitely many times by the generating series $g_m$. Indeed, for any given any rook placement, one can add empty slices parallel to the planes of coordinates to get the same rook placement on a bigger chessboard.   Let us call \emph{packed rook placement} a rook placement with no empty slice. Let $\frp{m,n,r}{x}$ be the ordinary generating function for packed rook placements on a $m\times n \times r$ chessboard (``packed rook polynomial''). 
  Then the generating series for ordinary rook placements and packed rook placements are related with the simple relation:
\[
\sum_{m, n,r} \rp{m, n, r}{x} \frac{u^m}{m!} \frac{z^n}{n!}\frac{t^r}{r!} 
=
e^{z+t+u} \cdot 
\left( \sum_{m, n,r} \frp{m, n, r}{x} \frac{u^m}{m!} \frac{z^n}{n!}\frac{t^r}{r!}\right).
\]


\section{Multisymmetric functions}\label{sec:multisymmetric}

MacMahon showed that the number of Latin rectangles is a coefficient of the expansion of the power of the permanent. Refining this result and taking profit of symmetries of the permanent,  he also showed that this coefficient extraction could be performed in the ring of multisymmetric functions. Gessel clarified and modernized  MacMahon's work. The approach with multisymmetric functions fits like a glove to the derivation of formulas for the 3D rook polynomials and the use of generating series.

In this section, we present briefly the multisymmetric functions. We review how to derive Gessel's Formula for $L_{m,n}$ with them. Then we show how to extend it for rook polynomials. Finally, we use these calculations with multisymmetric polynomials to derive our formulas for the series $g_2$ and $g_3$.

\subsection{Counting Latin rectangles with polynomials}

To each array with $m$ rows and $n$ columns, filled with symbols from $1$ to $n$ we associate a sequence of $n$ squarefree monomials of degree $m$ in variables $x_{i, j}$, for $i\in [n]$ and $j\in [m]$, as follows: the $k$-th monomial admits $x_{i, j}$ as a factor when the $k$-th column has a symbol $i$ in position $j$.

So, for instance, when $m=2$ and $n=3$,  the sequences of monomials associated to the arrays
\[
\begin{bmatrix}
  1 & 2 & 3 \\
  3 & 1 & 2
\end{bmatrix}
\text{ and }
\begin{bmatrix}
  2 & 2 & 3 \\
  1 & 3 & 2
\end{bmatrix}  
\]
are 
$(x_{11} x_{32} ; x_{21} x_{12} ; x_{31} x_{22})$ and $(x_{21 } x_{12} ; x_{21} x_{32} ; x_{31} x_{22})$.

A column of the array fulfills the column condition of Latin rectangles (no symbol repeated) when its monomial is  of the form
$x_{f(1) 1} x_{f(2) 2} \cdots x_{f(m) m}$ for $f: [m] \hookrightarrow [n]$ injective. The sum of all such monomials is the permanent of the rectangular matrix of the $x_{i,j}$, which is:
\[
\per\left((x_{i,j})_{i\in [n], j\in [m]}\right) = \sum_{f: [m] \hookrightarrow [n] } \prod_{j=1}^m x_{f(j) j}.
\]
The sequences of monomials that fulfill the row conditions (each symbol appears exactly one in each row )  are those whose product is the product of all variables $x_{ij}$ for $i \in [n]$ and $j\in [m]$. 
We deduce, as MacMahon: 
\begin{equation}\label{permanent}
  \left(
  \per\left((x_{i,j})_{i\in [n],  j\in [m]}\right)
  \right)^n = L_{m,n} \cdot \prod_{i\in [n]\atop j\in [m]} x_{i,j} + \text{ combination of other monomials.}
\end{equation}
MacMahon observed that this identity could be recast in the setting of multisymmetric functions, and that the coefficient extraction could be performed in that setting.

\subsection{The algebra of multisymmetric functions}

The $m$-fold \emph{multisymmetric functions} (also known as \emph{vector symmetric functions}) are the formal  series in  variables $x_{i,j}$, for $i\in \NN$ and $j\in [m]$,   that are the entries of a matrix with countably many rows and $m$ columns, invariant under any permutation of finitely many of the rows, and with finitely many homogeneous components (the case $m=1$ corresponds to the classical symmetric functions). 
MacMahon studied in detail these multisymmetric functions \cite[Vol.2, Section XI]{MacMahon:book}.
He used them to get new methods and results in enumerative combinatorics, in particular for counting Latin rectangles.
The results we presented in Section \ref{sec:results} were obtained by applying MacMahon's methods, as explained, updated and extended by Gessel in \cite{Gessel:enumerative}.

Let us give a quick idea of the calculations involved in the MacMahon-Gessel approach. The algebra of $m$-fold multisymmetric functions (with rational coefficients) is equipped  with analogues of classical families of symmetric functions: power sums $p_{\alpha}$, elementary functions $e_{\alpha}$ and complete sums $h_{\alpha}$. 
Each of these families is indexed by the non-zero tuples $\alpha\in \mathbb{N}^m$, and generate freely the algebra. The algebra of $m$-fold multisymmetric functions is graded with a degree with values in $\NN^m$, and each of $p_{\alpha}$, $e_{\alpha}$ and $h_{\alpha}$ is homogeneous of degree $\alpha$.

The multisymmetric power sums are defined by:
\[
p_{\alpha} = \sum_{i=1}^{\infty} \multi{x}_i^{\alpha}
\]
where $\multi{x}_i^\alpha$ stands for $x_{i,1}^{\alpha_1} \cdots x_{i,m}^{\alpha_m}$. 
The complete sums and the elementary functions are best defined by means of generating series. Let $t_1, \ldots, t_m$ be variables, then
\[
\sum_{\alpha\in\NN^m} h_{\alpha} \multi{t}^{\alpha} = \prod_{i=1}^{ \infty} \frac{1}{1-\sum_{j=1}^m x_{i,j} t^j},
\quad
\sum_{\alpha\in\NN^m} e_{\alpha} \multi{t}^{\alpha} = \prod_{i=1}^{ \infty} \left(1+\sum_{j=1}^m x_{i,j} t^j\right),
\]
where we conveniently defined $e_{(0,0,\ldots,0)}$ and $h_{(0,0,\ldots,0)}$ to be $1$.
From these definitions follow the relations:
\[
\sum_{\alpha\in\NN^m} h_{\alpha} \multi{t}^{\alpha}  = \exp\left(
\sum_{\alpha \in \NN^m \atop \alpha\neq \multi{0}} \frac{p_{\alpha}}{Z_{\alpha}} \multi{t}^{\alpha}
\right),
\quad
\sum_{\alpha\in\NN^m} e_{\alpha} \multi{t}^{\alpha}  = \exp\left(
-\sum_{\alpha \in \NN^r \atop \alpha\neq \multi{0}} \frac{p_{\alpha}}{Z_{\alpha}} (-\multi{t})^{\alpha}
\right),\]
where
\[
Z_{\alpha}=\frac{\alpha_1! \alpha_2! \cdots \alpha_m!}{(\alpha_1+\alpha_2+\cdots+\alpha_m-1)!}.
\]

In addition, this algebra is endowed with a scalar product (analogue of Hall's scalar product for symmetric functions) for which 
the monomials in th power sums are orthogonal and fulfill:

\begin{equation}\label{eq:orthogonality}
\scalar{\prod_{\alpha} p_{\alpha}^{k_{\alpha}}}{\prod_{\alpha} p_{\alpha}^{\ell_\alpha}}
= \prod_{\alpha} \scalar{ p_{\alpha}^{k_{\alpha}}}{ p_{\alpha}^{\ell_\alpha}}
=
\begin{cases}\prod_\alpha \left(Z_{\alpha}\right)^k k!
  & \text{ if }\forall \alpha, \;k_\alpha = \ell_\alpha, \\
  0 & \text{otherwise.}
  \end{cases}
\end{equation}

A key feature in our use of multisymmetric functions is that extraction of coefficients of monomials can be performed  by means of scalar products with products of complete sums:
\begin{lemma}[{see \cite{Gessel:enumerative}}]\label{extraction}
  Let $f$ be a multisymmetric function. Let $\vec{u}_1$, $\vec{u_2}$, \ldots, $\vec{u}_r$ be vectors in $\NN^m$.

  The coefficient of $\multi{x}_1^{\vec{u}_1} \multi{x}_2^{\vec{u}_2} \cdots \multi{x}_r^{\vec{u}_r} $ in the expansion in monomials of $f$ is
  \[
  \scalar{f}{ h_{\vec{u}_1} h_{\vec{u}_2} \cdots h_{\vec{u}_r}}.
  \]
\end{lemma}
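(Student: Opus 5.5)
The plan is to prove the lemma by duality between the complete sums and the monomial multisymmetric functions, exactly as in the classical case $m=1$. Let $m_\lambda$ denote the monomial multisymmetric function indexed by a multiset $\lambda$ of non-zero vectors of $\NN^m$: the sum of all distinct monomials $\multi{x}_{i_1}^{\vec{v}_1}\cdots\multi{x}_{i_k}^{\vec{v}_k}$ with distinct row indices $i_1,\dots,i_k$ whose exponent vectors form the multiset $\lambda$. These form a basis of the algebra. Since $f$ is invariant under permutations of the rows, its expansion is $f=\sum_\lambda c_\lambda m_\lambda$. Here $c_\lambda$ is the coefficient in $f$ of any one monomial of type $\lambda$, in particular of $\multi{x}_1^{\vec{u}_1}\cdots\multi{x}_r^{\vec{u}_r}$ when $\lambda$ is the multiset of the non-zero $\vec{u}_i$. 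Zero vectors among the $\vec{u}_i$ contribute $h_{\multi{0}}=1$ and may be discarded. So the lemma reduces to the duality statement $\scalar{m_\lambda}{h_\mu}=\delta_{\lambda\mu}$, where $h_\mu=\prod_{\vec{v}\in\mu}h_{\vec{v}}$.

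To establish the duality, I would use the standard Cauchy-kernel criterion. Take a second family of vector variables $\multi{y}_k$ and form
\[
\Omega=\prod_{i,k}\frac{1}{1-\sum_{j=1}^m x_{i,j}y_{k,j}} .
\]
There are two ways to expand $\Omega$.

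\textbf{First expansion.} Use the generating series of the $h_\alpha$ with $\multi{t}=\multi{y}_k$ for each fixed $k$, which gives $\prod_k\sum_\alpha h_\alpha(\multi{x})\multi{y}_k^\alpha$. Collecting the monomials in $\multi{y}$ by type yields $\Omega=\sum_\mu h_\mu(\multi{x})\,m_\mu(\multi{y})$.

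\textbf{Second expansion.} Take the logarithm:
\[
\log\Omega=\sum_{i,k}\sum_{\ell\ge1}\frac{1}{\ell}\Bigl(\sum_j x_{i,j}y_{k,j}\Bigr)^\ell=\sum_{\alpha\ne\multi{0}}\frac{(|\alpha|-1)!}{\alpha_1!\cdots\alpha_m!}\,p_\alpha(\multi{x})p_\alpha(\multi{y})=\sum_\alpha\frac{p_\alpha(\multi{x})p_\alpha(\multi{y})}{Z_\alpha}.
\]
Exponentiating gives $\Omega=\sum_{k}\prod_\alpha\frac{p_\alpha(\multi{x})^{k_\alpha}p_\alpha(\multi{y})^{k_\alpha}}{Z_\alpha^{k_\alpha}k_\alpha!}$, summed over finitely supported families $k=(k_\alpha)$. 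By the orthogonality relation \eqref{eq:orthogonality}, this says precisely that $\Omega=\sum_\rho p_\rho(\multi{x})p_\rho(\multi{y})/\scalar{p_\rho}{p_\rho}$, i.e. $\Omega$ is the reproducing kernel of the scalar product.

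\textbf{Conclusion.} Let $(a_\lambda)$ and $(b_\lambda)$ be bases, homogeneous for the $\NN^m$-grading. Then $\sum_\lambda a_\lambda(\multi{x})b_\lambda(\multi{y})=\Omega$ if and only if $\scalar{a_\lambda}{b_\mu}=\delta_{\lambda\mu}$. This is proved by writing both bases in the orthogonal basis $p_\rho$ and comparing matrices, degree by degree; each graded piece is finite-dimensional. Applying it to $a=h$, $b=m$ gives the duality, and by linearity $\scalar{f}{h_\mu}=c_\mu$, which is the claim.

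The main obstacle is the bookkeeping in the logarithm step: the multinomial expansion of $(\sum_j x_{i,j}y_{k,j})^\ell$ must produce exactly the factor $1/Z_\alpha$ with the paper's normalization of $Z_\alpha$. Checking this is routine once one writes $\binom{\ell}{\alpha}/\ell=(|\alpha|-1)!/\prod\alpha_j!$. A secondary point is finiteness: the variables are countably many, so one must work degree by degree, or with finitely many rows and then pass to the limit, to justify the basis-change argument.
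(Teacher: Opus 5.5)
Your proof is correct, and it supplies something the paper does not: the paper gives no argument for this lemma and simply defers to Gessel.

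The paper \emph{defines} the scalar product through the orthogonality relation \eqref{eq:orthogonality} of the power sums, rather than by declaring the $h_\mu$ dual to the monomial functions. The lemma therefore amounts to saying that these two possible definitions agree. Your comparison of the two expansions of the Cauchy kernel is the standard way to prove this, carried over from ordinary symmetric functions to the multisymmetric setting.

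Your computation $\frac{1}{\ell}\binom{\ell}{\alpha}=\frac{(|\alpha|-1)!}{\alpha_1!\cdots\alpha_m!}=1/Z_\alpha$ also checks that the paper's normalisation of $Z_\alpha$ is exactly the one that makes $h$ and $m$ dual. A bare citation leaves this implicit. The only inputs you borrow are facts the paper asserts anyway: the power-sum monomials form a $\mathbb{Q}$-basis, and each $\NN^m$-graded piece is finite-dimensional.

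The last step can be shortened. You need not know beforehand that the $h_\mu$ form a basis. Let $c_{\rho\mu}$ be the coefficient of $m_\mu$ in $p_\rho$. Comparing coefficients of $m_\mu(\multi{y})$ in your two expansions of $\Omega$ gives $h_\mu=\sum_\rho c_{\rho\mu}\,p_\rho/\scalar{p_\rho}{p_\rho}$. Hence $\scalar{p_\rho}{h_\mu}=c_{\rho\mu}$, and linearity in the first argument then yields the lemma.
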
  

About elementary functions,  we will use mainly  those indexed by 0-1 vectors $\chi(A)$ (characteristic vector of a subset $A \subset [m]$) . These are:
\[
e(A) := e_{\chi(A)} = \sum_{f} \prod_{j \in A} x_{f(j)j},
\]
where the sum is over all injections $f:A \hookrightarrow \NN$, for $A \neq \emptyset$. We keep this notation for $A=\emptyset$, so that $e(\emptyset)=1$ (without being an elementary function). In particular,
\begin{multline*}
e([m]) = \per\left((x_{i,j})_{i\in [n], j\in [m]}\right) \\+ \text{monomials involving some variable $x_{i,j}$ with $i>n$.}
\end{multline*}

We see that \eqref{permanent} restates, in the setting of multisymmetric functions, as: $L_{m,n}$ is the coefficient of $\multi{x}_1^{(1,1,\ldots,1)} \multi{x}_2^{(1,1,\ldots, 1)} \cdots \multi{x}_n^{(1,1,\ldots, 1)}$ in the expansion in monomials of $e([m])^n$. We deduce, after Lemma \ref{extraction}, that

\begin{theorem}[\cite{MacMahon:book, Gessel:enumerative}]\label{Latin scalar}
\[
L_{m,n} = \scalar{e([m])^n}{h([m])^n}
\]
\end{theorem}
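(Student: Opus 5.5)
The identity $L_{m,n} = \scalar{e([m])^n}{h([m])^n}$ follows from two facts already available: the MacMahon identity \eqref{permanent} and the coefficient-extraction Lemma \ref{extraction}. First I rewrite $L_{m,n}$ as a coefficient of the multisymmetric function $e([m])^n$. Then I apply Lemma \ref{extraction} with $r=n$ and $\vec{u}_1=\cdots=\vec{u}_n=(1,1,\ldots,1)=\chi([m])$. The product $h_{\vec u_1}\cdots h_{\vec u_n}$ is then exactly $h([m])^n$, where I write $h([m])$ for $h_{\chi([m])}$, by analogy with $e([m])$.

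The first step is to relate $e([m])^n$ to the $n$-th power of the permanent. By definition $e([m])=\sum_f \prod_{j\in[m]} x_{f(j)j}$, summed over all injections $f:[m]\hookrightarrow \NN$. I split this sum according to whether $f$ takes values in $[n]$. The injections into $[n]$ give exactly $\per\left((x_{i,j})_{i\in[n],j\in[m]}\right)$. Every other term contains a variable $x_{i,j}$ with $i>n$. Raising to the $n$-th power, $e([m])^n$ equals the $n$-th power of the permanent plus terms in which every monomial involves some $x_{i,j}$ with $i>n$.

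The target monomial $\multi{x}_1^{(1,\ldots,1)}\cdots\multi{x}_n^{(1,\ldots,1)}=\prod_{i\in[n],j\in[m]}x_{i,j}$ involves only rows $1,\ldots,n$. So its coefficient in $e([m])^n$ equals its coefficient in $\per^n$, which is $L_{m,n}$ by \eqref{permanent}. This relies on the combinatorial justification of \eqref{permanent} given just before it:
\begin{itemize}
\item a sequence of $n$ monomials, one per column, each drawn from the permanent, encodes an array whose columns have no repeated symbol;
\item their product is $\prod x_{i,j}$ exactly when each symbol $i$ occurs exactly once in each row $j$.
\end{itemize}
Distinct arrays give distinct ordered sequences, so the coefficient is indeed $L_{m,n}$.

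The final step invokes Lemma \ref{extraction} with $f=e([m])^n$, which is multisymmetric as a product of multisymmetric functions. This gives the coefficient as $\scalar{e([m])^n}{h([m])^n}$. I expect no real obstacle; the only point needing care is the bookkeeping in the first step, namely checking that no monomial of the correction terms can equal the target monomial. That is immediate, since those monomials contain some row index $i>n$, which the target does not.
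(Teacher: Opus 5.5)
Your proposal is correct and is essentially the paper's own argument: the paper likewise notes that $e([m])$ is the permanent plus monomials involving some $x_{i,j}$ with $i>n$. It then reads \eqref{permanent} as saying that $L_{m,n}$ is the coefficient of $\multi{x}_1^{(1,\ldots,1)}\cdots\multi{x}_n^{(1,\ldots,1)}$ in $e([m])^n$, and applies Lemma \ref{extraction} with all $\vec{u}_i=(1,\ldots,1)$. The only difference is that you spell out a check the paper leaves implicit, namely that the correction terms in $e([m])^n$ cannot contribute to the target monomial.
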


\subsection{The subalgebra of subsets}

Many of the calculations that follow take place in a subalgebra of the algebra of multisymmetric function isomorphic to the free commutative algebra $\FA$ generated by the non-empty subsets of $[m]$.
More precisely, 
we embed $\FA$ into  the algebra of multisymmetric functions by sending $A$ to $p_{\chi(A)}$, where $\chi(A)$ is the characteristic vector of $A$. 
So, for instance, for $m=3$, the sets
$
\set{1}, \set{2}, \set{13}, \set{123}
$, seen as elements of $\mathbb{A}_3$,   are mapped respectively to
$
  p_{100}, p_{010}, p_{10,1} \text{ and } p_{111}. 
$
  From now on, we use $A$ for $p_{\chi(A)}$.

  The scalar product of multisymmetric functions restricts to $\FA$. The monomials in the variables $A$ are orthogonal, and 
  \[
  \scalar{\prod_{A \subset [m]\atop A \neq \emptyset} A^{k_A}}{\prod_{A \subset [m]\atop A \neq \emptyset} A^{k_A}}=
  \prod_{A \subset [m]\atop A \neq \emptyset} \left(\frac{1}{\tau_A}\right)^{k_A} \cdot k_A!
    \]
 since $1/Z_{\chi(A)}=(\#A - 1)!=\tau(A)$.

It is often lighter to use a normalized version of the variables $A$.
Set $\norma{A}=\tau_A \cdot A$. 
Then
  \[
  \scalar{\prod_{A \subset [m]\atop A \neq \emptyset} (\norma{A})^{k_A}}{\prod_{A \subset [m]\atop A \neq \emptyset} (\norma{A})^{k_A}}=
  \prod_{A \subset [m]\atop A \neq \emptyset} \left({\tau_A}\right)^{k_A} \cdot k_A!
    \]
  
\subsection{Proof of Gessel's Formula}\label{sec: proof Gessel}

To prove Gessel's Formula, we start with $L_{m,n}= \scalar{(e([m]))^n}{(h([m]))^r}$ (Theorem \ref{Latin scalar}).
The expansions in power sums  of the functions $e([m])$ and $h([m])$ are known, they were given by Mercedes Rosas.

\begin{theorem}[\cite{Rosas}]\label{Rosas}
  For any subset $B$ of $[m]$,  
  \[
  e(B) = (-1)^{\#B}
  \sum_{\pi \vDash B} \prod_{A\in\pi}   (-\norma{A}),
  \quad
  \text{ and }
  \quad
  h(B) =
  \sum_{\pi \vDash B} \prod_{A\in\pi}  \norma{A}.
  \]
  \end{theorem}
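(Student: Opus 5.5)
We read off the coefficient of a squarefree monomial in the generating series for the $h_\alpha$ and $e_\alpha$ given above, namely
\[
\sum_{\alpha} h_\alpha \multi{t}^\alpha = \exp\Bigl(\sum_{\alpha\neq \multi{0}} \tfrac{p_\alpha}{Z_\alpha}\multi{t}^\alpha\Bigr),
\qquad
\sum_{\alpha} e_\alpha \multi{t}^\alpha = \exp\Bigl(-\sum_{\alpha\neq \multi{0}} \tfrac{p_\alpha}{Z_\alpha}(-\multi{t})^\alpha\Bigr).
\]
By definition, $h(B)=h_{\chi(B)}$ is the coefficient of $\multi{t}^{\chi(B)}=\prod_{j\in B}t_j$, which is squarefree. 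So we may compute modulo the ideal generated by $t_1^2,\ldots,t_m^2$, and also set $t_j=0$ for $j\notin B$. Modulo this ideal, every $\multi{t}^\alpha$ with some $\alpha_j\ge 2$ vanishes. Hence only the terms $\alpha=\chi(A)$ with $\emptyset\neq A\subset B$ survive inside the exponential. Each such term has coefficient $p_{\chi(A)}/Z_{\chi(A)} = \tau_A\, A = \norma{A}$, using $1/Z_{\chi(A)}=(\#A-1)!$.

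Next we apply the exponential formula in this truncated ring. Write $t^A=\prod_{j\in A}t_j$. Then $t^{A_1}\cdots t^{A_k}$ is $0$ unless the $A_i$ are pairwise disjoint, in which case it equals $t^{A_1\cup\cdots\cup A_k}$. Expanding
\[
\exp\Bigl(\sum_{A} \norma{A}\, t^A\Bigr)=\sum_k \frac{1}{k!}\Bigl(\sum_A \norma{A}\, t^A\Bigr)^k,
\]
the coefficient of $t^B$ in the $k$-th term is $\frac1{k!}$ times a sum over ordered $k$-tuples of disjoint nonempty blocks with union $B$. Each set partition of $B$ into $k$ blocks arises from exactly $k!$ such tuples, so the factor $k!$ cancels. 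Since the $\norma{A}$ commute, this gives
\[
h(B)=\sum_{\pi\vDash B}\prod_{A\in\pi}\norma{A}.
\]

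For $e(B)$ we run the same argument with the coefficient $-p_{\chi(A)}(-1)^{\#A}/Z_{\chi(A)} = -(-1)^{\#A}\norma{A}$ attached to $t^A$. This yields $\sum_{\pi\vDash B}\prod_{A\in\pi}\bigl(-(-1)^{\#A}\norma{A}\bigr)$. Because $\sum_{A\in\pi}\#A=\#B$ for every partition $\pi$ of $B$, the signs $(-1)^{\#A}$ multiply to $(-1)^{\#B}$, which gives the stated formula.

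The only delicate point is justifying the truncation. Extracting the coefficient of a squarefree monomial commutes with reduction modulo $(t_j^2)$, because that ideal is spanned by monomials that are not squarefree; and the exponential is a well-defined formal power series in the $t_j$, so the reduction may be taken termwise. The rest is the standard exponential-formula bookkeeping.
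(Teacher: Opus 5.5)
Your proof is correct. The paper gives no proof of this statement; it quotes it from Rosas, where expansions of this kind are obtained through the partition lattice of $B$, so your argument is a genuinely different route.

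In the lattice approach, the monomial multisymmetric functions $m_\sigma$ of multidegree $\chi(B)$ are indexed by set partitions $\sigma\vDash B$. One has $\prod_{A\in\pi}A=\sum_{\sigma\ge\pi}m_\sigma$, with $\sigma$ ranging over partitions coarser than $\pi$. M\"obius inversion then gives $e(B)=m_{\hat 0}=\sum_{\pi}\mu(\hat 0,\pi)\prod_{A\in\pi}A$, where $\mu(\hat 0,\pi)=\prod_{A\in\pi}(-1)^{\#A-1}(\#A-1)!$. After writing $\norma{A}=\tau_A\,A$, this is exactly the coefficient of $\prod_{A\in\pi}A$ in your formula for $e(B)$, and $h(B)$ has the corresponding $|\mu(\hat 0,\pi)|$.

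Your route avoids the lattice entirely. You start from the two exponential generating series the paper already records and pass to $\mathbb{Q}[t_1,\dots,t_m]/(t_1^2,\dots,t_m^2)$. This is legitimate: taking the squarefree part is a ring homomorphism, and only the terms with $k\le\#B$ in $\sum_k S^k/k!$ contribute to the coefficient of $\multi{t}^{\chi(B)}$. You then finish with the exponential formula, where the $k!$ orderings of the blocks cancel the $1/k!$. The identities $1/Z_{\chi(A)}=\tau_A$ and $\prod_{A\in\pi}(-1)^{\#A}=(-1)^{\#B}$ then give both formulas with the right signs.

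Your argument is self-contained: it uses only facts stated in the paper and could replace the citation verbatim. The lattice approach is more general, since it yields the transition matrices between all the classical bases in every multidegree. Your truncation modulo $t_j^2$ is tailored to $0$-$1$ multidegrees, which is all the paper ever uses.
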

(In particular, the $e(B)$ and the $h(B)$ belong to $\FA$.)

  As a consequence,
  \[
   e([m])^n
  = (-1)^{mn}\sum_\nu \binom{n}{\nu} \prod_{\pi\vDash[m]} \prod_{A \in \pi} (-\norma{A})^{\nu_\pi}\\
  \]
    where the sum is over all $n$-multisets $\nu$ of set partitions   of $[m]$. Simplifying,  we get
  \begin{align*}
   e([m])^n
   &= (-1)^{mn} \sum_\nu \binom{n}{\nu}  \prod_{A \subset[m]\atop A\neq \emptyset}  (-\norma{A})^{\vof{\nu}_A}
   .
  \end{align*}
Similarly, 
  \[
  h([m])^n 
  = \sum_\mu \binom{n}{\mu}  \prod_{A \subset[m]\atop A\neq \emptyset}  (\norma{A})^{\vof{\mu}_A}
  .
\]
Now Gessel's Formula follows, using the orthogonality of the monomials.

  \subsection{Extension of Gessel's Formula to partial Latin rectangles}\label{sec: proof Gessel rook}

  We now prove Formula \eqref{sum rp phi}. For this, we simply indicate the modifications that need to be brought to the previous derivation of Gessel's Formula .

  Given a $m\times n$  array, partially filled with numbers in $[r]$, we associate to each of its columns the squarefree monomial with a factor $x_{i,j}$ whenever there is an $i$ in position $j$.
  A column fulfills the   column condition for PLRs (``each symbol appears at most once in each column'') when its monomial is of the form $
  \prod_{j \in A} x_{f(j) j}
  $
    for some $A \subset [m]$ and $f: A \hookrightarrow \NN$ injective. The sum of all such monomials is $ \EE_m:= \sum_{A \subset [m]} e(A)$.

    An array fulfills the row condition for PLRs (``each symbol appears at most once in each row'') if and only the product of its monomials  is squarefree. Finally, the numbers used for filling the arry are all in $[r]$ if and only if the product of the monomials involves only variables $x_{i,j}$ where $j\le r$. The number of non-empty entries in the array is the degree of this product.

    Therefore,
    \[
    \rp{m,n,r}{x} = \killer_r  \left((\EE_m)^n\right)
    \]
    where $\killer_r$ is the operator on series in the $x_{ij}$ that replaces each squarefree monomial of degree $d$ in the $x_{i,j}$ for $j\in [r]$ with $x^d$, and kills all other monomials.

    \begin{lemma}
      Let
      \[
      \HH_m = \sum_{A\subset [m]} h(A) \cdot x^{\#A}.
      \]
      For any multisymmetric function $f$, we have
      $
      \killer_r(f) = \scalar{f}{(\HH_m)^r}
      $.
    \end{lemma}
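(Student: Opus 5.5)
The plan is to reduce the identity to Lemma \ref{extraction} by expanding $(\HH_m)^r$ and matching its terms, one by one, with the squarefree monomials that $\killer_r$ keeps. Both sides are linear in $f$, and $\killer_r$ acts monomial by monomial. So it is enough to show that, for every multisymmetric $f$, the scalar product $\scalar{f}{(\HH_m)^r}$ equals the sum, over the squarefree monomials $M$ in the variables $x_{i,j}$ with $i\in[r]$ and $j\in[m]$, of $x^{\deg M}$ times the coefficient of $M$ in $f$. Here I read the condition ``$j\le r$'' in the definition of $\killer_r$ as a condition on the symbol index $i$, so that the symbols used are those in $[r]$; this is what the construction of $\EE_m$ requires.

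First I expand the power:
\[
(\HH_m)^r=\sum_{(A_1,\ldots,A_r)} x^{\#A_1+\cdots+\#A_r}\, h(A_1)h(A_2)\cdots h(A_r),
\]
where the sum runs over all $r$-tuples of subsets of $[m]$, with empty subsets allowed and $h(\emptyset)=1$. Since $h(A)=h_{\chi(A)}$, Lemma \ref{extraction} applied with $\vec u_k=\chi(A_k)$ gives
\[
\scalar{f}{h(A_1)\cdots h(A_r)} = [\multi{x}_1^{\chi(A_1)}\cdots \multi{x}_r^{\chi(A_r)}]\, f .
\]

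The one technical point is the case where some $A_k$ are empty, because then the corresponding $h$ factors are $1$ rather than genuine complete sums. Omitting these factors, the lemma gives the coefficient of a monomial in the first $r'$ rows of variables, where $r'$ is the number of non-empty $A_k$. That monomial differs from $\multi{x}_1^{\chi(A_1)}\cdots \multi{x}_r^{\chi(A_r)}$ only by a permutation of the rows $\multi{x}_i$. Since $f$ is invariant under permutations of finitely many rows, the two coefficients coincide, so the displayed formula holds in all cases. I expect this bookkeeping to be the only delicate step.

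It remains to observe that the map $(A_1,\ldots,A_r)\mapsto \prod_{k=1}^r \multi{x}_k^{\chi(A_k)}=\prod_{k}\prod_{j\in A_k}x_{k,j}$ is a bijection. Its source is the set of $r$-tuples of subsets of $[m]$; its target is the set of squarefree monomials in the variables $x_{i,j}$ with $i\in[r]$ and $j\in[m]$. The inverse reads off $A_k=\{j : x_{k,j}\mid M\}$. Under this bijection, the degree of the monomial is $\sum_k\#A_k$, which is exactly the exponent of $x$ attached to the tuple. Summing over all tuples therefore gives $\sum_M x^{\deg M}[M]f=\killer_r(f)$, and all other monomials of $f$ receive weight zero, as $\killer_r$ requires. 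Finally, both sides are finite sums in each homogeneous degree, so no convergence issue arises.
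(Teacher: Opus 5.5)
Your proposal is correct and is essentially the paper's argument: the paper disposes of this lemma with ``This follows from Lemma~\ref{extraction}'', and you supply the details, namely expanding $(\HH_m)^r$ over $r$-tuples of subsets of $[m]$, the degree-preserving bijection with squarefree monomials in the variables $x_{i,j}$, $i\in[r]$, and the case of empty $A_k$ via row-permutation invariance. You are also right to read the condition in $\killer_r$ as a restriction on the symbol index $i\le r$; the paper's ``$j\le r$'' is a typo.
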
  
    \begin{proof}
      This follows from Lemma \ref{extraction}.
    \end{proof}  

    We can conclude:
    \begin{theorem}\label{rook scalar}
    \[
    \rp{m,n,r}{x}
    = \scalar{(\EE_m)^n}{(\HH_m)^r}.
    \]
    \end{theorem}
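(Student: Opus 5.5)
The identity follows by combining the expression $\rp{m,n,r}{x} = \killer_r\left((\EE_m)^n\right)$ with the preceding lemma, $\killer_r(f) = \scalar{f}{(\HH_m)^r}$, applied to $f = (\EE_m)^n$. The only work is to check that the ingredients are legitimate. The combinatorial description of $\rp{m,n,r}{x}$ as $\killer_r((\EE_m)^n)$ comes from the column encoding, and it implicitly uses the convention that the symbol index is the first subscript $i$ of $x_{i,j}$. So we must make sure $\killer_r$ keeps exactly the squarefree monomials in variables $x_{i,j}$ with $i \in [r]$ (symbols), $j\in[m]$ (rows). The text writes ``$j\le r$'', which should be read as the symbol index being at most $r$.

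First we verify the combinatorial identity. Expanding $(\EE_m)^n = \prod_{k=1}^n \sum_{A\subset[m]} e(A)$, each term is a sequence $(M_1,\dots,M_n)$, where $M_k = \prod_{j\in A_k} x_{f_k(j),j}$ with $f_k$ injective. Such a sequence corresponds bijectively to an $m\times n$ array whose column $k$ has symbol $f_k(j)$ in row $j$ for $j\in A_k$ and is empty elsewhere. Injectivity of each $f_k$ is exactly the column condition. The product $M_1\cdots M_n$ is squarefree iff no pair $(i,j)$ repeats across columns, which is the row condition. Its degree is the number of filled cells. Hence $\killer_r$ sends the term to $x^{d}$ exactly when the array is a PLR with symbols in $[r]$ and $d$ filled cells, and kills it otherwise. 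Summing gives $\rp{m,n,r}{x}$.

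Second we verify the lemma via Lemma~\ref{extraction}. A squarefree monomial in $x_{i,j}$ with $i\in[r]$ can be written as $\multi{x}_1^{\chi(A_1)}\cdots\multi{x}_r^{\chi(A_r)}$ with $A_i\subset[m]$. By Lemma~\ref{extraction} its coefficient in a multisymmetric $f$ is $\scalar{f}{h(A_1)\cdots h(A_r)}$, where the convention $h(\emptyset)=1$ handles empty rows of the matrix. Since $\killer_r$ assigns this monomial the weight $x^{\sum\#A_i}$, linearity gives
\[
\killer_r(f)=\sum_{A_1,\dots,A_r\subset[m]} x^{\sum_i \#A_i}\scalar{f}{\prod_i h(A_i)}=\scalar{f}{\Big(\sum_{A\subset[m]}x^{\#A}h(A)\Big)^r},
\]
extending the scalar product $\mathbb{Q}[x]$-bilinearly. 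The remaining point is that $\killer_r(f)$ depends only on the coefficients of these particular monomials. This holds because $f$ is multisymmetric, so its coefficient on any monomial supported in rows $i\le r$ is the coefficient of the corresponding sorted monomial, which is what Lemma~\ref{extraction} computes.

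The main obstacle is only this bookkeeping: consistently interpreting indices (symbols versus rows, and the cutoff $r$), and making sure Lemma~\ref{extraction} applies with some $\vec u_i=\multi{0}$. Lemma~\ref{extraction} is stated for arbitrary vectors in $\NN^m$, and $h_{\multi 0}=1$ by convention, so zero vectors cause no difficulty. With $f=(\EE_m)^n$, which is multisymmetric since each $e(A)$ is, the theorem follows.
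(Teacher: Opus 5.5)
Your proposal is correct and follows the paper's proof: the paper also derives $\rp{m,n,r}{x}=\killer_r\left((\EE_m)^n\right)$ from the column encoding, then applies the lemma $\killer_r(f)=\scalar{f}{(\HH_m)^r}$, which it obtains directly from Lemma~\ref{extraction}. You additionally spell out what the paper leaves implicit, namely the bijection with arrays and the expansion of $(\HH_m)^r$ over ordered tuples $(A_1,\dots,A_r)$ with $h(\emptyset)=1$, and you are right that the paper's condition ``$j\le r$'' should be read as a bound on the symbol index $i$.
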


    After Theorem \ref{Rosas}:
    \[
     \EE_m =  \sum_{A \subset [m]} \sum_{\rho \vDash A} \prod_{B \in \rho} (-1)^{\#B} \cdot (-\norma{B}).
    \]
    But the pairs $(A,\rho)$ such that $A\subset [m]$ and $\rho \vDash A$ are in bijection with the partitions $\pi \vDash [m+1]$:  if $\pi=\{B_0, B_1, \ldots, B_k\}$ where $B_0$ is the block  containing $m+1$, the corresponding pairs $(A, \rho)$ is with $A=[m+1] \setminus B_0$ and $\rho=\{B_1, \ldots B_k\}$.

    So
    \[
    \EE_m =  \sum_{\pi \vDash [m+1]} \prod_{B \in \pi\atop
    \text{s.t.} m+1 \not \in B} (-1)^{\#B}  \cdot (-\norma{B}).
    \]

    Similarly,
    \[
    \HH_m = \sum_{\pi \vDash [m+1]} \prod_{B \in \pi\atop
    \text{s.t.} m+1 \not \in B}  x^{\# B} \cdot \norma{B}.
    \]
    Now, Formula \eqref{sum rp phi} is derived by expanding the powers in the scalar product.

    \subsection{Proofs of the formulas for $g_2$ and $g_3$}\label{proofs g2 g3}

    \subsubsection{The series $g_m$ as a scalar product}
    
    We now turn our attention to the generating series.

    \begin{proposition}
      For all $m>0$, $ g_m = \scalar{\exp(z\EE_m)}{\exp(t\HH_m)}$.
    \end{proposition}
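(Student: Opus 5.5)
The plan is to expand both exponentials as power series and use bilinearity of the scalar product, extended coefficientwise to series in $z,t$ (and polynomials in $x$). We write
\[
\exp(z\EE_m)=\sum_{n\ge 0}(\EE_m)^n\frac{z^n}{n!},\qquad \exp(t\HH_m)=\sum_{r\ge 0}(\HH_m)^r\frac{t^r}{r!},
\]
so that formally
\[
\scalar{\exp(z\EE_m)}{\exp(t\HH_m)}=\sum_{n,r}\scalar{(\EE_m)^n}{(\HH_m)^r}\frac{z^n}{n!}\frac{t^r}{r!}.
\]
By Theorem \ref{rook scalar}, each coefficient equals $\rp{m,n,r}{x}$, which gives exactly $g_m$.

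The one point needing care is that the scalar product is defined on multisymmetric functions with finitely many homogeneous components, not on infinite sums. So we interpret $\scalar{\cdot}{\cdot}$ on series in $z,t$ as extended coefficientwise. We then check that the coefficient of $z^nt^r$ on the left is a finite sum. For this we work inside $\FA$, in the basis of monomials in the $\norma{A}$ (orthogonal by the formula above). By Theorem \ref{Rosas}, $\EE_m$ and $\HH_m$ are polynomials in the $\norma{A}$. Hence $(\EE_m)^n$ and $(\HH_m)^r$ are finite combinations of such monomials, and only finitely many monomials pair nontrivially. Each coefficient of $z^nt^r$ is therefore a well-defined polynomial in $x$.

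Equivalently, one can argue by degrees. The multidegree of each term of $(\EE_m)^n$ lies in $\{0,\dots,n\}^m$, and the scalar product pairs only equal multidegrees. The pairing of the two exponentials is then a finite computation in each bidegree $(n,r)$. This justification of the interchange is the only step that is not immediate; the rest is a direct application of Theorem \ref{rook scalar}.
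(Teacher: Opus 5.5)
Your proposal is correct and follows the paper's proof: expand both exponentials, pair coefficientwise in $z$ and $t$, and use Theorem~\ref{rook scalar} to identify the coefficient of $\frac{z^n}{n!}\frac{t^r}{r!}$ with $\rp{m,n,r}{x}$. Your extra remark, reading the scalar product coefficientwise so that each coefficient is a finite pairing of polynomials in the $\norma{A}$, soundly justifies a step the paper leaves implicit.
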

    \begin{proof}
    Start with $\rp{m,n,r}{x} = \scalar{(\EE_m)^n}{(\HH_m)^r}$.
    Multiply with  $\frac{z^n}{n!} \frac{t^r}{r!}$ and sum over all $n$ and $r$.
\end{proof}    

In the calculations that follow, $m$ is fixed and we drop the indices for $\EE_m$ and $\HH_m$, writing for them just $\EE$ and $\HH$. 
    
\subsubsection{Formula for $g_1$}

Let us consider, as a warm-up,  the formula for $g_1$.

In this case,
$\EE = 1 + \set{1}$ and $\HH = 1 + x \set{1}$.
We have:
\begin{equation}\label{exp and g1}
  g_1 = \scalar{  \exp\left(z(1+\set{1})\right)     }{      \exp\left(t(1+x \set{1})\right) }.
\end{equation}

\begin{lemma}\label{rule exp}
  Let $A \subset [m]$, non-empty, and let  $\alpha$ and $\beta$ be scalars. Then
  \[
  \scalar{e^{\alpha A}}{e^{\beta A}} = \exp\left(\frac{\alpha \beta}{\tau_A}\right).
  \]
\end{lemma}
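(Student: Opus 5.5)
We expand both exponentials as power series in the single variable $A$ and use the orthogonality of the monomials of $\FA$ established above. By bilinearity of the scalar product (extended coefficientwise to formal power series in $\alpha,\beta$),
\[
\scalar{e^{\alpha A}}{e^{\beta A}} = \sum_{k,\ell\ge 0} \frac{\alpha^k}{k!}\frac{\beta^\ell}{\ell!}\scalar{A^k}{A^\ell}.
\]

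By orthogonality, only the diagonal terms $k=\ell$ survive. The norm formula for monomials in $\FA$, applied to the monomial $A^k$ (all other exponents zero), gives
\[
\scalar{A^k}{A^k}=\left(\frac{1}{\tau_A}\right)^k k!.
\]
Therefore
\[
\scalar{e^{\alpha A}}{e^{\beta A}}=\sum_{k\ge0}\frac{(\alpha\beta)^k}{k!^2}\,\frac{k!}{\tau_A^k}=\sum_{k\ge0}\frac{1}{k!}\left(\frac{\alpha\beta}{\tau_A}\right)^k=\exp\left(\frac{\alpha\beta}{\tau_A}\right).
\]

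The only point needing care is the legitimacy of exchanging the scalar product with the infinite sums. Since $A^k$ is homogeneous of multidegree $k\chi(A)$ and distinct homogeneous components are orthogonal, the scalar product of two such series is computed degree by degree. Each degree contributes a finite sum, so the identity holds as formal power series in $\alpha,\beta$; if $\alpha,\beta$ are treated as scalars, the series converges absolutely. This is the same convention already used implicitly in the proposition $g_m=\scalar{\exp(z\EE_m)}{\exp(t\HH_m)}$, so nothing new is required, and the main step is simply the orthogonality computation.
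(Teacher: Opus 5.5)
Your proposal is correct and follows the same route as the paper: you expand both exponentials, keep only the diagonal terms by orthogonality of the $A^k$, use $\scalar{A^k}{A^k}=k!/\tau_A^k$, and resum. Your remark on exchanging the scalar product with the infinite sums, working degree by degree or as formal power series in $\alpha,\beta$, is a harmless addition that the paper leaves implicit.
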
  
\begin{proof}
  Immediate, expanding the exponentials and using the orthogonality of the $A^k$.  
\end{proof}  

Starting from \eqref{exp and g1},
we have:
\[  g_1
  =
  \scalar{e^z e^{z\set{1}}}{e^t e^{t x \set{1}}}
  = e^z e^t \scalar{e^{z\set{1}}}{e^{t x \set{1}}}.
\]
Applying Lemma \ref{rule exp}, we deduce 
\[ g_1
  =
  e^z e^t e^{ztx},
  \]
  which is Formula \eqref{g1}.


We take note, in view of a later use, that, as a consequence of \eqref{exp and g1},
\begin{equation}\label{R1}
  \rp{1,n,r}{x} = \scalar{(1+\set{1})^n}{(1+x\set{1})^r}.
\end{equation}


\subsubsection{Formula for $g_2$}\label{sec: proof g2}

We start with stating two rules for separating variables. 
\begin{lemma}\label{separation 1}
  Let $X$ and $Y$ be two disjoint sets of power sums. Let $P_1$ and  $P_2$ be polynomials in the power sums in $X$, and let $Q_1$ and $Q_2$ be polynomials in the power sums in $Y$. Then:
  \[
   \scalar{P_1 Q_1}{P_2 Q_2}
  =
  \scalar{P_1}{P_2}
  \cdot
  \scalar{Q_1}{Q_2}.
  \]
\end{lemma}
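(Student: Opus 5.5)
By bilinearity, I will reduce the statement to the case of monomials and then apply the orthogonality relation \eqref{eq:orthogonality}. I read "power sums" here as the generators $p_\alpha$ ($\alpha\neq \multi{0}$) of the algebra of multisymmetric functions, which is free on them; $X$ and $Y$ are disjoint sets of these generators.

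\textbf{Reduction to monomials.} Both sides of the identity are bilinear in the pair $(P_1,P_2)$ and also in the pair $(Q_1,Q_2)$. It therefore suffices to treat monomials
\[
P_1=\prod_{\alpha\in X}p_\alpha^{k_\alpha},\quad P_2=\prod_{\alpha\in X}p_\alpha^{\ell_\alpha},\quad Q_1=\prod_{\beta\in Y}p_\beta^{k'_\beta},\quad Q_2=\prod_{\beta\in Y}p_\beta^{\ell'_\beta}.
\]
Because $X\cap Y=\emptyset$, the product $P_1Q_1$ is itself a monomial in the power sums, with exponent $k_\alpha$ on $\alpha\in X$, exponent $k'_\beta$ on $\beta\in Y$, and exponent $0$ elsewhere. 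The same holds for $P_2Q_2$.

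\textbf{Applying orthogonality.} By \eqref{eq:orthogonality}, $\scalar{P_1Q_1}{P_2Q_2}$ is nonzero exactly when the two exponent vectors agree. Since the index sets are disjoint, this happens exactly when $k_\alpha=\ell_\alpha$ for all $\alpha\in X$ and $k'_\beta=\ell'_\beta$ for all $\beta\in Y$. In that case the value is the product over all indices of $\scalar{p_\gamma^{k_\gamma}}{p_\gamma^{k_\gamma}}$, where indices with exponent $0$ contribute the factor $1$.

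This product splits into a product over $X$ times a product over $Y$. By \eqref{eq:orthogonality} again, these two factors are exactly $\scalar{P_1}{P_2}$ and $\scalar{Q_1}{Q_2}$. When the exponent vectors do not agree, at least one of the two factors on the right-hand side vanishes, so both sides are $0$.

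\textbf{Main point of care.} The only step needing attention is the use of disjointness of $X$ and $Y$. It guarantees that the exponents of $P_1$ and $Q_1$ never combine on a common power sum, so the exponent vector of $P_1Q_1$ is just their concatenation. Without disjointness the factorization fails, for example $\scalar{p_\alpha\cdot 1}{1\cdot p_\alpha}\neq 0$. The same argument extends verbatim to formal power series, such as the exponentials used later in the paper, by expanding term by term in each homogeneous degree.
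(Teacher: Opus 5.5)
Your proof is correct and is precisely the argument the paper leaves implicit when it calls the lemma ``Immediate''. Multilinearity reduces to power-sum monomials, disjointness of $X$ and $Y$ makes the exponent vector of $P_1Q_1$ (resp.\ $P_2Q_2$) the concatenation of those of $P_1$ and $Q_1$ (resp.\ $P_2$ and $Q_2$), and the multiplicative orthogonality of power-sum monomials then splits the scalar product into its $X$-part and $Y$-part, with your coefficientwise extension in $z$ and $t$ being exactly what the later applications to exponentials in the $g_2$ and $g_3$ computations require.
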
  
\begin{proof}
  Immediate.
\end{proof}  

\begin{lemma}\label{separation}
  Let $X$ and $Y$ be two disjoint sets of power sums. Let $F_1, F_2, P_1$ and  $P_2$ be polynomials in the power sums in $X$, and let $G_1, G_2, Q_1$ and $Q_2$ be polynomials in the power sums in $Y$, each independent from $z$ and $t$, we have
  \[
  \scalar{P_1 Q_1 \cdot e^{z F_1 G_1}}{P_2 Q_2 \cdot e^{t F_2 G_2}}
  =
  \scalar{P_1 \cdot e^{z F_1}}{P_2 \cdot e^{t F_2}}
  \Hadamard
  \scalar{Q_1 \cdot e^{z G_1}}{Q_ 2 \cdot e^{t G_2}}.
  \]
\end{lemma}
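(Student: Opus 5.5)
The plan is to expand both exponentials in powers of $z$ and $t$, so that the claimed identity becomes a coefficientwise statement, and then apply Lemma \ref{separation 1} coefficient by coefficient.

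First, write
\[
e^{zF_1G_1}=\sum_{n}\frac{z^n}{n!}F_1^nG_1^n,
\qquad
e^{tF_2G_2}=\sum_{r}\frac{t^r}{r!}F_2^rG_2^r.
\]
The scalar product is bilinear and $z,t$ are treated as scalars. So the left-hand side equals
\[
\sum_{n,r}\frac{z^n}{n!}\frac{t^r}{r!}\scalar{(P_1F_1^n)(Q_1G_1^n)}{(P_2F_2^r)(Q_2G_2^r)}.
\]
Here $P_1F_1^n$ and $P_2F_2^r$ are polynomials in the power sums of $X$. Likewise $Q_1G_1^n$ and $Q_2G_2^r$ are polynomials in the power sums of $Y$. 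Lemma \ref{separation 1} therefore factors each coefficient as
\[
\scalar{P_1F_1^n}{P_2F_2^r}\cdot\scalar{Q_1G_1^n}{Q_2G_2^r}.
\]

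Second, expand the two factors on the right-hand side in the same way:
\[
\scalar{P_1e^{zF_1}}{P_2e^{tF_2}}=\sum_{n,r}\scalar{P_1F_1^n}{P_2F_2^r}\frac{z^n}{n!}\frac{t^r}{r!},
\]
and similarly for the $Q,G$ factor. By the definition of the Hadamard product for exponential series in $z$ and $t$, the coefficient of $\frac{z^n}{n!}\frac{t^r}{r!}$ in their Hadamard product is exactly the product of the two scalar products above. This matches the left-hand side term by term, which proves the identity.

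The only point needing care is the justification for exchanging the infinite sums with the scalar product. Since the series are formal in $z$ and $t$, I would define the scalar product of such series coefficientwise, as is implicitly done in the Proposition expressing $g_m$ as a scalar product. Under that definition the exchange holds by definition, and each coefficient involves only polynomials, to which Lemma \ref{separation 1} applies directly. I expect no real obstacle beyond stating this convention.
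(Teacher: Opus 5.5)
Your proposal is correct and is the same argument as the paper, whose proof is just ``Immediate, expanding the exponentials.'' You expand both exponentials, factor each coefficient of $\frac{z^n}{n!}\frac{t^r}{r!}$ with Lemma~\ref{separation 1}, and match the result against the definition of $\Hadamard$; taking the scalar product of series in $z,t$ coefficientwise is the same convention the paper uses implicitly when it writes $g_m$ as a scalar product.
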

\begin{proof}
  Immediate, expanding the exponentials.
\end{proof}  

We now prove the formula for $g_2$.
We have
$
g_2=\scalar{e^{z\EE}}{e^{t\HH}}
$,
 with
\[
\EE = (1+ \set{1}) (1+ \set{2}) - \set{12},
\qquad
\HH = (1+ x\set{1}) (1+ x\set{2}) + x^2\set{12}.
\]
Applying Lemma \ref{separation 1}, we see that
\[
g_2 =
\scalar{e^{(z (1+ \set{1}) (1+ \set{2})}}{e^{t  (1+ x\set{1}) (1+ x\set{2}}}
\cdot 
\scalar{e^{-z \set{12})}}{e^{tx^2 \set{12}}}.
\]
For the second factor, we apply Lemma \ref{rule exp}, and get 
\[
\scalar{e^{-z \set{12}}}{e^{tx^2 \set{12}}} = \exp(-ztx^2).
\]
The first factor is evaluated making use of Lemma \ref{separation}:
\begin{multline*}
\scalar{e^{z (1+ \set{1}) (1+ \set{2})}}{e^{t  (1+ x\set{1}) (1+ x\set{2})}}\\
=
\scalar{e^{z (1+ \set{1})}} {e^{t  (1+ x\set{1})}}
\Hadamard
\scalar{e^{z (1+ \set{2})}} {e^{t  (1+ x\set{2})}}.
\end{multline*}
After \eqref{exp and g1},
\[
\scalar{e^{z (1+ \set{1})}} {e^{t  (1+ x\set{1})}} = g_1,
\]
and the same holds for $\scalar{e^{z (1+ \set{2})}} {e^{t  (1+ x\set{2})}}$.
Whence,
\[
g_2 =\left(g_1 \Hadamard g_1\right) \cdot  \exp(-ztx^2) .
\]


\subsubsection{Formula for $g_3$}\label{sec: proof g3}

We have $g_3 = \scalar{e^{z\EE}}{e^{t\HH}}$, with
\begin{multline*}
\EE = 
(1+ \set{1}) (1+ \set{2})  (1+ \set{3}) \\
- (1+\set{1})\set{23}
- (1+\set{2})\set{13}
- (1+\set{3})\set{12}
+2 \set{123}
,
\end{multline*}
and
\begin{multline*}
\HH = (1+ x\set{1}) (1+ x\set{2})  (1+ x\set{3})\\
+x^2 (1+x\set{1})\set{23}
+x^2 (1+x\set{2})\set{13}
+x^2 (1+x\set{3})\set{12}
+2 x^3 \set{123}.
\end{multline*}
Separating the variable $\set{123}$ from the others,
we decompose $\EE$ and $\HH$ into
\[
\EE = \EE'  +2 \set{123},
\qquad
\HH = \HH'  +2 x^3 \set{123} 
\]
to  get that
\[
g_3 = \scalar{e^{z\EE' }}{e^{t\HH' }} \scalar{e^{-2z \set{123}}}{e^{2tx^3 \set{123}}}.
\]
We apply Lemma \ref{rule exp}. Since $|\tau_{123}|=2$, 
we get:
\[
\scalar{e^{-2z \set{123}}}{e^{2tx^3 \set{123}}}
=
e^{-2ztx^3}.
\]
To simplify the other factor, consider
\[
\EE'' = (1+ \set{1}) (1+ \set{2})  (1+ \set{3}),
\HH'' =  (1+ x\set{1}) (1+ x\set{2})  (1+ x\set{3})
,
\]
and observe that $\EE' $ and $\HH' $ are obtained from $\EE'' $ and $\HH''$ by adding to them
$ - (1+\set{i})\set{jk}$  and $x^2 (1+x\set{i})\set{jk}$ respectively,
first with $\set{i}=\set{1}$ and $\set{jk}=\set{23}$,
then again with
$\set{i}=\set{2}$ and $\set{jk}=\set{13}$,
then once more with
$\set{i}=\set{3}$ and $\set{jk}=\set{12}$.

We will make use of the following rule to deal with these operations.
\begin{lemma}\label{rule 3}
  Let $i, j, k$ be three distinct elements of $[m]$, and let $F$ and $G$ be elements of $\FA$, both free from the variable $\set{ij}$. Then:
  \[
  \scalar{F e^{-z(1+\set{k}) \set{ij}}}{G e^{tx^2(1+x\set{k})\set{ij }}}
    =
    \sum_a \frac{y^a}{a!} \scalar{F \cdot (1+\set{k})^a}{G\cdot (1+x \set{k})^a},
  \]
  where $y=-ztx^2$.
\end{lemma}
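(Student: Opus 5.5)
We expand both exponentials in powers of the variable $\set{ij}$ and use the orthogonality of the monomials in the variables $A$ to pair off equal powers of $\set{ij}$. Write
\[
e^{-z(1+\set{k})\set{ij}} = \sum_a \frac{(-z)^a}{a!}(1+\set{k})^a \set{ij}^a,
\qquad
e^{tx^2(1+x\set{k})\set{ij}} = \sum_b \frac{(tx^2)^b}{b!}(1+x\set{k})^b \set{ij}^b .
\]
Since $F$, $G$, $(1+\set{k})^a$ and $(1+x\set{k})^b$ are all free of $\set{ij}$, we can apply Lemma \ref{separation 1} with $X=\{\set{ij}\}$ and $Y$ the set of the remaining variables. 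This gives
\[
\scalar{F(1+\set{k})^a\set{ij}^a}{G(1+x\set{k})^b\set{ij}^b}
=\scalar{F(1+\set{k})^a}{G(1+x\set{k})^b}\cdot\scalar{\set{ij}^a}{\set{ij}^b}.
\]

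By the orthogonality relation for the subalgebra $\FA$, the factor $\scalar{\set{ij}^a}{\set{ij}^b}$ vanishes unless $a=b$. When $a=b$ it equals $(1/\tau_{\set{ij}})^a\, a!$. Since $\tau_{\set{ij}}=(2-1)!=1$, this is simply $a!$. This is the step that explains why the rule is special to two-element subsets: for larger $A$ a factor $\tau_A^{-a}$ would survive and enter the definition of $y$.

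Collecting the surviving diagonal terms, the coefficient of the term indexed by $a$ is
\[
\frac{(-z)^a (tx^2)^a}{a!\,a!}\cdot a! = \frac{(-ztx^2)^a}{a!} = \frac{y^a}{a!}.
\]
Summing over $a$ gives the claimed identity.

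The only point needing care is the formal justification of exchanging the infinite sums with the bilinear scalar product. The scalar product is extended coefficientwise in $z,t$ as formal power series. For each fixed monomial $z^pt^q$ only finitely many pairs $(a,b)$ contribute, namely those with $a\le p$ and $b\le q$. So the computation is carried out coefficient by coefficient, and the interchange is legitimate.
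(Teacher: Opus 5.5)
Your proof is correct. The paper states this lemma without proof, but your argument is exactly the ``expand the exponentials and use orthogonality'' reasoning it gives for Lemmas \ref{rule exp} and \ref{separation}: expand both exponentials in powers of $\set{ij}$, split off $\scalar{\set{ij}^a}{\set{ij}^b}$ with Lemma \ref{separation 1}, and use $\scalar{\set{ij}^a}{\set{ij}^b}=\delta_{ab}\,a!$ since $\tau_{\set{ij}}=1$.
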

We apply  the rule three times, and get
\[
g_3 = e^{-2ztx^3} \cdot 
\sum_{a,b,c} \frac{y^{a+b+c}}{a!b!c!} X_{a,b,c}
,
\]
  where $X_{a,b,c}$ is the scalar product of
\[
(1+\set{1})^a(1+\set{2})^b(1+\set{3})^c \cdot e^{z \EE'' }
\]
with
\[
(1+x\set{1})^a(1+x\set{2})^b(1+x\set{3})^c \cdot e^{t \HH''}
.
\]
Applying Lemma \ref{separation}, we get
\[
X_{a, b, c} = \phi_a \Hadamard \phi_b \Hadamard \phi_c
,
\]
where
\[
\phi_k = \scalar{(1+\set{1})^k e^{z(1+\set{1})}}{(1+x\set{1})^k e^{t(1+x\set{1})}}
.
\]
Therefore
\[
g_3 = e^{-2ztx^3} \cdot  
\left( \sum_c \phi_c \frac{y^c}{c!}\right)
\Hadamard
\left( \sum_b \phi_a \frac{y^b}{b!}\right)
\Hadamard
\left( \sum_a \phi_a \frac{y^a}{a!}\right)
.
\]
Expanding the exponentials in $\phi_k$ yields:
\[
\phi_k = \sum_{n, r} \frac{z^n}{n!} \frac{t^r}{r!} \scalar{(1+\set{1})^{k+n}}{(1+x\set{1})^{k+r}}
.
\]
Comparing with Formula   \eqref{R1}, that says 
\[
\scalar{(1+\set{1})^i}{(1+x\set{1})^j}
=\rp{1,i,j}{x},
\]
we discover that
\[
\phi_k
=
\sum_{n, r} \frac{z^n}{n!} \frac{t^r}{r!}
\rp{1,k+n,k+r}{x}
.
\]
This yields Formula \eqref{g3}.


\section{Final remarks}

\finalremark The complexity of calculating the series  $g_m$  seems to  increase notably with $m$. It would be very interesting  to check what happens for $g_4$, and whether or not one arrives to  short expressions, as it is the case for $g_2$ and $g_3$. 

\finalremark After stating that $L_{m,n}$ can be obtained by extracting one coefficient from    power of the permanent, MacMahon simplified the calculation by considering multisymmetric functions, i.e. by making use of the symmetries with respect to the row permutations of the matrix of the $x_{i,j}$. The other symmetry, with respect to the column permutations, has not been exploited, and has rarely been evoked (one exception is \cite{Jucys}). Can better formulas  be obtained by considering the multisymmetric functions that are invariant under  this other symmetric group action?  

\finalremark The 2D rook polynomials have a well-studied $q$-deformation ($q$-rook polynomials). Could considering such $q$-deformations for partial Latin rectangles  contribute to a better understanding of some divisibility properties of the numbers of Latin rectangles?

\finalremark  \label{rem:Doyle} \emph{Doyle's Formulas} \cite{Doyle,DeGennaro,StonesComputing} are other formulas for counting Latin rectangles. They are actually the best known formulas for effective calculations of $L_{m,n}$ for $m$ and $n$ as big as possible \cite{StonesComputing}. These  formulas  do not generalize to PLRs. Indeed, they are based on  restating the row conditions for Latin rectangles (``every symbol appears \emph{exactly} once in every row'') as
 ``every symbol appears \emph{at least} once in every row'', 
which is equivalent to it in the context of Latin rectangles, that have as many places to fill in as symbols. This is, of course, no longer the case for PLRs.
See Appendix \ref{sec:Doyle} for a derivation of Doyle's Formulas in the setting of multisymmetric functions.

\finalremark Our work does not address the fundamental weakness of Gessel's and Doyle's Formulas (and of our generalizations): these are summations over very large sets, which, when $m$ increases, become bigger than the number of Latin rectangles, so that carrying over the summation becomes more complicated than just listing  the Latin rectangles \cite{Wilf, Zeilberger}. One can check that some summands are much bigger than the final result, but cancel out with summands of opposite signs.
Would it be possible to get simplified  formulas with the cancellations already performed? The presentation of the domain of summations as sets of the lattice points of polytopes might  help. 

\printbibliography

@unpublished{Notebook,
  author = {Briand, Emmanuel},
  title  = {Generating series for  3{D} rook placements on a  $m \times n \times r$ chessboard for small $m$},
  note = {Sagemath Notebook},
  year = {2026},
  url = {https://github.com/EmmanuelJeanBriand/3DRookPlacements/blob/main/3D_rook_placements.ipynb}
}

@InProceedings{Briand:extendedabstract,
  author = 	 {Briand, Emmanuel},
  title = 	 {Tridimensional rook placements and partial {L}atin rectangles through multisymmetric functions},
  date = 	 {2026},
  eventtitle = {Discrete Mathematics Days 2026},
  note = 	 {To appear},
  series = {Research Perspectives},
  publisher = {Birkhauser},
}

@manual{Sagemath,
    label        = {Sage},
    author       = {{The Sage Developers}},
    title        = {{S}age{M}ath, the {S}age {M}athematics {S}oftware {S}ystem},
    url          = {https://www.sagemath.org},
    version      = {10.7},
    year         = {2026},
}

@article{FalconStones2015,
title = {Classifying partial Latin rectangles},
author = {Falcón, Raúl M. and Stones, Rebecca J.},
journal = {Electronic Notes in Discrete Mathematics},
volume = {49},
pages = {765-771},
year = {2015},
note = {The Eight European Conference on Combinatorics, Graph Theory and Applications, EuroComb 2015},
doi = {10.1016/j.endm.2015.06.103},
}

@article {FalconStones2020,
    AUTHOR = {Falc{\'o}n, Ra{\'u}l M. and Stones, Rebecca J.},
     TITLE = {Enumerating partial {L}atin rectangles},
   JOURNAL = {Electron. J. Combin.},
  FJOURNAL = {Electronic Journal of Combinatorics},
    VOLUME = {27},
      YEAR = {2020},
    NUMBER = {2},
     PAGES = {Paper No. 2.47, 41},
       DOI = {10.37236/9093},
}

@article{Gessel:enumerative,
  author = 	 {Gessel, Ira},
  title = 	 {Enumerative Applications of symmetric functions},
  date = 	 {1987},
  journal = {S{\'e}minaire Lotharingien de Combinatoire},
  volume = 	 {17},
  pages = 	 {5-21},
  url = 	 {https://www.mat.univie.ac.at/~slc/opapers/s17gessel.html},
}

@book{MacMahon:book,
 author = {MacMahon, Percy A.},
 title = {Combinatory analysis. {Vols}. {I}, {II} (bound in one volume)},
 edition = {Reprint of {An} introduction to combinatory analysis (1920) and {Combinatory} analysis. {Vol}. {I}, {II} (1915, 1916)},
 year = {2004},
 publisher = {Mineola, NY: Dover Publications},
 language = {English},
}

@article {Rosas,
    AUTHOR = {Rosas, Mercedes H.},
     TITLE = {Mac{M}ahon symmetric functions, the partition lattice, and
              {Y}oung subgroups},
   JOURNAL = {J. Combin. Theory Ser. A},
  FJOURNAL = {Journal of Combinatorial Theory. Series A},
    VOLUME = {96},
      YEAR = {2001},
    NUMBER = {2},
     PAGES = {326--340},
       DOI = {10.1006/jcta.2001.3186},
}

@book {Ryser,
    AUTHOR = {Ryser, Herbert John},
     TITLE = {Combinatorial mathematics},
    SERIES = {The Carus Mathematical Monographs},
    VOLUME = {14},
 PUBLISHER = {Mathematical Association of America, ; distributed by John
              Wiley and Sons, Inc., New York},
      YEAR = {1963},
 }

@PhdThesis{StonesThesis,
  author = 	 {Stones, Douglas S.},
  title = 	 {On the number of Latin rectangles},
  school = 	 {Monash University},
  year = 	 {2009},
  url = {https://bridges.monash.edu/articles/thesis/On_the_number_of_Latin_rectangles/5044384},
}

@article{StonesComputing,
  title = {On Computing the Number of Latin Rectangles},
  author = {Stones, Rebecca J. and Lin, Sheng and Liu, Xiaoguang and Wang, Gang},
  journal = {Graphs and Combinatorics},
  year = {2015},
  doi = {10.1007/s00373-015-1643-1}
}

@misc{Doyle,
      title={The number of Latin rectangles}, 
      author={Doyle, Peter G.},
      year={2007},
      eprint={math/0703896},
      archivePrefix={arXiv},
      doi = {10.48550/arXiv.math/0703896}
}

@article{DeGennaro,
  title = {How {M}any {L}atin {R}ectangles {A}re {T}here?},
  author = {de Gennaro, Aurelio},
  eprint = {arXiv:0711.0527},
  archivePrefix = {arXiv},
  year = {2007},
  doi = {10.48550/arXiv.0711.0527}, 	
}

@article {BogartLongyear,
    AUTHOR = {Bogart, K. P. and Longyear, J. Q.},
     TITLE = {Counting 3 by n {L}atin rectangles},
   JOURNAL = {Proc. Amer. Math. Soc.},
  FJOURNAL = {Proceedings of the American Mathematical Society},
    VOLUME = {54},
      YEAR = {1976},
     PAGES = {463--467},
       DOI = {10.2307/2040843},
}

@article{Pranesachar23,
  author  = {Pranesachar, C. R.},
  title   = {On the number of two-line and three-line latin rectangles, an alternative approach},
  journal = {Discrete Mathematics},
  year    = {1982},
  volume  = {38},
  number  = {1},
  pages   = {79--86},
  doi     = {10.1016/0012-365X(82)90167-1}
}

@article{Pranesachar:chromatic,
  author  = {Athreya, K. B. and Pranesachar, C. R. and Singhi, N. M.},
  title   = {On the Number of {L}atin Rectangles and Chromatic Polynomial of $L(K_{r,s})$},
  journal = {European Journal of Combinatorics},
  year    = {1980},
  volume  = {1},
  pages   = {9--17},
  doi     = {10.1016/s0195-6698(80)80015-1}
}

@article{Riordan,
  author  = {Riordan, John},
  title   = {A Recurrence Relation for Three-Line {L}atin Rectangles},
  journal = {The American Mathematical Monthly},
  year    = {1952},
  volume  = {59},
  number  = {3},
  pages   = {159--162},
  doi     = {10.2307/2306511}
}

@article{Yamamoto49,
  author  = {Yamamoto, Koichi},
  title   = {Asymptotic number of {L}atin rectangles and the symbolic method},
  journal = {S{\^u}gaku},
  year    = {1949},
  volume  = {2},
  number  = {2},
  pages   = {126--128},
  publisher = {Mathematical Society of Japan}
}

@article {Gessel:Latin,
    AUTHOR = {Gessel, Ira M.},
     TITLE = {Counting {L}atin rectangles},
   JOURNAL = {Bull. Amer. Math. Soc. (N.S.)},
  FJOURNAL = {American Mathematical Society. Bulletin. New Series},
    VOLUME = {16},
      YEAR = {1987},
    NUMBER = {1},
     PAGES = {79--82},
       DOI = {10.1090/S0273-0979-1987-15465-6},
}

@InProceedings{Gessel:3lines,
    author = "Gessel, Ira M.",
    editor = "Labelle, Gilbert and Leroux, Pierre",
     title = "Counting three-line {L}atin rectangles",
 booktitle = "Combinatoire {\'e}num{\'e}rative",
eventtitle = {Colloque de Combinatoire {\'E}numerative},
\\location = {Universit{\'e} du Qu{\'e}bec {\`a} Montr{\'e}al},
 eventdate = {May 1985},
    series =  {Lecture Notes in Mathematics},
    volume = 1234,
      year = "1986",
 publisher = "Springer",
     pages = "106--111",
       doi = {10.1007/BFb0072512},
}

@book {GouldenJackson,
    AUTHOR = {Goulden, Ian P. and Jackson, David M.},
     TITLE = {Combinatorial enumeration},
    SERIES = {Wiley-Interscience Series in Discrete Mathematics},
 PUBLISHER = {John Wiley \& Sons, Inc., New York},
      YEAR = {1983},
}

@book {Stanley,
    AUTHOR = {Stanley, Richard P.},
     TITLE = {Enumerative combinatorics. {V}olume 1},
    SERIES = {Cambridge Studies in Advanced Mathematics},
    VOLUME = {49},
   EDITION = {Second Edition},
 PUBLISHER = {Cambridge University Press, Cambridge},
      YEAR = {2012},
}

@article {Other3DRooks,
    AUTHOR = {Alayont, Feryal and Krzywonos, Nicholas},
     TITLE = {Rook polynomials in three and higher dimensions},
   JOURNAL = {Involve},
  FJOURNAL = {Involve. A Journal of Mathematics},
    VOLUME = {6},
      YEAR = {2013},
    NUMBER = {1},
     PAGES = {35--52},
       DOI = {10.2140/involve.2013.6.35},
}

@article{IversonNotation,
 author = {Donald E. Knuth},
 title = {Two Notes on Notation},
 journal = {The American Mathematical Monthly},
 number = {5},
 volume = {99},
 year = {1992},
 pages = {403--422},
 publisher = {Taylor & Francis, Ltd., Mathematical Association of America},
 doi = {10.2307/2325085},
}

@article {Wilf,
    AUTHOR = {Wilf, Herbert S.},
     TITLE = {What is an answer?},
   JOURNAL = {Amer. Math. Monthly},
  FJOURNAL = {American Mathematical Monthly},
    VOLUME = {89},
      YEAR = {1982},
    NUMBER = {5},
     PAGES = {289--292},
         DOI = {10.2307/2321713},
  }

@inbook{Zeilberger,
author = {Zeilberger, Doron},
title = {Enumerative and algebraic combinatorics},
 publisher = {Princeton University Press},
 booktitle = {The Princeton Companion to Mathematics},
 year = {2008},
doi = { 10.2307/j.ctt7sd01},
pages = {550–-561},
}

@article {Jucys,
    AUTHOR = {Jucys, A.-A. A.},
     TITLE = {The number of distinct {L}atin squares as a group-theoretical
              constant},
   JOURNAL = {J. Combinatorial Theory Ser. A},
  FJOURNAL = {Journal of Combinatorial Theory. Series A},
    VOLUME = {20},
      YEAR = {1976},
    NUMBER = {3},
     PAGES = {265--272},
        DOI = {10.1016/0097-3165(76)90020-0},
 }


\clearpage

\appendix

\section{Doyle's  Formulas}\label{sec:Doyle}

In this appendix, we derive Doyle's Formulas in the framework of multisymmetric functions. These derivations are quite natural to one that is acquainted to the calculations with symmetric functions, their scalar product and the adjoints of multiplication operators. At the same time, interesting variants of classical objects show up; in particular multisymmetric analogues of the well-known operators $\sigma^{\perp}$ from classical symmetric function theory, whose effect is to ``add 1 to an alphabet''.

In order to avoid overloading the sum symbols in the formulas, we will make use 
of ``Iverson's notation'' (see \cite{IversonNotation}): given a condition $X(\alpha)$ depending on  the indices $\alpha$ of a summation, $\iverson{X(\alpha)}$ is $1$ if the condition fulfilled, and $0$ otherwise. So, for instance, for $B\subset[m]$, 
\[
\sum_{\alpha: \atop S(\alpha) \subset B} h_\alpha
= \sum_{\alpha} \iverson{S(\alpha) \subset B} \cdot h_\alpha,
\]
where $S(\alpha)$ stands for the \emph{support of $\alpha$}, i.e. the set of all $i$ such that $\alpha_i \neq 0$.

\subsection{Statement of Doyle's Formulas}

Given a multiset $\nu$ of  subsets of $[m]$, let $\nu^+$ be the multiset defined by
\[
\nu^+_A = \sum_{B\atop A\subset B} \nu_B,
\]
let $\|\nu\|=\sum_A \nu_A \cdot \# A $, and for  $f\in \FA$, let   
$
f_{| A \leftarrow \nu_A}
$
be the number obtained by replacing each variable $A$ (for $A\subset [m]$ non-empty) with $\nu_A$. This amounts to replacing each $\norma{A}$ with $\tau_A \cdot \nu_A$.

\begin{theorem}[Doyle's First Formula{\cite[Section 10]{Doyle}} and \cite{DeGennaro}]
We have
    \[
  L_{m,n}
  = (-1)^{mn} \sum_{\nu} (-1)^{\| \nu\|} \binom{n}{\nu} \left( e([m])_{| A \leftarrow \nu^+_A}\right)^n,
    \]
    where the sum is over all $n$-multisets $\nu$  of subsets of $[m]$. 
\end{theorem}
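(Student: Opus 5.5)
The plan is to avoid scalar products and work directly with the coefficient-extraction identity behind Theorem \ref{Latin scalar}. By \eqref{permanent} and the discussion after Lemma \ref{extraction}, $L_{m,n}$ is the coefficient of $\prod_{i\in[n],\,j\in[m]} x_{i,j}$ in the polynomial
\[
P(\multi{x}_1,\ldots,\multi{x}_n) = e([m])(\multi{x}_1,\ldots,\multi{x}_n)^n,
\]
which is $e([m])^n$ with all rows beyond the $n$-th set to zero. I will extract this coefficient by inclusion--exclusion over $0/1$ evaluations, and then evaluate $e([m])$ at $0/1$ alphabets using its power-sum expansion (Theorem \ref{Rosas}).

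\emph{Step 1 (inclusion--exclusion).} For $S\subset[n]\times[m]$, let $\mathbf{1}_S$ be the point with $x_{i,j}=1$ if $(i,j)\in S$ and $x_{i,j}=0$ otherwise. Then $\sum_S (-1)^{mn-\#S} P(\mathbf{1}_S)$ is the sum of the coefficients of those monomials of $P$ in which every variable $x_{i,j}$ occurs. Each factor $e([m])$ contains exactly one variable from each column $j$. Hence every monomial of $P$ has degree exactly $n$ in the $n$ variables $x_{1,j},\ldots,x_{n,j}$ of column $j$. A monomial in which all of these variables occur must therefore contain each of them exactly once, so it is $\prod x_{i,j}$. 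This gives
\[
L_{m,n} = \sum_S (-1)^{mn-\#S} P(\mathbf{1}_S).
\]
This degree argument is the conceptual heart of the proof. It is exactly where ``at least once in each row'' is equivalent to ``exactly once'', in line with Remark \ref{rem:Doyle}.

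\emph{Step 2 (grouping by multisets).} A set $S$ is the same as a sequence $(B_1,\ldots,B_n)$ of subsets of $[m]$, with $B_i=\{j : (i,j)\in S\}$, and $\#S=\sum_i \#B_i$. Since $P$ is symmetric under permutations of the rows $\multi{x}_i$, the value $P(\mathbf{1}_S)$ depends only on the multiset $\nu$ of the $B_i$, where $\nu_B=\#\{i : B_i=B\}$. Each such $n$-multiset $\nu$ is realized by $\binom{n}{\nu}$ sequences, and $\#S=\|\nu\|$. The sign $(-1)^{mn-\|\nu\|}$ is then the sign $(-1)^{mn}(-1)^{\|\nu\|}$ in the statement.

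\emph{Step 3 (evaluation).} It remains to show that $e([m])$ evaluated at the alphabet $\multi{x}_i=\chi(B_i)$, $i=1,\ldots,n$, equals $e([m])_{|A\leftarrow\nu^+_A}$. By Theorem \ref{Rosas}, $e([m])$ is a polynomial in the power sums $A=p_{\chi(A)}$. At this alphabet,
\[
p_{\chi(A)} = \sum_i \prod_{j\in A} \chi(B_i)_j = \#\{i : A\subset B_i\} = \sum_{B\supset A}\nu_B = \nu^+_A .
\]
So substituting the alphabet amounts to replacing each variable $A$ by $\nu^+_A$. Since $P(\mathbf{1}_S)$ is the $n$-th power of this value, assembling Steps 1--3 yields the formula.

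As a sanity check I would verify the case $m=1$: there $e([1])_{|A\leftarrow\nu^+}$ is $\nu_{\{1\}}$, and the formula should reduce to the inclusion--exclusion expression for $n!$. The only delicate points are the degree argument in Step 1 and making sure that evaluating an element of $\FA$ at a finite alphabet is compatible with the identification $A=p_{\chi(A)}$. Both are elementary, so I expect no serious obstacle beyond bookkeeping of signs.
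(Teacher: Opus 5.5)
Your proof is correct. It is the paper's argument with the scalar-product layer removed.

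The paper starts from $L_{m,n}=\scalar{e([m])^n}{\NZn}$ and applies inclusion--exclusion to each factor, $\NZ=\sum_{B}(-1)^{m-\#B}\sigma_B$. Expanding the $n$-th power multinomially is exactly your passage from $S\subset[n]\times[m]$ to sequences $(B_1,\ldots,B_n)$ and then to $n$-multisets $\nu$. The paper then evaluates $\scalar{f}{\prod_B\sigma_B^{\nu_B}}$ through the adjoints $\sigma_B^\perp$ (Proposition \ref{sigma t}, Corollaries \ref{sigma B} and \ref{sigma B power}). Since $\sigma_B^\perp$ sends $p_\alpha$ to $p_\alpha+\chi(B)^\alpha$, it adjoins the letter $\chi(B)$ to the alphabet. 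The final pairing with $1$ evaluates at the empty alphabet. So Corollary \ref{sigma B power} is precisely your Step~3: evaluate $f$ at the alphabet made of $\nu_B$ copies of $\chi(B)$, where $p_{\chi(A)}$ becomes $\nu^+_A$.

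Your route is more elementary and self-contained. It needs neither Lemma \ref{extraction} nor the adjoint computation. It also makes explicit the degree argument (each column of variables has total degree exactly $n$) behind replacing ``exactly once'' by ``at least once'', which the paper only states briefly in Remark \ref{rem:Doyle}.

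The paper's operator language pays off in the Second Formula. There $K_B=\sigma_B\delta_B$ contributes a derivation $\delta_B^\perp$ rather than a plain evaluation, and the same adjoint machinery handles it.
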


For instance, for $m=1$, and writing $\nu_0$ and $\nu_1$ instead of $\nu_\emptyset$ and $\nu_{\set{1}}$, Doyle's First Formula is
\[
n! = L_{1,n}  = \sum_{\nu_0 +\nu_1 = n} (-1)^{\nu_1} \binom{n}{\nu_0, \nu_1} \left(\nu_1\right)^n.
  \]
Let us rewrite the formula with $k$ for $\nu_1$,  and $n-k$ for $\nu_0$. We get a more familiar formula:
\[
n! = L_{1,n} = (-1)^n \sum_{k=0}^n (-1)^k \binom{n}{k} k^n.
\]

For $m=2$, (writing $\nu_0$ for $\nu_\emptyset$, $\nu_i$ for $\nu_{\set{i}}$ and $\nu_{12}$ for $\nu_{\set{1,2}}$) the formula is:
  \[
  L_{2,n}= \sum_{\nu}
  (-1)^{\nu_1+\nu_2} \binom{n}{\nu_0, \nu_1, \nu_2, \nu_{12}}
      ((\nu_1+\nu_{12})(\nu_2+\nu_{12})-\nu_{12})^n
  \]
      where the sum is over all tuples of nonnegative integers
    $
    \nu=(\nu_0, \nu_1, \nu_2, \nu_{12})
    $
    with sum $n$.

Let us present now Doyle's second formula (which, actually, in Doyle's paper \cite{Doyle} comes first).  It is more complicated to state but more efficient for practical calculations since it is  a sum over a much smaller domain.
\begin{theorem}[Doyle's Second Formula{\cite[Section 8]{Doyle}}, {\cite[Theorem 2]{StonesComputing}}]
    \[
  \frac{L_{m,n}}{n!}
  = (-1)^{mn-n}
  \sum_{\nu} (-1)^{\| \nu\|}
  \binom{n}{\nu}
  \;
  \prod_{B \subset [m-1]} \left(e([m-1])_{|A \leftarrow \nu^+_A-\iverson{A\subset B}}\right)^{\nu_B}
  \]
    where the sum is over all $n$-multisets $\nu$  of subsets of $[m-1]$.
\end{theorem}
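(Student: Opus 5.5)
The plan is to follow Doyle's original inclusion–exclusion, but to phrase the column count through the evaluation of $e([m-1])$ at a $0$-$1$ matrix. In that evaluation each power sum $A=p_{\chi(A)}$ becomes an integer; this is exactly the substitution $f_{|A\leftarrow \cdot}$ used in the statement.

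\emph{Normalisation.} First, $L_{m,n}/n!$ counts Latin rectangles whose first row is $1,2,\ldots,n$. Deleting that row gives a bijection with $(m-1)\times n$ arrays filled with symbols in $[n]$ such that:
\begin{itemize}
\item each column has distinct entries;
\item column $k$ never contains the symbol $k$;
\item every symbol appears exactly once in every row.
\end{itemize}
Since each row has $n$ cells and there are $n$ symbols, ``exactly once'' is equivalent to ``at least once'' (this is the point made in remark~\ref{rem:Doyle}). This is the only place where the Latin, rather than partial, structure is used.

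\emph{Inclusion–exclusion.} For each symbol $i\in[n]$ choose a set $C_i\subset[m-1]$ of rows in which $i$ is forbidden, and write $B_i=[m-1]\setminus C_i$ for the rows where $i$ is allowed. Inclusion–exclusion on the ``at least once'' conditions gives
\[
\frac{L_{m,n}}{n!}=\sum_{(B_1,\ldots,B_n)}\prod_{i}(-1)^{(m-1)-\#B_i}\;N(B_1,\ldots,B_n),
\]
where $N$ counts arrays with injective columns, column $k$ avoiding symbol $k$, and symbol $i$ used only in rows of $B_i$. The sign equals $(-1)^{mn-n}(-1)^{\|\nu\|}$, where $\nu$ is the $n$-multiset $\nu_B=\#\{i : B_i=B\}$. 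Since columns are now filled independently, $N=\prod_k N_k$, where $N_k$ is the number of injective fillings of column $k$.

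\emph{Evaluating a column.} The fillings of one column are the monomials of $e([m-1])$. So $N_k$ is $e([m-1])$ evaluated at the $0$-$1$ matrix $x_{i,j}=\iverson{j\in B_i \text{ and } i\neq k}$, with $x_{i,j}=0$ for $i>n$. Under such an evaluation,
\[
p_{\chi(A)}=\sum_i\prod_{j\in A}x_{i,j}=\#\{i\neq k : A\subset B_i\}=\nu^+_A-\iverson{A\subset B_k}.
\]
By Theorem~\ref{Rosas}, $e([m-1])$ is a polynomial in these $p_{\chi(A)}$. Hence $N_k=e([m-1])_{|A\leftarrow\nu^+_A-\iverson{A\subset B_k}}$, which depends on $k$ only through $B_k$.

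\emph{Grouping.} Gathering the tuples $(B_1,\ldots,B_n)$ according to $\nu$ produces the factor $\binom{n}{\nu}$. The product over columns becomes $\prod_{B\subset[m-1]}(\cdots)^{\nu_B}$, which is Doyle's Second Formula. The same argument without the normalisation gives the First Formula as a consistency check: no $-\iverson{\cdot}$ correction appears, and the exponent is $n$.

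I expect the main obstacle to be stating the evaluation step cleanly in the multisymmetric setting. One needs that specialising the $x_{i,j}$ (infinitely many rows, all but finitely many zero) to $0$ and $1$ is a ring homomorphism on multisymmetric functions that sends $e_{\chi(A)}$ to the count of injective fillings. One also needs that, by Rosas' expansion, this agrees with substituting integers for the variables $A$ of $\FA$. To present this in the spirit of the paper, I would instead realise inclusion–exclusion as replacing $h([m-1])^n$ in Theorem~\ref{Latin scalar} by a signed sum of products $\prod_i \sigma_{B_i}$, where $\sigma_B=\sum_{S(\alpha)\subset B}h_\alpha$. Pairing against these then amounts to the evaluation above. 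If the adjoint-operator bookkeeping becomes heavy, I would fall back on the direct combinatorial count, using Theorem~\ref{Rosas} only to justify the substitution.
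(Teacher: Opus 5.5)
Your proof is correct, but it takes a different route from the paper's. It is essentially Doyle's original argument: normalise the first row, do inclusion--exclusion directly on the arrays, and factor over columns. The multisymmetric input is reduced to one fact. Specialising the $x_{i,j}$ to a finitely supported $0$-$1$ matrix is a ring homomorphism. Since $e([m-1])$ is a polynomial in the $p_{\chi(A)}$ by Theorem~\ref{Rosas}, this specialisation acts on $e([m-1])$ as the substitution $A\leftarrow\#\{\text{matrix rows containing }\chi(A)\}$. Your sign bookkeeping $\prod_i(-1)^{m-1-\#B_i}=(-1)^{mn-n}(-1)^{\|\nu\|}$ and the identification $\#\{i\neq k : A\subset B_i\}=\nu^+_A-\iverson{A\subset B_k}$ are both right.

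The paper deliberately avoids normalising, because fixing the first row breaks the symmetry under permutations of the symbols (the rows of $(x_{i,j})$) on which the scalar-product method rests. Instead it singles out the \emph{last} row and writes $L_{m,n}=\scalar{E^n}{K^n}$. It does inclusion--exclusion inside $K$ and factors $K_B=\sigma_B\delta_B$. It extracts $n!$ from the fact that $\delta_B^\perp$ is a first-order differential operator with $\delta_C^\perp\delta_B^\perp(E)=0$. It then proves $\delta_B^\perp(e([m]))=(\sigma_B^\perp)^{-1}(e([m-1]))$ by a sum-of-alphabets computation and concludes with Corollary~\ref{sigma B power}.

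The two proofs correspond term by term. Pairing with $\prod_B\sigma_B^{\nu_B}$ is evaluation at the alphabet made of $\nu_B$ letters $\chi(B)$, which is your $0$-$1$ matrix. The correction $-\iverson{A\subset B}$, which the paper obtains from $(\sigma_B^\perp)^{-1}$ by deleting one letter $\chi(B)$, is in your proof the exclusion of symbol $k$ from column $k$.

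Your version is shorter and makes the combinatorial meaning of each factor transparent. The paper's stays entirely inside the scalar-product calculus, and its point is to exhibit the multisymmetric analogues of $\sigma^\perp$ and the operators $\delta_B^\perp$. The fallback in your last paragraph is unnecessary, since the direct count is already complete. A literal scalar-product version of the \emph{normalised} count would also be awkward, for exactly the symmetry reason that leads the paper to move the distinguished row.
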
  
For instance, for $m=2$, and with $\nu_0$ for $\nu_\emptyset$ and $\nu_1$ for $\nu_{\set{1}}$, we   get
\[
\frac{L_{2,n}}{n!}
=
(-1)^n \sum_{\nu_0 + \nu_1=n} (-1)^{\nu_1} \binom{n}{\nu_0, \nu_1} (\nu_1)^{\nu_0} (\nu_1-1)^{\nu_1}.
\]
Writing $k$ for $\nu_1$ and $n-k$ for $\nu_0$, we get:
\[
\frac{L_{2,n}}{n!}
=
(-1)^n \sum_{k=0}^n (-1)^k \binom{n}{k} k^{n-k} (k-1)^k.
\]
As pointed out in \cite{Doyle}, this is a formula for the  derangement numbers due to Ryser \cite{Ryser}.

For $m=3$, we get
  \[
  \frac{L_{3,n}}{n!}= \sum_{\nu}
  (-1)^{\nu_1+\nu_2} \binom{n}{\nu_0, \nu_1, \nu_2, \nu_{12}}
  f_0^{\nu_0}
  f_1^{\nu_1}
  f_2^{\nu_2}
  f_{12}^{\nu_{12}},   
  \]
      where the sum is over all tuples of nonnegative integers
    $
    \nu=(\nu_0, \nu_1, \nu_2, \nu_{12})
    $
    with sum $n$, and
    \begin{align*}
    f_0 &= (\nu_1+\nu_{12})(\nu_2+\nu_{12})-\nu_{12},\\
   f_1 &= (\nu_1-1+\nu_{12})(\nu_2+\nu_{12})-\nu_{12}, \\
   f_2 &= (\nu_1+\nu_{12})(\nu_2-1+\nu_{12})-\nu_{12},\\
  f_{12}&= (\nu_1+\nu_{12}-1)(\nu_2+\nu_{12}-1)-(\nu_{12}-1).
    \end{align*}

\subsection{Proof of Doyle's First Formula with multisymmetric functions}
    
    We start as in Gessel's Formula : the arrays fulfilling column conditions are those corresponding to sequences of summands of $e([m])$. Those fulfilling  the new row conditions (``every row contains \emph{at least} one occurrence of each symbol'')  are those whose product is of the form 
  $\multi{x}_1^{\vec{u}_1} \multi{x}_2^{\vec{u}_2} \cdots \multi{x}_r^{\vec{u}_r}$
    where the $\vec{u}_i$ are vectors whose components are all non-zero. After Lemma \ref{extraction}, we can conclude that
    \[
    L_{m,n} = \scalar{e([m])^n}{ \NZn}
    ,
    \quad \text{ where }
    \NZ  = \sum_{\alpha} \iverson{S(\alpha) = [m]}  \cdot  h_{\alpha}
    .
  \]
  Following Doyle, we use now inclusion-exclusion:
  \[
  \forall \alpha\in \NN^m,
  \qquad
  \iverson{S(\alpha) = [m]} 
   = \sum_{B \subset [m]} (-1)^{m-\#B}
   \iverson{S(\alpha) \subset B}.
  \]
  From this, we deduce
  \[
  \NZ  = \sum_{B \subset [m]} (-1)^{m-\#B}\;\sigma_B
  , \quad \text{ where }
  \sigma_B = \sum_{\alpha}\iverson{S(\alpha) \subset B} \cdot h_{\alpha}
 . 
\]
Now we have:
\begin{align*}
L_{m,n}
&=
\scalar{e([m])^n}{\NZn}
              \\
&=
\scalar{e([m])^n}{
  \left(    \sum_{B \subset [m]} (-1)^{m-\#B} \sigma_B    \right)^n
}\\
&=
\scalar{  e([m])^n}{
  (-1)^{mn} \sum_{\nu}
 \binom{n}{\nu} 
   \prod_{B \subset [m]} \left((-1)^{\#B} \sigma_B\right)^{\nu_B}
}\\
  &=(-1)^{mn}
  \cdot
  \sum_{\nu}
  (-1)^{\|\nu\|} \binom{n}{\nu} 
\scalar{  e([m])^n}{
   \prod_{B \subset [m]}  \sigma_B^{\nu_B}
}
.
\end{align*}
We will now ``move each $\sigma_B$ to the other side of the scalar product'' by introducing the adjoint $\sigma_B^\perp$ of the operator of multiplication by $\sigma_B$.  Let us first elucidate the effect this adjoint.

We start with the effect of a more general operator. Remember that in the theory of symmetric functions, the adjoint $\sigma_z^\perp$ of the series of the complete functions $\sigma_z=\sum_{n} h_n z^n$ sends each power sum $p_k$ to $p_k+z^k$. This generalizes  straightforwardly for multisymmetric functions.
\begin{proposition} \label{sigma t}
  Let
  \[
  \sigma(\multi{t})=\sigma(t_1, t_2, \ldots, t_m) = \sum_{\alpha} h_{\alpha} \multi{t}^\alpha.
  \]
  Then its adjoint operator $\sigma(\multi{t})^\perp$ of the multiplication by $\sigma(\multi{t})$ is the morphism of algebra that sends every power sum $p_{\alpha}$ to $p_{\alpha}+ \multi{t}^{\alpha}$.
\end{proposition}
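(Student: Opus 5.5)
We have to show that $\sigma(\multi{t})^\perp$ is an algebra morphism with $p_\alpha \mapsto p_\alpha + \multi{t}^\alpha$. The plan is to prove this on the power-sum monomial basis $p_\lambda=\prod_\alpha p_\alpha^{k_\alpha}$ by testing both sides against an arbitrary power-sum monomial with the scalar product \eqref{eq:orthogonality}.

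First, write $\sigma(\multi{t})$ in power sums, using the exponential relation stated earlier:
\[
\sigma(\multi{t})=\exp\Bigl(\sum_{\alpha\neq\multi{0}} \frac{p_\alpha}{Z_\alpha}\multi{t}^\alpha\Bigr)
=\prod_{\alpha\neq \multi{0}} \sum_{j\ge 0} \frac{p_\alpha^{j}\,\multi{t}^{j\alpha}}{Z_\alpha^{j}\, j!}.
\]
Let $\Psi$ be the algebra morphism sending $p_\alpha$ to $p_\alpha+\multi{t}^\alpha$. We need
\[
\scalar{\Psi(F)}{G}=\scalar{F}{\sigma(\multi{t})\,G}
\]
for all $F,G$. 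By bilinearity it suffices to take $F=\prod_\alpha p_\alpha^{k_\alpha}$ and $G=\prod_\alpha p_\alpha^{\ell_\alpha}$.

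Both sides factor over $\alpha$. On the left, $\Psi(F)=\prod_\alpha (p_\alpha+\multi{t}^\alpha)^{k_\alpha}$. On the right, $\sigma(\multi{t})G$ is a product over $\alpha$ of series in $p_\alpha$ alone. By orthogonality \eqref{eq:orthogonality}, which factors over distinct $\alpha$ as in Lemma \ref{separation 1}, both sides reduce to the same one-variable identity for each fixed $\alpha$. Write $p=p_\alpha$, $Z=Z_\alpha$, $s=\multi{t}^\alpha$. The identity to prove is
\[
\scalar{(p+s)^k}{p^\ell}=\scalar{p^k}{\,p^\ell \textstyle\sum_j \frac{p^j s^j}{Z^j j!}}.
\]
Using $\scalar{p^a}{p^b}=\delta_{ab}Z^a a!$:
\begin{itemize}
\item The left side is $\binom{k}{\ell}s^{k-\ell}Z^\ell \ell!$.
\item The right side picks $j=k-\ell$ and gives $\frac{s^{k-\ell}}{Z^{k-\ell}(k-\ell)!}Z^k k!$.
\end{itemize}
These are equal, and both vanish when $k<\ell$.

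The only delicate point is justifying the factorization over $\alpha$, since infinitely many $\alpha$ appear. In each fixed multidegree only finitely many terms contribute, and in each factor all but finitely many $\alpha$ have $k_\alpha=\ell_\alpha=0$. For such $\alpha$ only the $j=0$ term survives, with value $1$. So the product is effectively finite. One can also view everything as formal power series in $\multi{t}$ and compare coefficientwise. Once this bookkeeping is granted, the computation above finishes the proof.
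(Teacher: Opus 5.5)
Your proof is correct, but it takes a slightly different route from the paper's. The paper first computes the adjoint of a single generator, $(p_\alpha/Z_\alpha)^\perp=\partial/\partial p_\alpha$. It then writes $\sigma(\multi{t})^\perp=\exp\bigl(\sum_{\alpha}\multi{t}^\alpha\,\partial/\partial p_\alpha\bigr)$ and recognizes this, by Taylor's formula, as the translation $p_\alpha\mapsto p_\alpha+\multi{t}^\alpha$.

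You skip the differential-operator step. Instead you check the adjoint relation for the proposed morphism $\Psi$ directly on pairs of power-sum monomials. Orthogonality then reduces everything to the one-variable identity $\binom{k}{\ell}Z^\ell\,\ell!=Z^k k!/\bigl(Z^{k-\ell}(k-\ell)!\bigr)$.

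Both arguments rest on the same mechanism: power-sum monomials are orthogonal, so everything factors over $\alpha$. Yours is more elementary and self-contained. It avoids two facts the paper uses implicitly for infinitely many variables: that the adjoint of an exponential is the exponential of the adjoint, and that the exponential of a derivation is a shift. You also handle the finiteness issue explicitly via the grading, which the paper leaves unstated.

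The paper's route has its own payoff. It isolates the lemma $(p_\alpha/Z_\alpha)^\perp=\partial/\partial p_\alpha$, which is reused in the appendix to describe $\delta_B^\perp$ as a first-order differential operator. Your argument does not produce that by-product directly.
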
  
\begin{proof}
  From the formulas for the scalar products of products of power sum, one can check that
  \[
  \left(\frac{p_\alpha}{Z_\alpha}\right)^\perp = \frac{\partial \phantom{p_{\alpha}}}{\partial {p_{\alpha}}}.
  \]
  Since
  \[
  \sigma(\multi{t}) =
  \exp\left( \sum_{\alpha \neq \multi{0}} \frac{p_\alpha}{Z_\alpha} \multi{t}^\alpha\right),
  \]
  we have
  \[
  \sigma(\multi{t})^\perp =
  \exp\left( \sum_{\alpha \neq \multi{0}}  \multi{t}^\alpha \frac{\partial \phantom{p_{\alpha}}}{\partial {p_{\alpha}}} \right).
  \]
  One can check that this sends  any product of power sums $\prod_{\alpha\neq \multi{0}} p_{\alpha}^{\ell_\alpha}$ to 
  $\prod_{\alpha\neq \multi{0}} (p_\alpha+\multi{t}^\alpha)^{\ell_\alpha}$.
  \end{proof}

\begin{corollary}\label{sigma B}
  Let $B\subset [m]$.
  The adjoint $\sigma_B^\perp$ of the multiplication by $\sigma_B$ is the morphism of algebras such that,  
for any power sum $p_{\alpha}$,
\[
\sigma_B^{\perp}(p_{\alpha})
=
\begin{cases}
  p_{\alpha}+1 & \text{ if } S(\alpha) \subset B,\\
  p_{\alpha}   & \text{ otherwise.}
\end{cases}  
\]
\end{corollary}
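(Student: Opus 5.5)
The plan is to specialize Proposition~\ref{sigma t}. First observe that $\sigma_B$ is itself a specialization of $\sigma(\multi{t})$: set $t_j=1$ for $j\in B$ and $t_j=0$ for $j\notin B$. Then $\multi{t}^\alpha$ equals $1$ exactly when $S(\alpha)\subset B$, with the convention $0^0=1$ for the coordinates where $\alpha_j=0$. It equals $0$ when some $j\notin B$ has $\alpha_j>0$. Hence
\[
\sigma(\multi{t})_{|t_j=\iverson{j\in B}} = \sum_\alpha \iverson{S(\alpha)\subset B}\, h_\alpha = \sigma_B .
\]

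Next I will check that taking adjoints commutes with this specialization. Work degree by degree in the $\NN^m$-grading. The scalar product is bilinear, and in each fixed degree only finitely many $h_\alpha$ contribute to $\scalar{\sigma(\multi{t})\, g}{f}$. So for homogeneous $f,g$ the quantity $\scalar{\sigma(\multi{t}) g}{f}=\scalar{g}{\sigma(\multi{t})^\perp f}$ is a polynomial identity in $\multi{t}$. We may therefore substitute $t_j=\iverson{j\in B}$ on both sides. This shows that $\sigma_B^\perp$ is obtained from $\sigma(\multi{t})^\perp$ by the same substitution.

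Then apply Proposition~\ref{sigma t}. The operator $\sigma(\multi{t})^\perp$ is the algebra morphism $p_\alpha\mapsto p_\alpha+\multi{t}^\alpha$. Specializing, $\sigma_B^\perp$ is the algebra morphism $p_\alpha\mapsto p_\alpha+\iverson{S(\alpha)\subset B}$, which is the claimed formula. Specializing an algebra morphism with polynomial coefficients in $\multi{t}$ again gives an algebra morphism, since products of polynomials in the $p_\alpha$ specialize termwise.

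The only delicate point is the legitimacy of evaluating at $t_j=1$: $\sigma(\multi{t})$ is a formal series in $\multi{t}$, not a polynomial. I handle this through the grading. Pairing against $f$ of degree $\beta$ involves only the $h_\alpha$ with $\alpha\le\beta$ componentwise, so both sides are genuinely polynomials in $\multi{t}$ and the substitution is harmless.
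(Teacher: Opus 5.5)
Your proof is correct and takes the same route as the paper: the paper's proof observes that $\sigma_B$ is obtained from $\sigma(\multi{t})$ by setting $t_i=[i\in B]$, and then invokes Proposition~\ref{sigma t}. Your degree-by-degree argument, showing that the adjoint commutes with this specialization and that evaluating the formal series at $t_j=1$ is harmless, supplies a justification the paper leaves implicit.
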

\begin{proof}
  This follows from Proposition \ref{sigma t}, after observing that  $\sigma_B$ is obtained from $\sigma(\multi{t})$ by setting $t_i=[i \in B]$.
\end{proof}

In particular, if we apply $\sigma_B^\perp$ to $A=p_{\chi(A)}$, then we get
\[
\sigma_B^{\perp}(A) = \begin{cases}
  A+1 & \text{ if } A \subset B,\\
  B   & \text{ otherwise.}
\end{cases}  
\]

\begin{corollary}\label{sigma B power}
  Let $\nu$ be a multiset of  subsets of $[m]$.
 
  \[
  \forall f \in \FA,
  \qquad
  \scalar{f}{\prod_{B \subset [m]}  \sigma_B^{\nu_B}
}  =
  f_{| A \leftarrow \nu^+_A}.
  \]
\end{corollary}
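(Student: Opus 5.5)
The plan is to move every factor $\sigma_B$ to the left side of the scalar product, using adjoints, and then pair the result with $1$.

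\emph{Step 1: move the $\sigma_B$'s across.} Since the multiplication operators by the $\sigma_B$ commute, we have
\[
\scalar{f}{\prod_B \sigma_B^{\nu_B}\cdot 1}
=
\scalar{\Big(\prod_B (\sigma_B^\perp)^{\nu_B}\Big) f}{1}.
\]
By Corollary \ref{sigma B}, each $\sigma_B^\perp$ is an algebra morphism. Composing these morphisms therefore gives again an algebra morphism $\Psi$, and it suffices to determine $\Psi$ on the generators $p_\alpha$.

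\emph{Step 2: compute $\Psi$ on generators.} Each application of $\sigma_B^\perp$ adds $1$ to $p_\alpha$ exactly when $S(\alpha)\subset B$, and leaves it unchanged otherwise. Since the constants added are never altered by later morphisms, the contributions simply accumulate:
\[
\Psi(p_\alpha)=p_\alpha+\sum_{B:\,S(\alpha)\subset B}\nu_B .
\]
For $\alpha=\chi(A)$ the support is $A$, so this reads $\Psi(A)=A+\nu^+_A$. Since $f\in\FA$ is a polynomial in the variables $A$, it follows that $\Psi(f)$ is $f$ with each $A$ replaced by $A+\nu^+_A$.

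\emph{Step 3: pair with $1$.} By the orthogonality relation \eqref{eq:orthogonality}, $\scalar{g}{1}$ equals the constant term of $g$ in the power-sum basis: every non-constant monomial in the power sums is orthogonal to $1$, and $\scalar{1}{1}=1$. The constant term of a polynomial in the variables $A+\nu^+_A$ is obtained by setting every $A$ to $0$. This gives $f_{|A\leftarrow \nu^+_A}$, as claimed.

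\emph{Main point needing care.} Everything here is formal. The only step to justify is that the adjoint of a product of multiplication operators is the composite of the individual adjoints, and that these adjoints commute. Both facts follow directly from the commutativity of multiplication.
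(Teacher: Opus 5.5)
Your proof is correct and is essentially the paper's own argument: you move the factors $\sigma_B$ across the scalar product as the commuting algebra morphisms $\sigma_B^\perp$, which together shift each variable $A$ by $\nu^+_A$, and pairing with $1$ then sets every $A$ to $0$. Your orthogonality justification that $\scalar{g}{1}$ is the constant term in the power-sum basis simply spells out the paper's remark that ``the scalar product with $1$ kills all variables.''
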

\begin{proof}
  Taking adjoints, we see that
  \[
  \scalar{f}{\prod_{B \subset [m]}  \sigma_B^{\nu_B}
}  =
  \scalar{\prod_{B \subset [m]}  \left(\sigma_B^\perp\right)^{\nu_B} \left(f\right)}
         {1}
         .
  \]
The effect of each $\sigma_B^\perp$ is to replace, in its argument, each set $A$ contained in $B$ with $A+1$.
Thus  $\left(\sigma_B^\perp\right)^{\nu_B}$ replaces each $A\subset B$ with $A+\nu_B$, and $\prod_{B \subset [m]}  \left(\sigma_B^\perp\right)^{\nu_B}$ replaces each $A$ with
\[
A + \sum_{B: \atop A\subset B} \nu_B.
\]
which is $A+\nu^+_A$.
Last, the scalar product with $1$ kills all variables. The overall effect is to replace each 
$A$ with $\nu^+_A$.
\end{proof}

This concludes the proof of Doyle's First Formula.

\subsection{Proof of Doyle's Second Formula with multisymmetric functions}

In the calculation that follows, it will be convenient to use some new notations. For $\beta=(\beta_1, \ldots, \beta_{m-1})$, we denote with $\beta,k$ the concatenation $(\beta_1, \ldots, \beta_{m-1}, k)$.  We will also use $E$ for $e([m])$.

Doyle proved the formula by starting considering only the normalised Latin rectangles (those whose first line is $1,2,\ldots,n$). This is hardly compatible with our methods since it breaks symmetries. We will start slightly differently.

We replace the row conditions for Latin rectangles with:
\begin{itemize}
\item in the last row, every symbol appears \emph{exactly} one.
\item in every other row, every symbol appears \emph{at most} once.
\end{itemize}

We deduce that $L_{m,n}$ is the sum of the coefficients, in the expansion of $E^n$ (remember that, in this section,  $E$ stands for $e([m])$),  of the monomials in the $x_{i,j}$ ($i \le n$) such that each $x_{i,j}$ appears at least once if $j<m$, and exactly once if $j=m$.
Applying Lemma \ref{extraction}, we get
\[
L_{m,n} = \scalar{E^n}{K^n}
  ,
  \quad \text{ where }
  K = \sum_{\beta}   \iverson{S(\beta)=[m-1]} \cdot h_{\beta,1}
  .
\]
where, here and in what follows, the  indices $\beta$ run in $\NN^{m-1 }$.

Applying once again inclusion-exclusion, we have:
\[
K
=
\sum_{B \subset [m-1]} (-1)^{m-1-\#B}
\;  K_B
, \quad \text{ where }
K_B =  \sum_\beta \iverson{S(\beta)\subset B} \cdot h_{\beta,1}
.
\]
Expanding $K^n$ in $L_{m,n}=\scalar{E^n}{K^n}$, we get
\begin{align*}
  L_{m,n}
  &= 
\scalar{E^n}{
\sum_\nu \binom{n}{\nu} \prod_{B \subset [m-1]} \left((-1)^{m-1-\#B} K_B\right)^{\nu_B} 
}
\\
&=
(-1)^{mn-n} \sum_\nu  (-1)^{\|\nu \|} \binom{n}{\nu}
 \scalar{E^n}{
\prod_{B \subset [m-1]} K_B^{\nu_B} 
}
\end{align*}
where the sum is over all $n$-multisets $\nu$ of subsets of $[m-1]$.   

We will now determine the operators $K_B^\perp$. To this aim, let us introduce the following 1-parameter deformation:
 \[
\KK_B(z) =   \sum_{\beta, k}   \iverson{S(\beta)\subset B} \cdot h_{\beta,k} z^k 
\]
so that $K_B = \KK_B'(0)$.

\begin{lemma}\label{decomposition}
  We have
  \[
  K_B = \sigma_B \cdot \delta_B , \qquad  \text{ where }\quad
  \delta_B = \sum_{\beta} \iverson{S(\beta)\subset B} \frac{p_{\beta, 1}}{Z_{\beta,1}}.
  \]
\end{lemma}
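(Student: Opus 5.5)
We prove the identity $K_B=\sigma_B\cdot\delta_B$ through the deformation $\KK_B(z)$, using $K_B=\KK_B'(0)$. The first step is to write $\KK_B(z)$ as a specialization of the generating series $\sigma(\multi{t})=\sum_\alpha h_\alpha\multi{t}^\alpha$ of Proposition \ref{sigma t}. Set $t_i=\iverson{i\in B}$ for $i\le m-1$ and $t_m=z$. Then $\multi{t}^{(\beta,k)}$ equals $z^k$ when $S(\beta)\subset B$ and vanishes otherwise. Hence
\[
\KK_B(z)=\sigma(\multi{t})_{|t_i=\iverson{i\in B},\,t_m=z},
\]
and in particular $\KK_B(0)=\sigma_B$, consistent with Corollary \ref{sigma B}.

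Next we substitute the exponential expression
\[
\sigma(\multi{t})=\exp\Bigl(\sum_{\alpha\neq\multi{0}}\frac{p_\alpha}{Z_\alpha}\multi{t}^\alpha\Bigr).
\]
Under the same specialization, the sum inside the exponential becomes
\[
\Psi_B(z)=\sum_{(\beta,k)\neq\multi{0}}\iverson{S(\beta)\subset B}\,\frac{p_{\beta,k}}{Z_{\beta,k}}\,z^k,
\]
so $\KK_B(z)=\exp(\Psi_B(z))$. Differentiating at $z=0$ gives
\[
K_B=\KK_B'(0)=\exp(\Psi_B(0))\cdot\Psi_B'(0).
\]
The factor $\exp(\Psi_B(0))$ is $\KK_B(0)=\sigma_B$. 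The derivative $\Psi_B'(0)$ keeps exactly the terms with $k=1$, which is $\sum_\beta\iverson{S(\beta)\subset B}\,p_{\beta,1}/Z_{\beta,1}=\delta_B$. Here $\beta=\multi{0}$ is allowed, since $(\multi{0},1)\neq\multi{0}$.

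The main point to check is that specialization and differentiation commute with the formal exponential. This holds because, for each multidegree in the $x_{i,j}$, only finitely many terms contribute, so all identities can be verified degree by degree in the multisymmetric grading. We also need that $\Psi_B(z)$ is a well-defined formal series in $z$ with coefficients in the completed algebra. This is clear because the coefficient of $z^k$ has $x$-degree in the last column equal to $k$, and each homogeneous component involves only finitely many $p_\alpha$.
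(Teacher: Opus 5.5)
Your proof is correct and follows the same route as the paper. Both specialize $\sigma(\multi{t})$ at $t_i=[i\in B]$ for $i<m$ and $t_m=z$, which writes $\KK_B(z)$ as $\exp\bigl(\Psi_B(z)\bigr)$; differentiating at $z=0$ then gives $\KK_B(0)\cdot\Psi_B'(0)=\sigma_B\cdot\delta_B$. Your remark about checking the identities degree by degree in the completed algebra makes explicit a formal point the paper leaves implicit.
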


\begin{proof}
  We observe that $\KK_B(z)$ is obtained from $\sigma(\multi{t})$ by replacing $t_i$ with $z$ if $i=m$ , with $1$  if $i\in B$ and with $0$ otherwise.
  By applying this specialization to 
  \[
  \sigma(\multi{t}) =\exp\left(\sum_{\alpha\neq \multi{0}} \frac{p_{\alpha}}{Z_\alpha} \multi{t}^\alpha\right),
  \]
we get
\[
\KK_B(z)=
\exp\left(\sum_{(\beta, k) \neq (\multi{0}, 0)} \iverson{S(\beta)\subset B} \frac{p_{\beta,k}}{Z_{\beta, k}} z^k\right)
\]
that we can now differentiate with respect to $z$ and evaluate at $z=0$, this gives the lemma. 
\end{proof}
We have now
\begin{align*}
  L_{m,n}
  &=
  (-1)^{mn-n} \sum_\nu  (-1)^{\|\nu \|} \binom{n}{\nu}
\scalar{E^n} {
  \prod_{B \subset [m-1]}( \sigma_B)^{\nu_B}  \prod_{B\subset [m-1]} (\delta_B)^{\nu_B}
}
\\
&=
 (-1)^{mn-n} \sum_\nu  (-1)^{\|\nu \|} \binom{n}{\nu}
\scalar{\left(\prod_{B\subset [m-1]} (\delta_B)^{\nu_B}\right)^\perp \left(E^n\right)} {
  \prod_{B \subset [m-1]}( \sigma_B)^{\nu_B}  
}
\end{align*}

Note that $\delta_B^\perp$ acts on $\FA$ as the first-order differential operator $\sum_{A} [A \subset B] \frac{\partial \phantom{\partial A\cup \set{m}}}{\partial A\cup \set{m}}$, and the formula requires now the evaluation of a composition of  $n$ such differential operators on $E^n$. But, since each term in $E$ depends on only one $A$ containing $m$, in degree $1$, the polynomial  $\delta_B^\perp (E)$ is independent from all variables $A$ with $m\in A$, and thus, $\delta_C^\perp \circ \delta_B^\perp(E)$ is always $0$.  This allows to establish, by induction, that for all $k\le n$, and any
sequence of $n$ operators $\delta_{B_1}^\perp$, $\delta_{B_2}^\perp$, \ldots ,  $\delta_{B_n}^\perp$,
\[
\delta_{B_k}^\perp \circ \delta_{B_{k-1}}^\perp \circ \cdots \circ \delta_{B_1}^\perp (E^n)
=
(n)_k\cdot E^{n-k} \cdot \delta_{B_k}^\perp (E) \cdot \delta_{B_{k-1}}^\perp(E) \cdot \cdots \cdot \delta_{B_1}^\perp (E). 
\]
In particular, for $k=n$,  
\[
\left(\prod_{B\subset [m-1]} (\delta_B)^{\nu_B}\right)^\perp \left(E^n\right)
=
n! \prod_{B\subset [m-1]} (\delta_B^\perp(E))^{\nu_B} .
\]
Thus,
\[
\frac{L_{m,n}}{n!}
= (-1)^{mn-n} \sum_\nu  (-1)^{\|\nu \|} \binom{n}{\nu}
\scalar{  \prod_{B\subset [m-1]} \left(\delta_B^\perp(E) \right)^{\nu_B} }
       {
         \prod_{B\subset [m-1]} (\sigma_B)^{\nu_B}.
         }
\]
It remains to determine $\delta_B^\perp(E)$.

\begin{lemma}\label{killer_B perp}
  \[
  \delta_B^\perp(e([m])) = (\sigma_B^\perp)^{-1}(e([m-1]))
  \]
\end{lemma}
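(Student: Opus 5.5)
The plan is to compute both sides explicitly as elements of $\FA$ and show that each equals
\[
\sum_{A\subset B} (-1)^{\#A}\,(\#A)!\; e([m-1]\setminus A).
\]

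\emph{Left-hand side.} First I make precise the action of $\delta_B^\perp$.
\begin{itemize}
\item From the identity $(p_\alpha/Z_\alpha)^\perp=\partial/\partial p_\alpha$ (proof of Proposition \ref{sigma t}), $\delta_B^\perp=\sum_{\beta:\,S(\beta)\subset B}\partial/\partial p_{\beta,1}$.
\item By Theorem \ref{Rosas}, $e([m])$ is a polynomial in the $p_{\chi(C)}$ only. So only the terms with $\beta=\chi(A)$, $A\subset B$ (including $A=\emptyset$), act nontrivially, and $\delta_B^\perp$ acts as $\sum_{A\subset B}\partial/\partial(A\cup\set{m})$.
\end{itemize}
Now write $e([m])=(-1)^m\sum_{\pi\vDash[m]}\prod_{C\in\pi}(-\tau_C C)$. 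Each $\pi$ has a unique block $C=A\cup\set{m}$ containing $m$, and this variable appears in degree one. Differentiating therefore gives:
\begin{itemize}
\item a factor $-\tau_{A\cup\set{m}}=-(\#A)!$ when $A\subset B$, and $0$ otherwise;
\item times $\sum_{\rho\vDash[m-1]\setminus A}\prod_{D\in\rho}(-\norma{D})$.
\end{itemize}
Rosas' formula applied to $[m-1]\setminus A$ identifies the last sum as $(-1)^{m-1-\#A}e([m-1]\setminus A)$. Tracking signs, $\delta_B^\perp(e([m]))$ equals the displayed sum.

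\emph{Right-hand side.} By Corollary \ref{sigma B}, $\sigma_B^\perp$ is the algebra morphism $p_\alpha\mapsto p_\alpha+[S(\alpha)\subset B]$. Hence $(\sigma_B^\perp)^{-1}$ is the morphism $p_\alpha\mapsto p_\alpha-[S(\alpha)\subset B]$.

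Rather than substituting in Rosas' expansion directly, I apply this morphism to the generating series
\[
\sum_\alpha e_\alpha\multi{t}^\alpha=\exp\Bigl(-\sum_{\alpha\neq\multi{0}}\tfrac{p_\alpha}{Z_\alpha}(-\multi{t})^\alpha\Bigr).
\]
The morphism multiplies this series by
\[
\exp\Bigl(\sum_{\alpha\neq \multi{0},\,S(\alpha)\subset B}\tfrac{(-\multi{t})^\alpha}{Z_\alpha}\Bigr).
\]
To evaluate this factor, use the $h$-generating identity specialized to a single row with all $x_{1,j}=1$, i.e. $\exp\bigl(\sum_{\alpha\neq\multi{0}}\multi{s}^\alpha/Z_\alpha\bigr)=1/(1-\sum_j s_j)$. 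With $s_i=-t_i$ for $i\in B$ and $0$ otherwise, the factor equals
\[
\frac{1}{1+\sum_{i\in B}t_i}=\sum_k(-1)^k\Bigl(\sum_{i\in B}t_i\Bigr)^k.
\]

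Next I extract the coefficient of the squarefree monomial $\prod_{j\in[m-1]}t_j$ from the product. It is $e([m-1])$ on the original series, and on the transformed series it is $(\sigma_B^\perp)^{-1}(e([m-1]))$. From $(\sum_{i\in B}t_i)^k$ the squarefree part is $k!\sum_{A\subset B,\,\#A=k}\prod_{i\in A}t_i$. The complementary part from $\sum e_\alpha\multi{t}^\alpha$ is $e([m-1]\setminus A)$. This reproduces exactly the displayed sum, which proves the lemma.

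The main obstacle is bookkeeping rather than any conceptual step. One must make sure $\delta_B^\perp$ really reduces to the stated first-order operator on $\FA$, including the block $\set{m}$ itself and the normalization $\tau_{A\cup\set{m}}=(\#A)!$. One must also get the signs consistent between Rosas' formula and the generating-series computation. I would check the final identity on $m=2$ as a sanity test.
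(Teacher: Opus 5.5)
Your proof is correct, but it takes a different route from the paper. The paper never evaluates either side. It uses the factorization $K_B=\sigma_B\delta_B$ (Lemma \ref{decomposition}) to get $\sigma_B^\perp\circ\delta_B^\perp=K_B^\perp$, which reduces the lemma to $K_B^\perp(e([m]))=e([m-1])$. It then obtains that identity by realizing $\KK_B(z)^\perp$ as a ``sum of alphabets'' with the one-row specialization $\YY$ ($x_{1,m}=z$, $x_{1,j}=1$ for $j\in B$). This gives $\KK_B(z)^\perp(e_\alpha)=e_\alpha+\sum_{j\in B}e_{\alpha-\chi_j}+z\,e_{\alpha-\chi_m}$, and differentiating at $z=0$ finishes.

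You never use $K_B$. You evaluate $\delta_B^\perp$ directly as the first-order operator $\sum_{A\subset B}\partial/\partial(A\cup\set{m})$ on Rosas' expansion. You evaluate $(\sigma_B^\perp)^{-1}$ as removal of a one-row alphabet, i.e. multiplying the $e$-generating series by $1/(1+\sum_{i\in B}t_i)$. Both sides then land on $\sum_{A\subset B}(-1)^{\#A}(\#A)!\,e([m-1]\setminus A)$. The signs, the normalization $\tau_{A\cup\set{m}}=(\#A)!$, and the boundary cases $A=\emptyset$ (the block $\set{m}$) and $A=[m-1]$ (with $e(\emptyset)=1$) all work out.

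The paper's argument is shorter, avoids sign bookkeeping, and proves the more general fact $K_B^\perp(e_\alpha)=e_{\alpha-\chi_m}$ for every $\alpha$. Yours is more hands-on and needs neither Lemma \ref{decomposition} nor the $z$-deformation. It also gives, as a by-product, an explicit expansion of both sides in terms of the $e(C)$ with $C\subset[m-1]$.
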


\begin{proof}
  Since $\sigma_B^\perp \circ \delta_B^\perp=K_B^\perp$, this amounts to show that $K_B^\perp (e([m])) = e([m-1])$.

  Given two morphisms of algebras $\XX$ and $\YY$ from the algebra of multisymmetric functions to another algebra, define their sum $\XX +_p \YY$ as the unique morphism of algebras fulfilling, $(\XX +_p \YY)(p_\alpha) = \XX(p_\alpha) + \YY(p_\alpha)$  for any power sum $p_\alpha$ (we are generalizing here to multisymmetric functions the classical \emph{sum of alphabets} of symmetric functions).
  Then, it is easy to see that for any elementary function $e_\alpha$,
  \[
  (\XX +_p \YY)(e_\alpha) = \sum_{\beta, \gamma\atop \beta+\gamma=\alpha} \XX(e_\beta) \YY(e_\gamma).
  \]
  Apply this for $\XX$ equal to the identity, and $\YY$ the morphism obtained by replacing $x_{1,m}$ with $z$,  $x_{1,j}$ with $1$ if $j\in B$ and all other $x_{i,j}$ with $0$. Then,  $\XX+_p\YY=\KK_B(z)^\perp$. Therefore,
  \[
  \KK_B(z)^\perp(e_\alpha)
  =
  \sum_{\beta, \gamma\atop \beta+\gamma=\alpha} e_\beta \YY(e_\gamma).
  \]
  It is easy to see (from instance, applying $\YY$ to the generating series of the $e_\alpha$) that $\YY(e_\beta)$ is $z$ if $\beta=\chi_m$, $1$ if $\beta$ is the zero vector or $\beta$ is $\chi_j$ for some $j\in B$, and $0$ otherwise. Therefore,
  \[
  \KK_B(z)^\perp(e_\alpha) = e_\alpha + \sum_{j\in B} e_{\alpha-\chi_j}  +  e_{\alpha-\chi_m} \cdot z.
  \]
  where $e_\alpha$ stands for $0$ when $\alpha$ has some negative component.
  Using $K_B=\KK_B'(0)$, we get $K_B^\perp(e_\alpha) = e_{\alpha-\chi_m}$. In particular, $K_B^\perp(e([m]))=e([m-1])$.
\end{proof}  
We can now finish the proof of Doyle's  Second Formula. For each $n$-multiset $\nu$ of subsets of $[m]$, we have
\begin{align*}
& \scalar{  \prod_{B\subset [m-1]} \left(\delta_B^\perp(E) \right)^{\nu_B} }
       {
         \prod_{B\subset [m-1]} (\sigma_B)^{\nu_B}
       }
       \\
       =&
\scalar{  \prod_{B\subset [m-1]} \left((\sigma_B^{-1})^{\perp}(e([m-1])) \right)^{\nu_B} }
       {
         \prod_{B\subset [m-1]} (\sigma_B)^{\nu_B}
       }\\
       =&
\scalar{  \prod_{B\subset [m-1]} \left(e([m-1])_{|A \leftarrow A - [A \subset B]} \right)^{\nu_B} }
       {
         \prod_{B\subset [m-1]} (\sigma_B)^{\nu_B}
       }\\
       =&
  \prod_{B\subset [m-1]} \left(e([m-1])_{|A \leftarrow \nu^+_A - [A \subset B]} \right)^{\nu_B} .
\end{align*}


\end{document}